\newtheorem{thm}{Theorem}[section]
\newaliascnt{prop}{thm}
\newtheorem{prop}[prop]{Proposition}
\newaliascnt{lem}{thm}
\newtheorem{lem}[lem]{Lemma}
\newaliascnt{cor}{thm}
\newtheorem{cor}[cor]{Corollary}
\theoremstyle{definition}
\newaliascnt{definition}{thm}
\newtheorem{definition}[definition]{Definition}
\newaliascnt{remark}{thm}
\newtheorem{remark}[remark]{Remark}
\newaliascnt{ex}{thm}
\newtheorem{ex}[ex]{Example}
\newaliascnt{qn}{thm}
\newtheorem{qn}[qn]{Question}
\newaliascnt{pb}{thm}
\newaliascnt{conj}{thm}
\newtheorem{conj}[conj]{Conjecture}
\numberwithin{equation}{section}
\DeclareMathOperator{\im}{im} 
\DeclareMathOperator{\spec}{Spec} 
\newcommand{\iso}{\cong}
\newcommand{\farg}{-} 
\newcommand{\id}{\mathrm{id}}
\newcommand{\comp}{\circ} 
\newcommand{\mor}[1]{\xrightarrow{#1}}
\newcommand{\mono}{\hookrightarrow} 
\newcommand{\epi}{\twoheadrightarrow} 
\newcommand{\isomor}{\mor{\sim}} 
\newcommand{\card}[1]{\lvert#1\rvert} 
\newcommand{\rest}[1]{|_{#1}} 
\newcommand{\K}{\Bbbk} 
\newcommand{\cat}[1]{{\mathbf{#1}}} 
\newcommand{\opp}{^{\circ}} 
\newcommand{\s}[1]{\mathcal{#1}} 
\newcommand{\so}{\s{O}} 
\newcommand{\Ext}{\mathrm{Ext}}
\newcommand{\Iso}{\mathrm{Isom}}
\newcommand{\Br}{\mathrm{Br}}
\newcommand{\dg}{\mathrm{dg}}
\newcommand{\sh}[2][1]{#2[#1]} 
\newcommand{\FM}[2][]{\Phi^{#1}_{#2}} 
\newcommand{\D}[1][]{\mathrm{D}^{#1}} 
\newcommand{\Db}{\D[b]} 
\newcommand{\Dm}{\D[\text{-}]} 
\newcommand{\Dp}[1][]{\cat{Perf}_{#1}} 
\newcommand{\Dq}{\D_{\mathrm{qc}}} 
\newcommand{\Ddg}{\D_{\dg}} 
\newcommand{\Ddgb}{\D[b]_{\dg}} 
\newcommand{\dgD}{\mathcal{D}} 
\newcommand{\Perf}{\mathrm{Perf}^{\,\dg}} 
\newcommand{\rd}{\mathbf{R}} 
\newcommand{\ld}{\mathbf{L}} 
\newcommand{\lotimes}{\overset{\ld}{\otimes}} 
\newcommand{\fun}[1]{\mathsf{#1}} 
\newcommand{\Mod}[1]{\mathrm{Mod}(#1)} 
\newcommand{\dgMod}[1]{\mathrm{dgMod}(#1)}
\newcommand{\SF}[1]{\mathrm{SF}(#1)} 
\newcommand{\Coh}{\cat{Coh}}
\newcommand{\Qcoh}{\cat{Qcoh}}
\newcommand{\lto}{\longrightarrow}
\newcommand{\colim}[1]{\underset{#1}{\mathrm{colim}}\,}
\newcommand{\ZZ}{\mathbb{Z}}
\newcommand{\CC}{\mathbb{C}}
\newcommand{\FF}{\mathbb{F}}
\newcommand{\PP}{\mathbb{P}}
\newcommand{\Ind}{\fun{Ind}}
\newcommand{\Res}{\fun{Res}}
\newcommand{\Hqe}{\cat{Hqe}} 
\newcommand{\Plus}{\coprod}
\newcommand{\cA}{\cat{A}}
\newcommand{\cB}{\cat{B}}
\newcommand{\cC}{\cat{C}}
\newcommand{\cD}{\cat{D}}
\newcommand{\cF}{\cat{F}}
\newcommand{\cG}{\cat{G}}
\newcommand{\cL}{\cat{L}}
\newcommand{\cN}{\cat{N}}
\newcommand{\cP}{\cat{P}}
\newcommand{\cR}{\cat{R}}
\newcommand{\cS}{\cat{S}}
\newcommand{\cT}{\cat{T}}
\newcommand{\fE}{\fun{E}}
\newcommand{\fF}{\fun{F}}
\newcommand{\fG}{\fun{G}}
\newcommand{\fH}{\fun{H}}
\newcommand{\fI}{\fun{I}}
\newcommand{\fL}{\fun{L}}
\newcommand{\fM}{\fun{M}}
\newcommand{\fPo}{\fun{Y}'}
\newcommand{\ko}{\mathcal{O}}
\newcommand{\fQ}{\fun{Q}}
\newcommand{\Ho}{\mathrm{H}^0} 
\newcommand{\Zo}{\mathrm{Z}^0} 
\newcommand{\Yon}[1][\cA]{\fun{Y}^{#1}} 
\newcommand{\dgYon}[1][\cA]{\fun{Y}^{#1}_{\dg}} 
\newcommand{\Ac}{\mathrm{Ac}} 
\newcommand{\Acdg}{\mathrm{Ac}_{\dg}} 
\newcommand{\Acdgb}{\mathrm{Ac}_{\dg}^b} 
\newcommand{\St}[1]{\cat{S}(#1)} 
\newcommand{\ho}[1]{\mathrm{Ho}(#1)} 
\newcommand{\ep}{\varepsilon}
\newcommand{\cTr}{\cT_p}
\newcommand{\cTrc}{\cTr'}
\newcommand{\Cc}{\mathrm{C}} 
\newcommand{\Cdg}{\Cc_{\dg}} 
\newcommand{\Cdgb}{\Cc_{\dg}^b} 
\newcommand{\Cdgai}{\Cdg^{\mathrm{ai}}} 
\newcommand{\Kc}{\mathrm{K}} 
\newcommand{\Kai}{\Kc^{\mathrm{ai}}} 
\newcommand{\I}{\mathcal{I}} 
\newcommand{\X}{\mathcal{X}} 
\newcommand{\Y}{\mathcal{Y}} 
\newcommand{\diag}{\Delta} 
\newcommand{\adj}[4]{#1\colon#3\leftrightarrows#4\colon#2} 
\newcommand{\Spe}{\cat{Spectra}} 
\newcommand{\ExFun}{\cat{ExFun}} 
\newcommand{\rqr}[1]{\mathrm{h\text{-}proj}(#1)^{\mathrm{rqr}}}
\newcommand{\dgFun}{\underline{Hom}} 
\newcommand{\IHom}{\rd\dgFun}
\newcommand{\essim}[1]{\overline{#1}} 
\newcommand{\hproj}[1]{\mathrm{h\text{-}proj}(#1)} 
\newcommand{\hproja}[1]{\mathrm{h\text{-}proj}_\alpha(#1)} 
\newcommand{\dfun}[1]{\Phi^{\dg}_{#1}} 
\newcommand{\HH}{\mathrm{H\!H}}
\newcommand{\dgCat}{\cat{dgCat}}
\newcommand{\Inj}{\mathrm{Inj}} 
\newcommand{\Injdg}{\cat{Inj}} 
\newcommand{\Sh}{\cat{Sh}} 
\newcommand{\Fl}{\mathrm{Fl}}
\newcommand{\GL}{\mathrm{GL}}
\begin{document}

	\title[A tour about existence and uniqueness of dg enhancements and lifts]{A tour about existence and uniqueness of dg enhancements and lifts}

	\author{Alberto Canonaco and Paolo Stellari}

	\address{A.C.: Dipartimento di Matematica ``F. Casorati'', Universit{\`a}
	degli Studi di Pavia, Via Ferrata 5, 27100 Pavia, Italy}
	\email{alberto.canonaco@unipv.it}

	\address{P.S.: Dipartimento di Matematica ``F.
	Enriques'', Universit{\`a} degli Studi di Milano, Via Cesare Saldini
	50, 20133 Milano, Italy}
	\email{paolo.stellari@unimi.it}
    \urladdr{\url{http://users.unimi.it/stellari}}
	
	\thanks{A.~C.~ was partially supported by the national research project
	  ``Spazi di Moduli e Teoria di Lie'' (PRIN 2012).
	P.~S.~ is partially supported by the grant FIRB 2012 ``Moduli Spaces and Their Applications''.}

	\keywords{Dg categories, dg enhancements, triangulated categories}

	\subjclass[2010]{14F05, 18E10, 18E30}

\begin{abstract}
This paper surveys the recent advances concerning the relations between triangulated (or derived) categories and their dg enhancements. We explain when some interesting triangulated categories arising in algebraic geometry have a unique dg enhancement. This is the case, for example, for the unbounded derived category of quasi-coherent sheaves on an algebraic stack or for its full triangulated subcategory of perfect complexes. Moreover we give an account of the recent results about the possibility to lift exact functors between the bounded derived categories of coherent sheaves on smooth schemes to dg (quasi-)functors.
\end{abstract}

\maketitle

\setcounter{tocdepth}{1}
\tableofcontents

\section*{Introduction}

There are certainly many fruitful and interesting ways to enhance triangulated categories to higher categorical structures: differential graded (dg) categories, $A_\infty$-categories or stable $(\infty,1)$-categories, just to mention some. To some extent and in some appropriate sense, they are equivalent. We will not explore this, but it is certainly clear that as soon as the passage from triangulated to higher categories is achieved, then everything becomes very natural.

Indeed, triangulated and derived categories have acquired a growing relevance in the context of algebraic geometry. Their applications to the study of the geometry of moduli spaces are well consolidated, by now. Nonetheless, some aspects of their general theory look very unnatural. A well-known example is the so called `non-functoriality of the cone'. This problem disappears when one looks, for example, at (pretriangulated) dg categories: all (closed degree zero) morphisms have functorial cones. 

In this survey, we focus on dg categories and, more specifically, on some of their geometric incarnations. The kind of questions we are interested in have a foundational flavor and can be roughly summarized as follows:

\medskip
\centerline{\it When can a triangulated category be enhanced by a dg category?}

\centerline{\it Is a dg enhancement unique?}
\medskip

\noindent Clearly, the same kind of questions can be raised for exact functors between triangulated categories:

\medskip
\centerline{\it When can an exact functor be lifted to a dg quasi-functor?}

\centerline{\it Is a lift unique?}
\medskip

These problems become particularly interesting when the triangulated categories and the exact functors in question are of geometric nature. For example, one could consider the unbounded derived category of quasi-coherent sheaves $\D(\Qcoh(X))$ or the bounded derived category of coherent sheaves $\Db(X)$ or the category of perfect complexes $\Dp(X)$ on a scheme $X$ and exact functors between them.

In this setting the questions above have been around for a long while in the form of (folklore) conjectures. But only recently a satisfactory and (almost) complete set of answers has been provided. This is the reason why we believe that this is a good moment to collect and explain all the known results in this field. Let us now be a bit more precise.
\begin{itemize}
\smallskip
\item The triangulated categories $\D(\Qcoh(X))$, $\Db(X)$ and $\Dp(X)$, for any scheme $X$, have dg enhancements (see \autoref{subsect:exenh} and \autoref{subsect:prelfun}).
\end{itemize}
\smallskip
To be honest this is well-known for a long while and it is rather clear that finding geometric triangulated categories without dg enhancements needs inspiration from algebraic topology rather than from algebraic geometry (see \autoref{subsect:nonexistence}).

That the second part of the first question should have a positive answer was conjectured in \cite{BLL} (even in a stronger form). The first striking results in this direction are contained in the beautiful paper \cite{LO} by Lunts and Orlov. Here we discuss the vast generalization of their results contained in \cite{CS6} but the intellectual debt to the seminal paper by Lunts and Orlov has to be made clear at the very beginning. The results we want to discuss are the following:
\begin{itemize}
\smallskip
\item The triangulated categories $\D(\Qcoh(X))$, $\Db(X)$ and $\Dp(X)$ have unique dg enhancement at various levels of generality (see \autoref{subsect:applicunbounded}, \autoref{rmk:coh} and \autoref{cor:geocomp} respectively).
\end{itemize}
\smallskip
Nevertheless, there are categories with non-unique dg enhancements (see \autoref{subsect:nonuniqueness}).

\medskip

Now consider the case of an exact functor $\fF\colon\Db(X_1)\to\Db(X_2)$, where $X_i$ is, for example, a smooth projective scheme over a field. Again, the existence of a dg lift of $\fF$ was predicted in \cite{BLL,Or}. But here the situation becomes more sad. Indeed, in this case, the dg lift of $\fF$ may not exist (see \autoref{subsect:funcounter}) and may not be unique (see \autoref{subsect:funtria}). The beauty of this side of the story is that the property of being dg liftable does not depend on the choices of the dg enhancements (thanks to the uniqueness results mentioned above) and coincides with the fact that the exact functor is of Fourier--Mukai type. The precise definitions are given in \autoref{subsect:funtria} but the reader with some familiarity with Hodge theory should think of Fourier--Mukai functors as categorifications of the usual notion of a Hodge theoretic correspondence. Fourier--Mukai functors are ubiquitous in algebraic geometry and essentially all exact functors that an algebraic geometer encounters in his life are of this type. Nonetheless the recent counterexamples to the belief that all exact functors $\fF$ as above should be of this form raise a warning.

These facts make it clear that the relation between derived categories and their dg enhancements is quite delicate and cannot be superficially analyzed.

Parts of the considerations about Fourier--Mukai functors discussed in this paper have been already explained in \cite{CS3}. In a sense, the present paper should be thought of as a companion and an update of \cite{CS3}.

\medskip

Although the main problems of geometric nature have been solved by now, we believe that there are still very interesting and open problems that deserve attention. They are spread out all along this survey in the form of open questions and we hope that they may help stimulate further work on the subject.

\subsection*{Geometric motivation}

One of the goals of this survey is to try to attract the attention of algebraic geometers and to convince them that the issues we discuss here are of some interest from their viewpoint. Thus we feel some pressure to discuss at least one geometric application.

The homological version of the so called \emph{Mirror Symmetry Conjecture} by Kontsevich \cite{Ko} predicts that there should be an $A_\infty$-equivalence between a dg enhancement of $\Db(X)$, for $X$ a smooth projective Calabi--Yau threefold, and the Fukaya category of the mirror $Y$ of $X$. The Fukaya category is actually an $A_\infty$-category but its homotopy category is an ordinary triangulated category. Roughly speaking, the objects of this $A_\infty$-category are Lagrangian submanifolds of $Y$.

Now, one could ask the following natural questions:
\begin{itemize}
\item[(1)] Do we have to care about a specific choice of a dg enhancement of $\Db(X)$?
\item[(2)] Is the $A_{\infty}$-nature of the quasi-equivalence relevant? Or does only the induced exact equivalence between the (triangulated) homotopy categories matter?
\end{itemize}
The results concerning the questions at the beginning of this introduction and which are discussed in this paper suggest that, a posteriori and in theory, only the triangulated side matters. On the other hand, in practice, the situation is not so simple: in all the examples, the construction of the exact equivalence exploits the dg and the $A_{\infty}$-structures.

To get another example of why one should care about the uniqueness of dg enhancements, let us go back to the problem of showing that a Fourier--Mukai functor can be lifted to a dg quasi-functor. These triangulated functors are compositions of three exact functors (pull-back, push-forward and tensor product). Each of them needs to be derived using appropriate (and a priori distinct) dg enhancements. Only quite recently, Schn\"urer constructed in \cite{Sn} dg resolution functors for suitable dg categories over a field that allow to lift the three exact functors mentioned above to the same type of dg enhancement. On the other hand, the uniqueness of dg enhancements makes the problem well-posed in complete generality.

\subsection*{Plan of the paper}

The paper is organized as follows. In \autoref{sect:dgenhancements} we summarize some basic material about dg categories, dg functors, localizations of the category of dg categories and enhancements. The emphasis is on the geometric examples.

\autoref{sect:wellgen} is a short summary about well generated triangulated categories. Such a theory plays a crucial role in the proof of uniqueness of dg enhancements for the unbounded derived category of a Grothendieck category (this includes the case of $\D(\Qcoh(X))$ for any scheme $X$). This will be extensively discussed in \autoref{sect:uniqcat} but before indulging in this analysis we show that there are triangulated categories without dg enhancements or with more than one up to quasi-equivalence (see \autoref{sect:badcat}).

The uniqueness of the dg enhancements for the full subcategory of compact objects in the derived category of a Grothendieck category is explained in \autoref{sect:uniqcatcomp}. From this, we deduce all the results concerning the triangulated categories $\Db(X)$ and $\Dp(X)$.

Finally, in \autoref{sect:functors}, we study the problem of lifting exact functors by linking it to the quest of a characterization of such functors in terms of Fourier--Mukai functors.

\subsection*{Notation}

We assume that a universe containing an infinite set is fixed. Several definitions concerning dg categories need special care because they may, in principle, require a change of universe. All possible subtle logical issues in this sense can be overcome in view of \cite[Appendix A]{LO}. The careful reader should have a look at it. After these warnings and to simplify the notation, we will not mention explicitly the universe we are working in any further in the paper, as it should be clear from the context. The members of this universe will be called small sets. For example, when we speak about small coproducts in a category, we mean coproducts indexed by a small set. If not stated otherwise, we always assume that each Hom-space in a category forms a small set. A category is called \emph{small} if the isomorphism classes of its objects form a small set.

Given a category $\cC$ and two objects $C_1$ and $C_2$ in $\cC$, we denote by $\cC(C_1,C_2)$ the Hom-space between $C_1$ and $C_2$. If $\fF\colon\cC\to\cD$ is a functor and $C_1$ and $C_2$ are objects of $\cC$, then we denote by $\fF_{C_1,C_2}$ the induced map $\cC(C_1,C_2)\to\cD(\fF(C_1),\fF(C_2))$.

Unless otherwise stated, all categories and functors are assumed to be $\K$-linear, for a fixed commutative ring $\K$. By a $\K$-linear category we mean a category whose Hom-spaces are $\K$-modules and such that the compositions are $\K$-bilinear, not assuming that finite coproducts exist. Moreover, schemes and algebraic stacks are assumed to be defined over $\K$.

If $\cA$ is a small ($\K$-linear) category, we denote by $\Mod{\cA}$ the (abelian) category of ($\K$-linear) functors $\cA\opp\to\Mod{\K}$, where $\cA\opp$ is the opposite category of $\cA$ and $\Mod{\K}$ is the category of $\K$-modules. The Yoneda embedding of $\cA$ is the functor $\Yon\colon\cA\to\Mod{\cA}$ defined on objects by $A\mapsto\cA(\farg,A)$.

If $\cT$ is a triangulated category and $\cS$ a full triangulated subcategory of $\cT$, we denote by $\cT/\cS$ the Verdier quotient of $\cT$ by $\cS$. In general, $\cT/\cS$ is not a category according to our convention (namely, the Hom-spaces in $\cT/\cS$ need not be small sets), but it is in many common situations, for instance when $\cT$ is small. 

For a complex of objects in an abelian category
\[
C=\{\cdots\to C^{j-1}\mor{d^{j-1}}C^j\mor{d^j}C^{j+1}\to\cdots\},
\]
and for every integer $n$, we define
\[
\tau_{\le n}(C):=\{\cdots\to C^j\mor{d^j}C^{j+1}\to\cdots\to C^{n-1}\to\ker d^n\to0\to\cdots\}.
\]

\section{Dg categories and dg enhancements}\label{sect:dgenhancements}

This section provides a quick introduction to dg categories and to their geometric incarnations. The expert reader can certainly skip this section.

\subsection{Generalities on dg categories}\label{subsect:gendg}

Very well-written surveys about dg categories are \cite{K} and \cite{TLec}. Notice that `dg' is an acronym for differential graded.

\subsubsection*{Dg categories, dg functors and quasi-functors}

Starting from scratch, let us recall the following.

\begin{definition}\label{def:dgcat}
A \emph{dg category} is a category $\cC$
such that, for all $A,B,C$ in $\cC$, the morphism spaces
$\cC\left(A,B\right)$ are $\ZZ$-graded $\K$-modules with a differential
$d\colon\cC\left(A,B\right)\to\cC\left(A,B\right)$ of degree $1$ and the composition maps $\cC(B,C)\otimes_{\K}\cC(A,B)\to\cC(A,C)$, $g\otimes f\mapsto g\comp f$,
are morphisms of complexes.
\end{definition}

More explicitly, if $f$ and $g$ as above are homogeneous, then $\deg(g\comp f)=\deg(g)+\deg(f)$ and $d(g\comp f)=d(g)\comp f+(-1)^{\deg(g)}g\comp d(f)$. Moreover,
it follows from the definition that the identity of each object is a closed morphism of degree $0$.

\begin{ex}\label{ex:dgcat1}
(i) Any ($\K$-linear) category has a trivial structure of dg category,
with morphism spaces concentrated in degree $0$.

(ii) A dg category with one object is just a dg algebra.

(iii) For a dg category $\cC$, one defines the opposite dg category $\cC\opp$ with the same objects as $\cC$ while $\cC\opp(A,B):=\cC(B,A)$. Observe that, for homogeneous elements, the composition $f\comp g$ in $\cC\opp$ differs from the composition $g\comp f$ in $\cC$ by a factor $(-1)^{\deg(f)\deg(g)}$.

(iv) If $\cC$ is a dg category and $\cB$ is a full subcategory of $\cC$ (regarded as an ordinary category), then $\cB$ is in a natural way a dg category. Hence we will say that $\cB$ is a \emph{full dg subcategory} of $\cC$.

(v) Following \cite{Dr}, given a (small) dg category $\cC$ and a full dg
subcategory $\cB$ of $\cC$, one can define the quotient
$\cC/\cB$ which is again a dg category. The construction can be roughly summarized as follows. Given $B\in\cB$, we formally add a morphism $f_B\colon B\to B$ of degree $-1$ and we define $d(f_B)=\id_B$. To make this rigorous one would need to substitute $\cC$ and $\cB$ with cofibrant replacements. Since this level of precision is not needed in this paper, we ignore this issue.
\end{ex}

Given a dg category $\cC$ we denote by $\Zo(\cC)$ its \emph{underlying category} and by $\Ho(\cC)$ its \emph{homotopy category}. To be precise, the objects of both categories are the same as those of $\cC$ while the morphisms from $A$ to $B$ are given, respectively, by $Z^0(\cC(A,B))$ and $H^0(\cC(A,B))$.

\begin{ex}\label{Cdg}
If $\cA$ is a ($\K$-linear) category, we denote by $\Cdg(\cA)$ the dg category of complexes in $\cA$. More precisely, its objects are complexes in $\cA$ and morphisms are given (for all $A,B$ in $\Cdg(\cA)$) by
\[
\Cdg(\cA)(A,B)^n:=\prod_{i\in\ZZ}\cA(A^i,B^{n+i})
\]
with differential defined on homogeneous elements by $d(f):=d_B\comp f-(-1)^{\deg(f)}f\comp d_A$ (the composition is the obvious one). It is then straightforward to check that $\Cc(\cA):=\Zo(\Cdg(\cA))$ is the usual category of complexes in $\cA$ and $\Kc(\cA):=\Ho(\Cdg(\cA))$ is the usual homotopy category of complexes in $\cA$.
\end{ex}

\begin{definition}
A \emph{dg functor} $\fF\colon\cC_1\to\cC_2$ between two dg categories is a functor such that the map $\fF_{A,B}$ is a morphism of complexes of $\K$-modules, for every $A,B\in\cC_1$.
\end{definition}
  
We denote by $\dgCat$ the category whose objects are small dg categories and whose morphisms are dg functors. Clearly, a dg functor $\fF\colon\cC_1\to\cC_2$ induces a functor $\Ho(\fF)\colon\Ho(\cC_1)\to\Ho(\cC_2)$.

Starting from two dg categories $\cC_1$ and $\cC_2$, one can construct two new dg categories $\cC_1\otimes\cC_2$ and $\dgFun(\cC_1,\cC_2)$ (see \cite[Section 2.3]{K}). The objects of $\cC_1\otimes\cC_2$ are pairs $(A,B)$ with $A\in\cC_1$ and $B\in\cC_2$. The morphisms are defined by
\[
\cC_1\otimes\cC_2((A_1,B_1),(A_2,B_2))=\cC_1(A_1,A_2)\otimes_{\K}\cC_2(B_1,B_2),
\]
for all $(A_i,B_i)\in\cC_1\otimes\cC_2$ and $i=1,2$. The objects of $\dgFun(\cC_1,\cC_2)$ are dg functors from $\cC_1$ to $\cC_2$ and morphisms are given by suitably defined (dg) natural transformations. Notice that here we are slightly abusing the standard terminology. Indeed, one usually calls dg natural transformations only the closed degree zero morphisms in $\dgFun(\cC_1,\cC_2)$.

Notice that the tensor product defines a symmetric monoidal structure on $\dgCat$. This means that, up to isomorphism, the tensor product is associative, commutative and $\K$ acts as the identity. Moreover, given three dg categories $\cC_1$, $\cC_2$ and $\cC_3$, there is a natural isomorphism in $\dgCat$
\begin{equation}\label{dgadj}
\dgFun(\cC_1\otimes\cC_3,\cC_2)\iso\dgFun(\cC_1,\dgFun(\cC_3,\cC_2)).  
\end{equation}
By considering the objects of the two dg categories above, one obtains a natural bijection
\[
\dgCat(\cC_1\otimes\cC_3,\cC_2)\iso\dgCat(\cC_1,\dgFun(\cC_3,\cC_2)),
\]
which proves that the symmetric monoidal structure on $\dgCat$ is closed.

\begin{definition}
A dg functor $\fF\colon\cC_1\to\cC_2$ is a \emph{quasi-equivalence}, if the maps
$\fF_{A,B}$ are quasi-isomorphisms, for every $A,B\in\cC_1$, and $\Ho(\fF)$ is an equivalence.
\end{definition}

One can consider the localization $\Hqe$ of $\dgCat$ with respect to quasi-equivalences. Given a dg functor
$\fun{F}$, we will denote by the same symbol its image in
$\Hqe$.  We will adopt a rather non-standard terminology and call \emph{quasi-functor} any morphism in $\Hqe$. Indeed, more appropriately and according to \cite{Dr,LO}, a quasi-functor would be a suitable bimodule and any morphism in $\Hqe$ would be an isomorphism class of quasi-functors. Any quasi-functor (in our sense) $\fF\colon\cC_1\to\cC_2$ induces a functor
$\Ho(\fF)\colon\Ho(\cC_1)\to\Ho(\cC_2)$, which is well defined up to isomorphism. In the rest of the paper, we will treat $\Ho(\fF)$ as an actual functor rather than as an isomorphism class of functors. We are allowed to do that because everything will be independent of the choice of the representative in the isomorphism class.

\begin{remark}\label{rmk:modelcat}
It is worth mentioning here that $\dgCat$ is in a natural way a model category (see \cite{Tab}) and $\Hqe$ is its homotopy category.

Recall that a \emph{model category} (see \cite{Ho}) is a (not necessarily $\K$-linear) category which has small limits and colimits and which is endowed with three families of morphisms, called \emph{fibrations}, \emph{cofibrations} and \emph{weak equivalences}, satisfying some axioms which we are not going to list here. The \emph{homotopy category} of a model category $\cC$ is the localization $\ho{\cC}$ of $\cC$ with respect to weak equivalences.
\end{remark}

The tensor product in $\dgCat$ can be appropriately derived (see \cite{K} or \cite{To}). We will denote by $\cC_1\lotimes\cC_2$ the derived tensor product in $\Hqe$ of the dg categories $\cC_1$ and $\cC_2$. This defines a symmetric monoidal structure on $\Hqe$.

\subsubsection*{Dg modules and their relatives}

Given a small dg category $\cC$, one can consider the dg category $\dgMod{\cC}$ of \emph{(right) $\cC$-dg modules}, which is defined as $\dgFun(\cC\opp,\Cdg(\Mod{\K}))$. A $\cC$-dg module is \emph{representable} if it is contained
in the image of the Yoneda dg functor
\[
\dgYon[\cC]\colon\cC\to\dgMod{\cC}\qquad
A\mapsto\cC\left(\farg,A\right).
\]
It is known that $\Ho(\dgMod{\cC})$ is, in a natural way, a triangulated category with small coproducts (see, for example, \cite{K}). Hence, we can give the following definition.

\begin{definition}\label{def:pretr}
A dg category $\cC$ is \emph{pretriangulated} if the essential image of the functor
\[
\Ho(\dgYon[\cC])\colon\Ho(\cC)\to\Ho(\dgMod{\cC})
\]
is a triangulated subcategory.
\end{definition}

\begin{remark}\label{pretr}
(i) As $\Ho(\dgYon[\cC])$ is fully faithful, it is clear that $\Ho(\cC)$ is a triangulated category if $\cC$ is a pretriangulated dg category. Moreover, given a quasi-functor $\fF\colon\cC_1\to\cC_2$ between two pretriangulated dg categories, the induced functor $\Ho(\fF)\colon\Ho(\cC_1)\to\Ho(\cC_2)$ is an exact functor between triangulated categories.

(ii) Let $\cC$ be a pretriangulated dg category and let $\cB$ be a full pretriangulated dg subcategory of $\cC$. Then the dg quotient $\cC/\cB$ is again pretriangulated and there is a natural exact equivalence between the Verdier quotient $\Ho(\cC)/\Ho(\cB)$ and $\Ho(\cC/\cB)$ (see \cite{Dr} or \cite[Lemma 1.5]{LO}).
\end{remark}

The full dg subcategory of $\dgMod{\cC}$ of \emph{acyclic dg modules} is denoted by $\Ac(\cC)$. Here an object $N\in\dgMod{\cC}$ is acyclic if $N(C)$ is an acyclic complex, for all $C\in\cC$. The category $\Ho(\Ac(\cC))$ is a
localizing subcategory of the homotopy category
$\Ho(\dgMod{\cC})$. Recall that, given a triangulated category $\cT$ with small coproducts, a full triangulated subcategory is \emph{localizing} if it is closed under small coproducts in $\cT$.

The \emph{derived category} of the dg
category $\cC$ is the Verdier quotient
\[
\dgD(\cC):=\Ho(\dgMod{\cC})/\Ho(\Ac(\cC)),
\]
which turns out to be a triangulated category with small coproducts. By \autoref{pretr} (ii), there is a natural exact equivalence
\begin{equation}\label{Drinfeld}
\dgD(\cC)=\Ho(\dgMod{\cC})/\Ho(\Ac(\cC))\iso\Ho(\dgMod{\cC}/\Ac(\cC)).
\end{equation}
A $\cC$-dg module $M\in\dgMod{\cC}$ is \emph{h-projective} if $H^0(\dgMod{\cC})(M,N)=0$, for all $N\in\Ac(\cC)$. We denote by $\hproj{\cC}$ the full dg subcategory of $\dgMod{\cC}$ with objects the h-projective $\cC$-dg modules. It is easy to see that $\hproj{\cC}$ contains the full dg subcategory $\essim{\cC}$ of $\dgMod{\cC}$ whose objects are those in the essential image of $\Ho(\dgYon[\cC])$. Summing up, given a dg category $\cC$, we have the following inclusions of full dg subcategories:
\[
\dgYon[\cC](\cC)\mono\essim{\cC}\mono\hproj{\cC}\mono\dgMod{\cC}.
\]

\begin{remark}\label{rmk:pretr}
(i) There is also a notion of \emph{semi-free} dg module, which we do not define here, since we are not going to use it in the paper. It can be useful to know, however, that the full dg subcategory $\SF{\cC}$ of $\dgMod{\cC}$ consisting of semi-free dg modules is contained in $\hproj{\cC}$ and the inclusion $\SF{\cC}\mono\hproj{\cC}$ is a quasi-equivalence (see, for example, \cite{Ke1}). Clearly this means that it is essentially equivalent to work with $\SF{\cC}$ or $\hproj{\cC}$. On the other hand, sometimes one can prefer to use the former, because for some computations it can be easier to deal with semi-free dg modules (this is the case in \cite{LO} and \cite{CS6}).

(ii) It is easy to see that the homotopy category $\Ho(\hproj{\cC})$ is a full triangulated subcategory of $\Ho(\dgMod{\cC})$. Moreover, by \cite{Ke1}, there is an exact equivalence of triangulated categories $\Ho(\hproj{\cC})\iso\dgD(\cC)$. We can actually be more precise about it. Indeed, the composition of natural dg functors
\[
\fH\colon\hproj{\cC}\mono\dgMod{\cC}\to\dgMod{\cC}/\Ac(\cC)
\]
is a quasi-equivalence. So, up to composing with \eqref{Drinfeld}, $\Ho(\fH)$ provides the exact equivalence $\Ho(\hproj{\cC})\iso\dgD(\cC)$ mentioned above.
\end{remark}

For two dg categories $\cC_1$ and $\cC_2$, it follows from \eqref{dgadj} that there is an isomorphism of dg categories
$\dgMod{\cC_1\opp\otimes\cC_2}\iso\dgFun(\cC_1,\dgMod{\cC_2})$, so in
particular an object $E\in\dgMod{\cC_1\opp\otimes\cC_2}$ corresponds to
a dg functor $\dfun{E}\colon\cC_1\to\dgMod{\cC_2}$. Conversely, for
every dg functor $\fF\colon\cC_1\to\dgMod{\cC_2}$ there exists a unique
$E\in\dgMod{\cC_1\opp\otimes\cC_2}$ such that $\dfun{E}=\fF$. An
object $E\in\hproj{\cC_1\opp\otimes\cC_2}$ is called \emph{right
quasi-representable} if $\dfun{E}(\cC_1)\subset\essim{\cC_2}$. The
full dg subcategory of $\hproj{\cC_1\opp\otimes\cC_2}$ consisting of all
right quasi-representable dg modules will be denoted by
$\rqr{\cC_1\opp\otimes\cC_2}$.

If we are given a dg functor  $\fF\colon\cC_1\to\cC_2$, there exist dg functors
\[
\Ind(\fF)\colon\dgMod{\cC_1}\to\dgMod{\cC_2},
\qquad\Res(\fF)\colon\dgMod{\cC_2}\to\dgMod{\cC_1}.
\]
While $\Res(\fF)$ is simply defined by $\fM\mapsto\fM\comp\fF\opp$, the reader can have a look at \cite[Sect.\ 14]{Dr} for the explicit
definition and properties of $\Ind(\fF)$. Let us just observe that $\Ind(\fF)$ preserves h-projective dg modules and
$\Ind(\fF)\colon\hproj{\cC_1}\to\hproj{\cC_2}$ is a quasi-equivalence if
$\fF\colon\cC_1\to\cC_2$ is such. Moreover, $\Ind(\fF)$ commutes with
the Yoneda embeddings, up to dg isomorphism.

\begin{ex}\label{ResSF}
Let $\cC$ be a dg category and let $\cB$ be a full dg subcategory of
$\cC$. If we denote by $\fI\colon\cB\mono\cC$ the inclusion dg functor,
then the composition of dg functors
\[
\cC\mor{\dgYon[\cC]}\dgMod{\cC}\mor{\Res(\fI)}\dgMod{\cB}\to\dgMod{\cB}/\Ac(\cB)
\]
yields, in view of \autoref{rmk:pretr} (ii), a natural quasi-functor $\cC\to\hproj{\cB}$.
\end{ex}

\subsubsection*{Dg enhancements}

Let us give now the key definition for this paper.

\begin{definition}\label{def:enhancement}
A \emph{dg enhancement} (or simply an \emph{enhancement}) of a triangulated category $\cT$ is a pair
$(\cC,\fE)$, where $\cC$ is a pretriangulated
dg category and $\fE\colon\Ho(\cC)\to\cT$ is an exact
equivalence.
\end{definition}

When the equivalence $\fE$ is not relevant, by abuse of notation, we will often simply say that $\cC$ is an enhancement of $\cT$ if there exists an exact equivalence $\Ho(\cC)\iso\cT$.

\begin{definition}
A triangulated category is \emph{algebraic} if it has an enhancement.
\end{definition}

\begin{remark}
As we will see in \autoref{algchar}, there are other equivalent definitions of algebraic triangulated categories, which are used more often in the literature.
\end{remark}

Let us discuss a few easy examples now and elaborate a bit more on the subject in \autoref{subsect:exenh}.

\begin{ex}\label{ex:derenh}
(i) By \autoref{rmk:pretr} (ii), if $\cC$ is a dg category, $\hproj{\cC}$
is an enhancement of $\dgD(\cC)$.

(ii) If $\cC$ is a pretriangulated dg category and $\cB$ is a full pretriangulated dg subcategory of $\cC$, then, by \autoref{pretr} (ii), $\cC/\cB$ is an enhancement of $\Ho(\cC)/\Ho(\cB)$.

(iii) If $\cA$ is an additive category, then $\Cdg(\cA)$ is pretriangulated and it is naturally an enhancement of the triangulated category $\Kc(\cA)$ (see \autoref{Cdg}).
\end{ex}

The key point of this paper is that, in principle, one may have `different' enhancements for the same triangulated category. Let us start describing the weakest form of uniqueness of dg enhancements.

\begin{definition}\label{def:uniqueenh}
An algebraic triangulated category $\cT$
\emph{has a unique enhancement} if, given two enhancements $(\cC,\fE)$ and $(\cC',\fE')$ of $\cT$,
there exists a quasi-functor $\fF\colon\cC\to\cC'$ such that
$\Ho(\fF)$ is an equivalence (which implies that $\fF$ is an isomorphism in $\Hqe$).
\end{definition}

In the definition, the equivalence $\fE$, which is a key ingredient in \autoref{def:enhancement}, is not relevant. Thus it makes sense to say that a triangulated category has a unique dg enhancement if all its enhancements are isomorphic in $\Hqe$. On the other hand, there are stronger versions of the notion of uniqueness of dg enhancements.

\begin{definition}\label{def:strongenhancement}
An algebraic triangulated category $\cT$ \emph{has a strongly unique} (respectively, \emph{semi-strongly unique}) \emph{enhancement} if moreover $\fF$ can be chosen so that there is an isomorphism of exact functors $\fE\iso\fE'\comp\Ho(\fF)$ (respectively, there is an isomorphism $\fE(C)\iso\fE'(\Ho(\fF)(C))$ in $\cT$, for every $C\in\cC$).
\end{definition}

\subsection{Enhancements in geometric contexts}\label{subsect:exenh}

For simplicity, we will assume that $X$ is a smooth projective scheme defined over a field $\K$. These assumptions will be weakened along the paper. We denote by $\Qcoh(X)$ and $\Coh(X)$ the abelian categories of quasi-coherent and coherent sheaves on $X$. We use the short hand notation $\Db(X)$ for $\Db(\Coh(X))$. 

\subsubsection*{Category of complexes}\label{subsubsect:compl}

Let us first describe a rather general procedure. Assume that $\cA$ is an abelian category with the additional requirement that $\D(\cA)$ is a category (notice that this is not always the case, as pointed out in \cite[Section 4.15]{K2}).

Now, one can consider the (pretriangulated) full dg subcategory $\Acdg(\cA)$ of $\Cdg(\cA)$ whose objects are the acyclic complexes. Then, in view of \autoref{ex:derenh} (ii) and (iii), the dg quotient category $\Ddg(\cA):=\Cdg(\cA)/\Acdg(\cA)$ is an enhancement of $\D(\cA)$. Clearly, one could also work with the dg categories $\Cdgb(\cA)$ and $\Acdgb(\cA)$ of bounded complexes, obtaining an enhancement $\Ddgb(\cA)$ of $\Db(\cA)$.

In conclusion, $\Ddg(\Qcoh(X))$ and $\Ddgb(\Coh(X))$, together with the corresponding natural exact equivalences with $\D(\Qcoh(X))$ and $\Db(X)$, are enhancements of the latter triangulated categories.

\subsubsection*{Injective resolutions}\label{subsubsect:inj}

On a smooth projective scheme, one can consider the dg category $\Injdg(X)$ of bounded below complexes of injective quasi-coherent sheaves on $X$ with bounded coherent cohomologies (see, for example, \cite{BLL}). One can check that $\Injdg(X)$ is pretriangulated and it is rather simple to see that there is a natural exact equivalence $\Ho(\Injdg(X))\iso\Db(X)$.

\subsubsection*{\v{C}ech resolutions}\label{subsubsect:Chech}

For $X$ as above, consider the pretriangulated dg category $\cat{C}(X)$ of bounded below complexes of quasi-coherent sheaves on $X$ with bounded and coherent cohomology. Clearly, $\Injdg(X)$ is a full dg subcategory of $\cat{C}(X)$. Let moreover $\cat{P}(X)$ be the full dg subcategory of $\cat{C}(X)$ whose objects are bounded complexes of locally free sheaves of finite rank.

We fix now a finite affine open cover $\mathcal{U}=\{U_1,\ldots U_r\}$ of $X$. Thus for any
\[
P:=\{\ldots\to 0\to P^n\to\ldots\to P^j\to\ldots\to P^{n+m}\to 0\ldots\}
\]
in $\cat{P}(X)$, with $m\geq 0$, we can take its \v{C}ech resolution $\check{C}_\mathcal{U}(P)$. Recall that this resolution is a complex of quasi-coherent sheaves which are coproducts of sheaves of the form
\[
P^j_U:=i_*i^*(P^j),
\]
where $U$ is the intersection of some of the open subsets in $\mathcal{U}$ and $i\colon U\mono X$ is the inclusion.

Define $\cat{P}(\mathcal{U})$ to be the smallest full pretriangulated subcategory of $\cat{C}(X)$ containing the \v{C}ech resolutions of all $P\in\cat{P}(X)$. Again, one can show that there is an exact equivalence $\Ho(\cat{P}(\mathcal{U}))\iso\Db(X)$ (see \cite[Lemma 5.6]{BLL}). Moreover, it is not too difficult to show that there is an isomorphism in $\Hqe$ between $\cat{P}(\mathcal{U})$ and $\Injdg(X)$.

This construction has been extended and considerably improved in \cite{LS}. The expert reader may also have a look at \cite{Sn} where the results in \cite{LS} have been strengthened.

\subsubsection*{Dolbeault enhancement}\label{subsubsect:Dol}

Let us illustrate a more analytic situation. For this, assume that $X$ is a complex manifold. We can then define a dg category $\cT_{\text{Dol}}(X)$ whose objects are the complex holomorphic vector bundles on $X$. Given two such vector bundles $E_1$ and $E_2$ we define the complex of morphism between them as the Dolbeault complex
\[
\cT_{\text{Dol}}(X)\left( E_1,E_2\right):= A^\bullet(E_1,E_2),
\]
where $A^q(E_1,E_2):=A^{0,q}(X,\mathcal{H}om(E_1,E_2))$ is the space of $(0,q)$-forms on $X$ with coefficients in the holomorphic bundle $\mathcal{H}om(E_1,E_2)$ of morphisms from $E_1$ to $E_2$. The differential
\[
\overline{\partial}\colon A^q(E_1,E_2)\to A^{q+1}(E_1,E_2)
\]
is defined by
\[
\overline{\partial}(\omega\otimes f):=\overline{\partial}(\omega)\otimes f+(-1)^q\omega\otimes\overline{\partial}(f)
\]
where $\overline{\partial}(f):=\overline{\partial}(f)_{E_2}\comp f-(f\otimes\id)\comp\overline{\partial}(f)_{E_1}$ and $\overline{\partial}_{E_i}$ is the natural operator induced by the holomorphic structure on $E_i$.

It turns out that there is a natural exact equivalence between $\Ho(\cT_{\text{Dol}}(X))$ and the exact category of holomorphic vector bundles on $X$ (see, for example, Example 9 in \cite[Section 2.3]{TLec} for more details). The reader can have a look at \autoref{subsect:prelfun} for the definition of exact category.

Of course, one would like to extract from this an enhancement of $\Db(X)$. One possibility would be to take finite locally free resolutions of coherent sheaves and try to imitate the argument above. Unfortunately, as explained in \cite{Bl}, this is problematic for several reasons. One of them is that such locally free resolutions do not always exist (see \cite{Vo}). Hence, to get an enhancement of $\Db(X)$, we need to take a slightly different perspective. Assume that $X$ is compact and let $A:=\cT_{\text{Dol}}(X)\left(\ko_X,\ko_X\right)$ be the dg algebra defined above. According to \cite[Definition 2.3.1]{Bl} one defines a dg category $\cP_A$ with the property that, by \cite[Theorem 4.1.3]{Bl}, we have a natural exact equivalence $\Ho(\cP_A)\iso\Db(X)$. It should be noted that here by $\Db(X)$ we mean the full subcategory of $\D(\Mod{\ko_X})$ consisting of complexes with bounded and coherent cohomology (which is not equivalent to $\Db(\Coh(X))$, in general).

The objects of $\cP_A$ are pairs $(E,\mathbb{E})$, where $E$ is a finitely generated and projective $\ZZ$-graded (right) module over $A^0=A^0(\ko_X,\ko_X)$ which is bounded above and below as a complex. The additional piece of data $\mathbb{E}$ is a $\CC$-linear map
\[
\mathbb{E}\colon E\otimes_{A^0} A\to E\otimes_{A^0} A
\]
of total degree $1$ such that
\[
\mathbb{E}(e\omega)=\mathbb{E}(e\otimes 1)\omega+(-1)^{\deg(e)}e\overline{\partial}(\omega)\qquad\text{and}\qquad \mathbb{E}\circ\mathbb{E}(e)=0,
\]
for all $e\in E$ and all $\omega\in A$. The reader is encouraged to have a look to \cite[Section 2.3]{Bl} for a detailed description of the morphisms in $\cP_A$.

\section{Well generated triangulated categories and localizations}\label{sect:wellgen}

In this section, we review some selected material concerning well generated categories. The focus is on Verdier quotients.

\subsection{Well generated triangulated categories}\label{subsect:introwell}

In this section we review some basic facts about well generated triangulated categories. The interested reader can find a very nice survey in \cite{K2}, where Neeman's original theory (see \cite{N2}) is revisited with the point of view of  \cite{K1}.

In this section, we assume $\cT$ to be a triangulated category with small coproducts. For a cardinal $\alpha$, an object $S$ of $\cT$ is \emph{$\alpha$-small} if every map $S\to\Plus_{i\in I}X_i$ in $\cT$ (where $I$ is a small set) factors through $\Plus_{i\in J}X_i$, for some $J\subseteq I$ with $\card{J}<\alpha$. We are then ready to state the main definition.

\begin{definition}\label{def:wellgen}
The category $\cT$ is \emph{well generated} if there exists a small set $\cS$ of objects in $\cT$ satisfying the following properties:
\begin{enumerate}
\item[(G1)]\label{G1} An object $X\in\cT$ is isomorphic to $0$, if and only if $\cT(S,X[j])=0$, for all $S\in\cS$ and all $j\in\ZZ$;
\item[(G2)]\label{G2} For every small set of maps $\{X_i\to Y_i\}_{i\in I}$ in $\cT$, the induced map $\cT(S,\Plus_iX_i)\to\cT(S,\Plus_i Y_i)$ is surjective for all $S\in\cS$, if $\cT(S,X_i)\to\cT(S, Y_i)$
is surjective, for all $i\in I$ and all $S\in\cS$;
\item[(G3)]\label{G3} There exists a regular cardinal $\alpha$ such that every object of $\cS$ is $\alpha$-small.
\end{enumerate}
\end{definition}

Recall that a cardinal $\alpha$ is called \emph{regular} if it is not the sum of fewer than $\alpha$ cardinals, all of them smaller than $\alpha$. A full triangulated subcategory of $\cT$ is \emph{$\alpha$-localizing} if it is
closed under $\alpha$-coproducts and under direct summands (the latter condition is automatic if $\alpha>\aleph_0$). By definition, an $\alpha$-coproduct is a coproduct of strictly less than $\alpha$ summands. Notice that a full triangulated subcategory of $\cT$ is localizing if and only if it is $\alpha$-localizing for every regular cardinal $\alpha$.

When the category $\cT$ is well generated and we want to put emphasis on the cardinal $\alpha$ in (G3) and on $\cS$, we say that $\cT$ is \emph{$\alpha$-well generated} by the set $\cS$. In this situation, following
\cite{K1}, we denote by $\cT^\alpha$ the smallest $\alpha$-localizing
subcategory of $\cT$ containing $\cS$. It is explained in \cite{K1,N2} that the category $\cT^\alpha$ does not depend on the choice of the set $\cS$ which well generates $\cT$.

The objects in $\cT^\alpha$ are called \emph{$\alpha$-compact}. Thus we will sometimes say that $\cT$ is
\emph{$\alpha$-compactly generated} by the set of \emph{$\alpha$-compact generators} $\cS$.
A very interesting case is when $\alpha=\aleph_0$. Indeed, with this choice, $\cT^\alpha=\cT^c$, the full
triangulated subcategory of compact objects in $\cT$. Recall that an object $C$ in $\cT$ is \emph{compact} if the functor $\cT(C,\farg)$ commutes with small coproducts. Notice that the compact objects in $\cT$ are precisely the $\aleph_0$-small ones.

It is very easy to see that $\cT$ is $\aleph_0$-compactly generated by a small set
$\cS\subseteq\cT^c$ if (G1) holds. Indeed (G2) and (G3) are automatically satisfied. In this situation, we simply say that $\cT$ is
\emph{compactly generated} by $\cS$. 

\begin{remark}\label{rmk:gen}
There is yet another notion of generation which is of interest in this paper. Indeed, given a small set $\cS$ of objects in $\cT$, we say that \emph{$\cS$ generates $\cT$} if $\cT$ is the smallest localizing subcategory of $\cT$ containing $\cS$. If $\cT$ is a well generated triangulated category, then a small set $\cS$ of objects in $\cT$ satisfies {\rm (G1)} if and only if $\cS$ generates $\cT$ (see, for example \cite[Proposition 5.1]{P}).
\end{remark}

\begin{remark}\label{rmk:G4}
Following \cite{N2}, we say that a small set of objects $\cS$ in a triangulated category $\cT$ with small coproducts satisfies (G4) if the following condition holds true.
\begin{enumerate}
\item[(G4)] For any small set of objects $\{X_i\}_{i\in I}$ in $\cT$ and any map $f\colon S\to\Plus_{i\in I} X$ with $S\in\cS$, there are objects $\{S_i\}_{i\in I}$ of $\cS$ and maps $f_i\colon S_i\to X_i$ and $g\colon S\to\Plus_{i\in I}S_i$ making the diagram
\[
\xymatrix{
S\ar[r]^-{f}\ar[dr]_{g}&\Plus_{i\in I}X_i\\
&\Plus_{i\in I}S_i.\ar[u]_-{\Plus f_i}
}
\]
commutative.
\end{enumerate}
By \cite[Lemma 4]{K1}, if $\cS$ is a small set of $\alpha$-small objects which is closed under $\alpha$-coproducts, for some regular cardinal $\alpha$, then $\cS$ satisfies (G2) if and only if it satisfies (G4). It is an easy exercise to show that (G4) implies (G2) in complete generality.

It is worth pointing out that (G1), (G3) and (G4) provide the original axiomatization for well generated triangulated categories given in \cite{N2}.
\end{remark}

\begin{ex}\label{ex:nowell}
Clearly, not all triangulated categories which are closed under small coproducts are well generated. This is the case of the homotopy category $\Kc(\Mod{\ZZ})$ of the abelian category $\Mod{\ZZ}$ of abelian groups. This is proved in \cite[Appendix E]{N2}.
\end{ex}

\subsection{Taking Verdier quotients}\label{subsect:wellquot}

Given a triangulated category $\cT$ with small coproducts and a localizing subcategory $\cL$ of $\cT$, the Verdier quotient $\cT/\cL$ might not have small Hom-sets. But if it has, then $\cT/\cL$ is a category (according to our convention) with small coproducts and the natural quotient functor
\[
\fQ\colon\cT\longrightarrow\cT/\cL
\]
commutes with small coproducts by \cite[Corollary 3.2.11]{N2}. Hence, in view of Theorem 5.1.1 and Proposition 2.3.1 in \cite{K2}, the exact functor $\fQ$ has a fully faithful right adjoint $\fQ^R$. Obviously, if $\cT/\cL$ is well generated then, by definition, it has small Hom-sets.

The behavior of well generation under taking Verdier quotients is clarified by the following result due to Neeman (see \cite{N2} but also \cite[Theorem 7.2.1]{K2}). It generalizes the seminal result \cite[Theorem 2.1]{N1}.

\begin{thm}\label{thm:compquot}
Let $\cT$ be a well generated triangulated category and let $\cL$ be a localizing subcategory which is generated by a small set of objects. Fix a regular cardinal $\alpha$ such that $\cT$ is $\alpha$-compactly generated and $\cL$ is generated by $\alpha$-compact objects. Then
\begin{itemize}
\item[{\rm (i)}] The localizing subcategory $\cL$ and the quotient $\cT/\cL$ are $\alpha$-compactly generated;
\item[{\rm (ii)}] $\cL^\alpha=\cL\cap\cT^\alpha$;
\item[{\rm (iii)}] The quotient functor $\fQ\colon\cT\to\cT/\cL$ sends $\cT^\alpha$ to $(\cT/\cL)^\alpha$;
\item[{\rm (iv)}] The induced functor $\fF\colon\cT^\alpha/\cL^\alpha\to(\cT/\cL)^\alpha$ if fully faithful and identifies $(\cT/\cL)^\alpha$ with the idempotent completion of $\cT^\alpha/\cL^\alpha$. If $\alpha>\aleph_0$, then $\fF$ is an equivalence.
\end{itemize}
\end{thm}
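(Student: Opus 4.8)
The plan is to reduce everything to Neeman's foundational theorem \cite[Theorem 2.1]{N1} together with its generalization in \cite{N2}, and to organize the argument around the quotient functor $\fQ\colon\cT\to\cT/\cL$ and its right adjoint. First I would recall from \autoref{subsect:wellquot} that, since $\cL$ is generated by a small set of $\alpha$-compact objects, the Bousfield localization machinery applies: $\fQ$ has a fully faithful right adjoint $\fQ^R$, and $\fQ$ commutes with small coproducts. For part (i) the key point is that if $\cS$ is a set of $\alpha$-compact generators of $\cT$ and $\cS'$ is a set of $\alpha$-compact generators of $\cL$, then $\fQ(\cS)$ generates $\cT/\cL$ (because $\fQ$ is essentially surjective and commutes with coproducts, so a localizing subcategory of $\cT/\cL$ containing $\fQ(\cS)$ pulls back to one containing $\cS$ and hence equal to $\cT$); and one checks that the objects of $\fQ(\cS)$ remain $\alpha$-small in $\cT/\cL$ using that $\fQ^R$ commutes with coproducts modulo $\cL$-local objects, which is where the hypothesis that $\cL$ is $\alpha$-compactly generated enters. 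That $\cL$ itself is $\alpha$-compactly generated is part of the hypothesis, or follows from \cite[Theorem 7.2.1]{K2}.

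Next, for (ii), the inclusion $\cL^\alpha\subseteq\cL\cap\cT^\alpha$ is immediate because $\cL^\alpha$ is by definition the smallest $\alpha$-localizing subcategory of $\cL$ containing the chosen generators, and this sits inside $\cT^\alpha$. For the reverse inclusion one argues that an object of $\cL$ which is $\alpha$-compact in $\cT$ must already be built, using $\alpha$-coproducts and (for $\alpha>\aleph_0$ automatically) direct summands, from the $\alpha$-compact generators of $\cL$; this is exactly the content of the relevant lemma in \cite{N1,N2} and uses the $\alpha$-smallness axiom (G3) together with (G4)/(G2) as recalled in \autoref{rmk:G4}. For (iii), one shows $\fQ$ sends $\alpha$-compact objects to $\alpha$-compact objects: an object $X\in\cT^\alpha$ is obtained from $\cS$ by $\alpha$-localizing operations, and $\fQ$ preserves triangles, $\alpha$-coproducts and summands, so $\fQ(X)$ lies in the $\alpha$-localizing subcategory generated by $\fQ(\cS)$, which is $(\cT/\cL)^\alpha$ by (i).

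Finally, part (iv) is the heart of the matter. The induced functor $\fF\colon\cT^\alpha/\cL^\alpha\to(\cT/\cL)^\alpha$ exists because $\cL^\alpha=\cL\cap\cT^\alpha$ is sent to $0$. Full faithfulness is a computation with the adjunction $(\fQ,\fQ^R)$: for $A,B\in\cT^\alpha$ one identifies $(\cT/\cL)(\fQ A,\fQ B)$ with a colimit of $\cT(A',B)$ over maps $A'\to A$ with cone in $\cL$, and the point is that one may restrict to such $A'$ in $\cT^\alpha$ with cone in $\cL^\alpha$ — this is again the $\alpha$-compact approximation argument. The identification of $(\cT/\cL)^\alpha$ with the idempotent completion of $\cT^\alpha/\cL^\alpha$ follows because $\cT^\alpha/\cL^\alpha$ need not be idempotent-complete (Verdier quotients of idempotent-complete categories generally aren't), while $(\cT/\cL)^\alpha$ is $\alpha$-localizing hence closed under summands; one shows the image of $\fF$ generates $(\cT/\cL)^\alpha$ under summands and invokes the universal property of idempotent completion. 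When $\alpha>\aleph_0$, direct summands are already allowed inside an $\alpha$-localizing subcategory, so $\cT^\alpha/\cL^\alpha$ is itself idempotent-complete and $\fF$ is an equivalence. The main obstacle I anticipate is the $\alpha$-compact approximation step underlying (ii) and the full faithfulness in (iv): one must show that any map $A'\to A$ in $\cT$ with $A\in\cT^\alpha$ and cone in $\cL$ can be "improved" to one with $A'\in\cT^\alpha$ and cone in $\cL^\alpha=\cL\cap\cT^\alpha$, which requires a careful transfinite induction exploiting that $\cL$ is generated by $\alpha$-compact objects and that $\alpha$ is regular — this is precisely the technical core of \cite[Theorem 2.1]{N1} in the compactly generated case and of \cite{N2} in general, and I would cite it rather than reprove it.
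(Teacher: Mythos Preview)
The paper does not supply a proof of this theorem: it is stated as a result due to Neeman, with references to \cite{N2} and \cite[Theorem 7.2.1]{K2}, and no argument is given. Your proposal sketches the standard line of reasoning and, at the crucial technical step (the $\alpha$-compact approximation underlying (ii) and the full faithfulness in (iv)), explicitly defers to the same sources; so in the end your treatment is consistent with the paper's, only more expansive.
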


When we say that $(\cT/\cL)^\alpha$ is the idempotent completion of $\cT^\alpha/\cL^\alpha$ we mean that any object in $(\cT/\cL)^\alpha$ is isomorphic to a summand of an object in $\cT^\alpha/\cL^\alpha$.

\begin{ex}\label{ex:alphacompgen}
Let $\cG$ be a Grothendieck category. Recall that this means that $\cG$ is an abelian category which is closed under small coproducts, has a small set of generators $\cS$ and the direct limits of short exact sequences are exact in $\cG$. The objects in $\cS$ are generators in the sense that, for any $C$ in $\cG$, there exists an epimorphism $S\epi C$ in $\cG$, where $S$ is a small coproduct of objects in $\cS$. Without loss of generality, by taking the coproduct of all generators, we can assume that $\cG$ has a single generator $G$.

In this case, we can take $A:=\cG(G,G)$ to be the endomorphism ring of $G$. By \cite[Proposition 5.1]{AJ}  (see also \cite[Example 7.7]{K2}), there is an exact equivalence $\D(\cG)\iso\D(\Mod{A})/\cL$, for a localizing subcategory $\cL$ of $\D(\Mod{A})$. Here $\Mod{A}$ denotes the category of right $A$-modules. It was observed in \cite[Theorem 0.2]{N3} that $\D(\cG)$ is well generated.
\end{ex}

We can use this to show that there are triangulated categories which are well generated but not compactly generated. The following example is due to Neeman.

\begin{ex}[\cite{N3}]\label{ex:noncomp}
Let $X$ be a non-compact connected complex manifold of dimension greater than or equal to $1$. Consider the Grothendieck category $\Sh(X)$ of sheaves of abelian groups on $X$. The derived category $\D(\Sh(X))$ is well generated by \autoref{ex:alphacompgen}. But, on the other hand, by \cite[Theorem 0.1]{N3}, we have that $\D(\Sh(X))^c=\{0\}$ and thus $\D(\Sh(X))$ is not compactly generated.
\end{ex}

We want now to discuss a curious phenomenon that marks a difference between compactly generated and $\alpha$-compactly generated triangulated categories, for $\alpha>\aleph_0$. More precisely, we exhibit a triangulated category $\cT$ with a set $\cS$ of compact generators and a Verdier quotient $\fQ\colon\cT\to\cT/\cS$ such that $\fQ(\cS)$ satisfies (G1) and (G3) but not (G2).

Let $\cT$ be a well generated triangulated category and let $\cS$ be a set of $\alpha$-compact generators for $\cT$. Assume further that $\cL$ is a localizing subcategory of $\cT$ which is generated by a small set of $\alpha$-compact objects.

\begin{prop}\label{lem:inducedgen}
For $\cT$, $\cS$ and $\cL$ as above, the set $\fQ(\cS)$ satisfies {\rm (G1)} and {\rm (G3)} in $\cT/\cL$. In particular, if $\alpha=\aleph_0$, then $\fQ(\cS)$ compactly generates $\cT/\cL$.
\end{prop}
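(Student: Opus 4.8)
The plan is to verify (G1) and (G3) for $\fQ(\cS)$ directly, using the properties of the quotient functor $\fQ$ and the fact that it commutes with small coproducts. For (G1), I would use the adjunction: recall from \autoref{subsect:wellquot} that $\fQ$ admits a fully faithful right adjoint $\fQ^R$. Given an object $Y\in\cT/\cL$, suppose $(\cT/\cL)(\fQ(S),Y[j])=0$ for all $S\in\cS$ and all $j\in\ZZ$. By adjunction this says $\cT(S,\fQ^R(Y)[j])=0$ for all such $S$ and $j$ (using that $\fQ^R$ commutes with shifts, being an exact functor of triangulated categories). Since $\cS$ satisfies (G1) in $\cT$, this forces $\fQ^R(Y)\iso 0$, hence $Y\iso\fQ(\fQ^R(Y))\iso 0$ because $\fQ^R$ is fully faithful (so the counit $\fQ\fQ^R\Rightarrow\id$ is an isomorphism). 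Conversely if $Y\iso 0$ then trivially all those Hom-spaces vanish. Alternatively, one can invoke \autoref{rmk:gen}: $\cS$ generates $\cT$ in the sense of smallest localizing subcategory, $\fQ$ is essentially surjective and commutes with small coproducts, so $\fQ(\cS)$ generates $\cT/\cL$, which by \autoref{rmk:gen} again (applied in $\cT/\cL$, once we know it is well generated) is equivalent to (G1); but to avoid circularity with well generation of the quotient it is cleaner to argue via the adjoint as above.

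For (G3), I would invoke \autoref{thm:compquot}. Under the stated hypotheses — $\cT$ is $\alpha$-compactly generated by $\cS$ and $\cL$ is generated by a small set of $\alpha$-compact objects — part (iii) of that theorem says $\fQ$ sends $\cT^\alpha$ into $(\cT/\cL)^\alpha$. Since $\cS\subseteq\cT^\alpha$ by hypothesis, we get $\fQ(\cS)\subseteq(\cT/\cL)^\alpha$, and by part (i) the quotient $\cT/\cL$ is $\alpha$-compactly generated, so in particular every object of $(\cT/\cL)^\alpha$ is $\alpha$-small. Hence every object of $\fQ(\cS)$ is $\alpha$-small, which is precisely (G3) with the same regular cardinal $\alpha$.

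For the final assertion, take $\alpha=\aleph_0$. Then $\cT$ is compactly generated by $\cS$ and $\cL$ is generated by a small set of compact objects, so \autoref{thm:compquot} applies with $\alpha=\aleph_0$: the set $\fQ(\cS)$ consists of compact objects of $\cT/\cL$ (by part (iii), since $\cT^{\aleph_0}=\cT^c$), and we have just shown it satisfies (G1). But as recalled right before \autoref{rmk:gen}, a small set of compact objects satisfying (G1) automatically compactly generates, because (G2) and (G3) come for free in the $\aleph_0$ case. Hence $\fQ(\cS)$ compactly generates $\cT/\cL$.

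The main obstacle is making the (G1) argument airtight without presupposing that $\cT/\cL$ is well generated (otherwise \autoref{rmk:gen} would give the cleanest route but risk circularity, since well generation of $\cT/\cL$ is part of what \autoref{thm:compquot}(i) provides). Routing (G1) through the fully faithful right adjoint $\fQ^R$ sidesteps this, at the cost of checking the elementary facts that $\fQ^R$ is exact (hence commutes with shifts) and that $\fQ\fQ^R\Rightarrow\id$ is an isomorphism — both standard for a Verdier quotient with small Hom-sets. Note also that the proposition, as stated, does \emph{not} claim (G2) for $\fQ(\cS)$ when $\alpha>\aleph_0$ — indeed the discussion preceding the statement flags precisely that (G2) may fail — so no effort should be spent trying to prove it.
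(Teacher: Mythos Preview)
Your proposal is correct. The paper's own proof is extremely terse: it invokes \autoref{thm:compquot} to get that $\cT/\cL$ is $\alpha$-compactly generated and then defers both (G1) and (G3) to an external reference (\cite[Proposition 1.7]{CS6}), concluding the $\aleph_0$ case exactly as you do. Your argument unpacks what that citation presumably contains: the (G1) step via the fully faithful right adjoint $\fQ^R$ is precisely the device the paper itself uses later in the proof of \autoref{lem:compar1}, and the (G3) step via \autoref{thm:compquot}(iii) is the natural route. The one place where you lean on a fact not spelled out in the paper is the assertion that every object of $(\cT/\cL)^\alpha$ is $\alpha$-small; this is standard in the Neeman--Krause theory of well generated categories, but it is a theorem rather than an immediate consequence of the definitions recalled in \autoref{subsect:introwell}, so a pointer to \cite{N2} or \cite{K1} there would make the argument fully self-contained.
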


\begin{proof}
By \autoref{thm:compquot}, the Verdier quotient $\cT/\cL$ is $\alpha$-compactly generated. Hence, the fact that (G1) and (G3) hold true follows from \cite[Proposition 1.7]{CS6}. By the discussion in \autoref{subsect:introwell}, the set  $\fQ(\cS)$ satisfies (G2) as well, if $\alpha=\aleph_0$.
\end{proof}

On the other hand, if $\alpha>\aleph_0$, then we cannot expect that $\fQ(\cS)$ satisfies (G2). To see this, consider a commutative non-noetherian ring $R$ and take the Grothendieck category $\cC:=\Mod{R}$ of $R$-modules. By \autoref{ex:alphacompgen}, the triangulated category $\D(\cC)$ is well generated. 

By \cite{GU}, we have a filtration
\begin{equation}\label{eqn:filtr}
\cC=\bigcup_\alpha\cC^\alpha,
\end{equation}
where $\alpha$ runs over all sufficiently large regular cardinals and $\cC^\alpha$ is the full subcategory of $\cC$ consisting of $\alpha$-presentable objects. Recall that an object $A$ in $\cC$ is \emph{$\alpha$-presentable} if the functor $\cC(A,\farg)\colon\cC\to\Mod{\K}$ preserves $\alpha$-filtered colimits (see \cite[Section 6.4]{K2}, for the definition of $\alpha$-filtered colimit).

It is explained in the proof of \cite[Theorem 5.10]{K0} that $\D(\cC)$ can be realized as a Verdier quotient $\fQ\colon\D(\Mod{\cC^\alpha})\to\D(\cC)$ such that the kernel of $\fQ$ is generated by a small set of $\alpha$-compact objects, for all sufficiently large regular cardinals $\alpha$. Moreover, it is an easy exercise to show that $\D(\Mod{\cC^\alpha})$ is compactly generated by the set $\cS$ of objects in the image of $\Yon[\cC^{\alpha}]\colon\cC^\alpha\to\Mod{\cC^\alpha}\subset\D(\Mod{\cC^\alpha})$.

\begin{remark}\label{rmk:gc}
If $\fQ$ is the quotient functor mentioned above, then it is a general fact that the composition $\fQ\comp\Yon[\cC^{\alpha}]$ is isomorphic to the inclusion  $\cC^\alpha\subseteq\cC\subset\D(\cC)$ (see \cite[Corollary 5.3]{CS6}). Hence $\fQ(\cS)$ can be identified with $\cC^\alpha$.
\end{remark}

By \autoref{lem:inducedgen} the objects of $\fQ(\cS)$ satisfy (G1) and (G3). On the other hand, we have the following.

\begin{prop}\label{prop:inducedgen2}
In the above setting and for a sufficiently large regular cardinal $\alpha$, the objects in $\fQ(\cS)$ do not satisfy {\rm (G2)}.
\end{prop}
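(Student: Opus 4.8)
The plan is to write down, for every sufficiently large regular cardinal $\alpha$, one explicit small family of morphisms in $\D(\cC)$ witnessing the failure of (G2) for the set $\fQ(\cS)$. By \autoref{rmk:gc} we may identify $\fQ(\cS)$ with $\cC^\alpha$, the collection of $\alpha$-presentable $R$-modules regarded as objects of $\D(\cC)=\D(\Mod R)$ concentrated in degree $0$. Recall that (G2) would demand: whenever $\{X_i\to Y_i\}_{i\in I}$ is a small family of morphisms of $\D(\cC)$ such that $\D(\cC)(S,X_i)\to\D(\cC)(S,Y_i)$ is surjective for every $i$ and every $\alpha$-presentable $S$, then $\D(\cC)(S,\Plus_iX_i)\to\D(\cC)(S,\Plus_iY_i)$ is surjective for every $\alpha$-presentable $S$. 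So it suffices to exhibit one family for which the hypothesis holds but the conclusion fails.

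The input from non-noetherianity is the Bass--Papp theorem: since $R$ is not noetherian, there is a family $\{E_i\}_{i\in\NN}$ of injective $R$-modules whose direct sum $E:=\Plus_{i\in\NN}E_i$ is not injective (concretely one may take $E_i:=E(R/J_i)$ for a strictly increasing chain of ideals $J_1\subsetneq J_2\subsetneq\cdots$). I would then take the family $X_i:=0$, $Y_i:=\sh{E_i}$, $i\in\NN$. For any $R$-module $S$ one has $\D(\cC)(S,\sh{E_i})=\Ext^1_R(S,E_i)=0$ because $E_i$ is injective; hence $\D(\cC)(S,X_i)=0\to\D(\cC)(S,Y_i)=0$ is (trivially) surjective for every $S\in\cC^\alpha$ and every $i$, so the hypothesis of (G2) is satisfied.

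It remains to check that the conclusion fails. Since shifts commute with coproducts in $\D(\cC)$ we have $\Plus_iY_i=\sh{E}$, so $\D(\cC)(S,\Plus_iY_i)=\Ext^1_R(S,E)$ for every module $S$. As $E$ is not injective, Baer's criterion yields an ideal $J\subseteq R$ with $\Ext^1_R(R/J,E)\neq0$; and here is the only place where ``$\alpha$ sufficiently large'' is really used: because $J$ is generated by at most $\card R$ elements, $S_0:=R/J$ is $\alpha$-presentable as soon as $\alpha>\card R$, hence $S_0\in\cC^\alpha$. Then
\[
\D(\cC)(S_0,\Plus_iX_i)=\D(\cC)(S_0,0)=0\lto\D(\cC)(S_0,\Plus_iY_i)=\Ext^1_R(R/J,E)\neq0
\]
is not surjective, which proves that $\fQ(\cS)$ does not satisfy (G2). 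There is no serious obstacle in this approach; the whole content is the choice of family, and in particular the shift by $\sh{\farg}$, which simultaneously turns the hypothesis of (G2) into the automatic vanishing of $\Ext^1$ into an injective module and the conclusion into the non-vanishing of $\Ext^1$ into the non-injective sum $E$ — that is, into the failure of the noetherian property. (Consistently, for $R$ noetherian the same recipe yields no contradiction, since then $\Plus_iE_i$ is again injective.)
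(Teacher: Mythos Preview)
Your argument is correct and follows essentially the same route as the paper: both take the family of maps $0\to J_i[1]$ with $\{J_i\}$ a set of injective $R$-modules whose coproduct is not injective (Bass--Papp), observe that $\Ext^1_R(S,J_i)=0$ for all modules $S$, and then pick a module $S_0$ (the paper just takes any $M$ with $\Ext^1_R(M,\Plus_iJ_i)\neq0$, you use Baer's criterion to get $S_0=R/J$) and a sufficiently large $\alpha$ so that $S_0\in\cC^\alpha$. The only cosmetic difference is that the paper phrases it as a proof by contradiction, while you exhibit the witnessing family directly.
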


\begin{proof}
For a contradiction, assume that $\fQ(\cS)$ satisfies (G2). Let then $\{X_i\}_{i\in I}$ be a small set of objects in $\D(\cC)$ with the property that $\D(\cC)\left(S,X_i\right)=0$, for all $S\in\fQ(\cS)$ and all $i\in I$. Then we claim that $\D(\cC)\left(S,\Plus_i X_i\right)=0$, for all $S\in\cS$.

Indeed, it is obvious that the set of maps $\{0\to X_i\}_{i\in I}$ is such that the induced maps
\[
\D(\cC)\left(S,0\right)\longrightarrow\D(\cC)\left(S,X_i\right)\iso 0
\]
are surjective, for all $S\in\fQ(\cS)$. Since $\fQ(\cS)$ satisfies (G2), the induced map
\[
\D(\cC)\left(S,0\right)\longrightarrow\D(\cC)\Bigl(S,\Plus_i X_i\Bigr)
\]
is surjective for all $S\in\fQ(\cS)$ and so $\D(\cC)\left(S,\Plus_i X_i\right)=0$, for all $S\in\fQ(\cS)$.

Take now a small set $\{J_i\}_{i\in I}$ of injective $R$-modules whose direct sum $J$ is not injective. Note that such a set exists by
Bass--Papp Theorem (see \cite{Ch}) which states that every small coproduct of injective $R$-modules is injective if and only if $R$ is noetherian. As a consequence, there is an $R$-module $M$ such that $\D(\cC)(M,J[1])\neq 0$.

Let $\alpha$ be a large enough regular cardinal so
that $M\in\cC^\alpha$. Such an $\alpha$ exists because of \eqref{eqn:filtr}.
Then, for every $i\in I$ and every $N\in\cC^\alpha\subseteq\cC$, we have $\D(\cC)(N,J_i[1])=0$, because $J_i$ is an injective module. By the discussion above, and taking into account \autoref{rmk:gc}, this would imply $\D(\cC)(N,J[1])=0$, for all $N\in\cC^\alpha$. But $M\in\cC^\alpha$ and this is a contradiction.
\end{proof}

If $\cC$ is a Grothendieck category then there is a natural choice for a set of $\alpha$-compact generators. Indeed, if $\alpha$ is a sufficiently large regular cardinal, then, by \cite[Theorem 5.10]{K0}, we have a natural exact equivalence
\begin{equation}\label{eqn:alpcom}
\D(\cC)^\alpha\iso\D(\cC^\alpha)
\end{equation}
and $\cC^\alpha$ is an abelian category.
As an easy consequence of  \cite[Corollary 5.11]{K0}, the triangulated subcategory $\D(\cC)^\alpha$ forms a set of $\alpha$-compact generators for $\D(\cC)$. By \autoref{prop:inducedgen2}, the category $\D(\cC)^\alpha$ cannot be replaced by $\cC^\alpha$. Nevertheless, the following result shows that one can get a set of $\alpha$-compact generators by taking a full subcategory of $\D(\cC)^\alpha$.

\begin{prop}\label{prop:inducedgen3}
Let $\cC$ be a Grothendieck category and take a sufficiently large regular cardinal $\alpha$. Then $\Dm(\cC^\alpha)$ is a set of $\alpha$-compact generators for $\D(\cC)$.
\end{prop}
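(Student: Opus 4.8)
The plan is to show that $\Dm(\cC^\alpha)$ — more precisely a small set of representatives of its isomorphism classes, available because $\cC^\alpha$, hence $\Dm(\cC^\alpha)$, is essentially small — is a small set of objects of $\D(\cC)$ satisfying {\rm (G1)}, {\rm (G2)} and {\rm (G3)} of \autoref{def:wellgen}, with $\alpha$ as the regular cardinal in {\rm (G3)}. First I would fix $\alpha$ sufficiently large that $\cC^\alpha$ is an abelian subcategory of $\cC$ closed under kernels and that \eqref{eqn:alpcom} holds; through the identification $\D(\cC^\alpha)\iso\D(\cC)^\alpha$ the category $\Dm(\cC^\alpha)$ becomes a full subcategory of $\D(\cC)^\alpha\subseteq\D(\cC)$, compatibly with the truncation functors. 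Condition {\rm (G3)} is then immediate, since every object of $\Dm(\cC^\alpha)$ is $\alpha$-compact, hence $\alpha$-small. Condition {\rm (G1)} follows from the inclusion $\cC^\alpha\subseteq\Dm(\cC^\alpha)$ together with the fact that $\cC^\alpha$ already satisfies {\rm (G1)} in $\D(\cC)$ (by \autoref{lem:inducedgen} and \autoref{rmk:gc}), since {\rm (G1)} only becomes easier for a larger set.

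The substance of the proof is {\rm (G2)}, which I would obtain by verifying the stronger condition {\rm (G4)} of \autoref{rmk:G4} (recall that {\rm (G4)} implies {\rm (G2)} in complete generality). The key input is that $\D(\cC)^\alpha$ — a set of $\alpha$-compact generators by the discussion preceding the statement, and $\alpha$-localizing by definition, hence closed under $\alpha$-coproducts and consisting of $\alpha$-small objects — satisfies {\rm (G4)} by \autoref{rmk:G4}. So let $\{X_i\}_{i\in I}$ be a small family in $\D(\cC)$ and let $f\colon A\to\Plus_iX_i$ be a morphism with $A\in\Dm(\cC^\alpha)$, say $A\in\D[\le n](\cC)$ for some $n$. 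Applying {\rm (G4)} for $\D(\cC)^\alpha$ to $f$ produces objects $A'_i\in\D(\cC^\alpha)$, morphisms $f'_i\colon A'_i\to X_i$, and a morphism $g'\colon A\to\Plus_iA'_i$ with $(\Plus_if'_i)\comp g'=f$. The obstacle is that the $A'_i$ need not be bounded above, hence need not lie in $\Dm(\cC^\alpha)$; I would repair this by truncation. Set $A_i:=\tau_{\le n}(A'_i)$: since $\cC^\alpha$ is closed under kernels one has $A_i\in\Dm(\cC^\alpha)$, and the cone of the canonical morphism $\iota\colon\Plus_iA_i\to\Plus_iA'_i$ is $\Plus_i\tau_{\ge n+1}(A'_i)$, which lies in $\D[\ge n+1](\cC)$ because coproducts are exact in the Grothendieck category $\cC$. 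As $A\in\D[\le n](\cC)$, the $t$-structure orthogonality $\Hom_{\D(\cC)}(\D[\le n](\cC),\D[\ge n+1](\cC))=0$ forces $g'$ to factor through $\iota$, say as $g''\colon A\to\Plus_iA_i$. Then the objects $A_i\in\Dm(\cC^\alpha)$, the composites $f_i\colon A_i\hookrightarrow A'_i\xrightarrow{f'_i}X_i$, and the morphism $g''$ witness {\rm (G4)} for $\Dm(\cC^\alpha)$.

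Combining the three verifications, $\Dm(\cC^\alpha)$ is a small set of objects of $\D(\cC)$ satisfying {\rm (G1)}, {\rm (G2)} and {\rm (G3)}, hence a set of $\alpha$-compact generators for $\D(\cC)$ (its objects being $\alpha$-compact, as they lie in $\D(\cC^\alpha)\iso\D(\cC)^\alpha$). I expect the truncation step in the proof of {\rm (G2)}/{\rm (G4)} to be the only genuinely delicate point: it is precisely this step that breaks down for the smaller set $\cC^\alpha$ of \autoref{prop:inducedgen2}, since there one would be forced to truncate the $A'_i$ down to degree $0$ and thereby leave $\cC^\alpha$, whereas the boundedness above of $A$ lets one stop at the uniform bound $n$ and stay within $\Dm(\cC^\alpha)$.
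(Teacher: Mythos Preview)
Your proof is correct and follows the same overall strategy as the paper: verify (G3) trivially, (G1) directly, and (G2) by passing through (G4) via a truncation of the factorization furnished by (G4) for $\D(\cC)^\alpha\iso\D(\cC^\alpha)$. The one difference is in (G1): you observe that $\cC^\alpha\subseteq\Dm(\cC^\alpha)$ and that $\cC^\alpha$ already satisfies (G1) by \autoref{lem:inducedgen} and \autoref{rmk:gc}, which is shorter; the paper instead shows that $\D(\cC)(A',X)=0$ for all $A'\in\Dm(\cC^\alpha)$ forces $\D(\cC)(A,X)=0$ for all $A\in\D(\cC^\alpha)$ via the homotopy-colimit triangle $\Plus_i\tau_{\le i}A\to\Plus_i\tau_{\le i}A\to A$, and then invokes (G1) for $\D(\cC^\alpha)$. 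The (G4) arguments coincide, yours being spelled out in slightly more detail (the explicit cone computation and the construction of the $f_i$).
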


\begin{proof}
Since $\Dm(\cC^\alpha)$ is a full subcategory of $\D(\cC^\alpha)$ and the latter satisfy (G3), then the former satisfies (G3) as well. To prove (G1), let $X\in\D(\cC)$ be such that
\begin{equation}\label{eqn:zero}
\D(\cC)(A',X)=0,
\end{equation}
for all $A'\in\Dm(\cC^\alpha)$. On the other hand, if $A$ is any complex in $\D(\cC^\alpha)$, then, by \cite[Tags 093W and 0949]{SP}, we have an exact triangle
\[
\Plus_{i\in\ZZ}\tau_{\leq i} A\to\Plus_{i\in\ZZ}\tau_{\leq i} A\to A.
\]
Since for $\alpha$ sufficiently large $\cC^\alpha$ is an abelian category, we have $\tau_{\leq i} A\in\Dm(\cC^\alpha)$ and
\[
\D(\cC)\Bigl(\Plus_{i\in\ZZ}\tau_{\leq i} A, X\Bigr)\iso\prod_{i\in\ZZ}\D(\cC)\Bigl(\tau_{\leq i} A, X\Bigr)\iso 0,
\]
by \eqref{eqn:zero}. Hence $\D(\cC)(A, X)\iso0$, for all $A\in\D(\cC^\alpha)$. Since we observed above that the objects in $\D(\cC^\alpha)$ satisfy (G1), we conclude that $X\iso0$.

We know that $\D(\cC^\alpha)$ satisfies (G2) and thus, by \autoref{rmk:G4}, it satisfies (G4). Take a small set of objects $\{X_i\}_{i\in I}$ in $\cT$ and a map $f\colon S\to\Plus_{i\in I} X_i$ with $S\in\Dm(\cC^\alpha)$. Clearly, there are objects $\{S_i\}_{i\in I}$ of $\D(\cC^\alpha)$ and maps $f_i\colon S_i\to X_i$ making the diagram
\[
\xymatrix{
S\ar[r]^-{f}\ar[dr]_{g}&\Plus_{i\in I}X_i\\
&\Plus_{i\in I}S_i.\ar[u]_-{\Plus f_i}
}
\]
commutative, for some $g\colon S\to\Plus_{i\in I}S_i$. Since $S$ is bounded above, the morphism $g$ factors through $\tau_{\leq n}\left(\Plus_{i\in I}S_i\right)\iso\Plus_{i\in I}\tau_{\leq n}(S_i)$, for some $n\in\ZZ$.  This implies that $\Dm(\cC^\alpha)$ satisfies (G4). Again by \autoref{rmk:G4}, the triangulated category $\Dm(\cC^\alpha)$ satisfies (G2).
\end{proof}

\section{Bad news: enhancing triangulated categories}\label{sect:badcat}

In this section we explain the known examples of triangulated categories without enhancements or with non-unique enhancements. In the algebro-geometric setting it will be explained later that the quite disheartening scenery pictured in this section cannot appear.

\subsection{Preliminaries: algebraic triangulated categories}\label{subsect:prelfun}

According to \cite{Qu}, an \emph{exact category} is an additive category $\cA$ with a distinguished class of sequences
\begin{equation}\label{exseq}
\xymatrix{
0\ar[r]&A\ar[r]^-{i}&B\ar[r]^-{p}&C\ar[r]&0,} 
\end{equation}
which are called \emph{exact} and satisfy some axioms which we do not need to list here. In particular $i$ is a kernel of $p$ and $p$ is a cokernel of $i$. For example, every abelian category, and more generally every full and extension-closed subcategory of an abelian category, is an exact category with respect to the class of short exact sequences. In fact it can be proved that every small exact category is of this form, up to equivalence (see, for example, \cite[Section 7.7]{K3}). Many common constructions of homological algebra can be extended from abelian to exact categories. For instance, an object $A$ in an exact category $\cA$ is \emph{injective} (respectively \emph{projective}) if the functor $\cA(\farg,A)$ (respectively $\cA(A,\farg)$) preserves exact sequences. Then one says that $\cA$ has \emph{enough injectives} (respectively \emph{enough projectives}) if for every object $A$ (respectively $C$) of $\cA$ there is an exact sequence as in \eqref{exseq} such that $B$ is injective (respectively projective).

A \emph{Frobenius category} is an exact category with enough injectives, enough projectives and such that an object is injective if and only if it is projective. The \emph{stable category} $\St{\cF}$ of a Frobenius category $\cF$ has the same objects as $\cF$, whereas morphisms are stable equivalence classes of morphisms of $\cF$. Here two morphisms $f,g\colon A\to B$ of $\cF$ are defined to be stably equivalent if $f-g$ factors through an injective (or projective) object. There is a natural way to put a triangulated structure on $\St{\cF}$. Indeed, the shift is defined by choosing, for every object $A$, an exact sequence
\[
\xymatrix{
0 \ar[r] & A \ar[r] & I \ar[r] & A[1] \ar[r] & 0
}
\]
in $\cF$ with $I$ injective. Moreover, the distinguished triangles are those isomorphic to the images in $\St{\cF}$ of the sequences in $\cF$ of the form $(i,p,e)$, where $(i,p)$ defines an exact sequence as in \eqref{exseq} and $e$ is part of a commutative diagram
\[
\xymatrix{
0 \ar[r] & A \ar[d]_{\id} \ar[r]^-i & B \ar[d] \ar[r]^-p & C \ar[d]^e \ar[r] & 0 \\
0 \ar[r] & A \ar[r] & I \ar[r] & A[1] \ar[r] & 0.
} 
\]

\begin{prop}\label{algchar}
The following conditions are equivalent for a triangulated category $\cT$.
\begin{enumerate}
\item\label{condsub} There is an exact and fully faithful functor $\cT\to\St{\cF}$ for some Frobenius category $\cF$.
\item\label{condFrob} There is an exact equivalence $\cT\to\St{\cF}$ for some Frobenius category $\cF$.
\item\label{condhomot} There is an exact and fully faithful functor $\cT\to\Kc(\cA)$ for some additive category $\cA$.
\item\label{condenhance} $\cT$ is algebraic.
\end{enumerate}
\end{prop}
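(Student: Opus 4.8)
The plan is to prove the chain of implications $(\ref{condFrob})\Rightarrow(\ref{condsub})$, $(\ref{condsub})\Rightarrow(\ref{condhomot})$, $(\ref{condhomot})\Rightarrow(\ref{condenhance})$, and $(\ref{condenhance})\Rightarrow(\ref{condFrob})$, which together give the full equivalence. The first implication is trivial: an exact equivalence is in particular an exact fully faithful functor. The implication $(\ref{condenhance})\Rightarrow(\ref{condFrob})$ is the classical fact (going back to work of Keller) that for a pretriangulated dg category $\cC$, the category $\Zo(\cC)$ carries a natural Frobenius exact structure whose stable category is $\Ho(\cC)$; combined with the enhancement equivalence $\fE\colon\Ho(\cC)\isomor\cT$ this yields an exact equivalence $\cT\isomor\St{\cF}$. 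One should be slightly careful here, since $\cC$ being pretriangulated in the sense of \autoref{def:pretr} means one is really working inside $\hproj{\cC}$ (or $\essim{\cC}$); the cleanest route is to observe that $\Zo(\hproj{\cC})$, or more simply $\Zo(\Cdg(\cA))$ for the relevant $\cA$, is Frobenius with the degreewise-split short exact sequences as its exact structure, acyclic complexes as the injective-projective objects, and stable category $\Kc(\cA)$.

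For $(\ref{condsub})\Rightarrow(\ref{condhomot})$: given an exact fully faithful $\cT\to\St{\cF}$, it suffices to produce an exact fully faithful functor $\St{\cF}\to\Kc(\cA)$ for some additive $\cA$, and then compose. Here I would take $\cA$ to be the full additive subcategory of $\cF$ consisting of the injective (equivalently projective) objects, and send an object $A$ of $\cF$ to a complete ``acyclic'' resolution: splice a projective resolution and an injective coresolution of $A$ by injective-projectives into a doubly-infinite complex in $\cA$ whose only nonzero cohomology would be... in fact one arranges it to be genuinely acyclic in $\cF$, so that after applying $\cA(\farg, I)$ nothing survives, and the resulting functor $\St{\cF}\to\Kc(\cA)$ is well-defined on morphisms modulo homotopy and is exact by construction of the triangulated structure on $\St{\cF}$. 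Full faithfulness follows because stable maps $A\to B$ are computed by the same complete resolutions. This is a standard ``stabilization'' argument; the bookkeeping with the splicing and with checking that homotopic maps of complexes correspond to stably equivalent maps is the only mildly technical point.

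For $(\ref{condhomot})\Rightarrow(\ref{condenhance})$, which I expect to be the main obstacle: given an exact fully faithful $\fI\colon\cT\to\Kc(\cA)$, I want to produce a pretriangulated dg category $\cC$ with $\Ho(\cC)\iso\cT$. The natural candidate is the full dg subcategory $\cC$ of $\Cdg(\cA)$ whose objects are all complexes $C$ such that the image of $C$ in $\Kc(\cA)=\Ho(\Cdg(\cA))$ lies in the essential image of $\fI$. By \autoref{ex:dgcat1}(iv), $\cC$ is a dg category, and $\Ho(\cC)$ is by construction a full triangulated subcategory of $\Kc(\cA)$ equivalent to $\cT$ via (a quasi-inverse of) $\fI$. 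The remaining point is that $\cC$ is pretriangulated in the sense of \autoref{def:pretr}: since $\Cdg(\cA)$ is pretriangulated (\autoref{ex:derenh}(iii)) and the essential image of $\Ho(\cC)$ in $\Kc(\cA)$ is a triangulated subcategory — being the image of the exact functor $\fI$ — the cones and shifts computed in $\Cdg(\cA)$ of objects coming from $\cC$ again lie (up to the embedding $\Ho(\dgYon[\cC])$) in the subcategory, which is exactly the pretriangulatedness condition. The subtle part is keeping the universe/smallness conditions under control when $\cA$ is not small, but as noted in the Notation section this is handled by \cite[Appendix A]{LO}, so it can be passed over. This closes the cycle and proves the proposition.
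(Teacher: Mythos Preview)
Your cycle differs from the paper's: the paper proves $(1)\Rightarrow(2)\Rightarrow(3)\Rightarrow(4)\Rightarrow(1)$, while you attempt $(2)\Rightarrow(1)\Rightarrow(3)\Rightarrow(4)\Rightarrow(2)$. Three of your steps are fine and close to the paper's arguments; in particular your $(1)\Rightarrow(3)$ via complete resolutions is essentially the inverse of the paper's equivalence $\Kai(\cF)\to\St{\cF}$, $C\mapsto Z^0(C)$ (so the paper's route is slightly slicker, but yours is correct), and your $(3)\Rightarrow(4)$ is exactly what the paper means by ``it is clear''.

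The gap is in $(4)\Rightarrow(2)$. You correctly sense that for $\cC$ pretriangulated only in the weak sense of \autoref{def:pretr} the category $\Zo(\cC)$ need not be Frobenius (cones need not literally exist in $\cC$), and you propose to pass to $\Zo(\hproj{\cC})$ or $\Zo(\Cdg(\cA))$. But the stable categories of those are $\Ho(\hproj{\cC})\cong\dgD(\cC)$ and $\Kc(\cA)$ respectively, both strictly larger than $\cT\cong\Ho(\cC)$. So your ``cleanest route'' only produces an exact fully faithful embedding $\cT\hookrightarrow\St{\cF}$, i.e.\ $(4)\Rightarrow(1)$, and your cycle does not close: you never reach $(2)$. (Incidentally, the injective-projectives in $\Zo(\Cdg(\cA))$ for additive $\cA$ are the \emph{contractible} complexes, not the acyclic ones.) There are two easy fixes. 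One is to follow the paper: replace your $(4)\Rightarrow(2)$ by $(4)\Rightarrow(1)$ via $\Zo(\dgMod{\cC})$ and add the implication $(1)\Rightarrow(2)$: given a fully faithful exact $\cT\to\St{\cF}$, let $\cF'\subset\cF$ be the full subcategory on objects lying in the essential image, and observe that $\cF'$ is again Frobenius with $\St{\cF'}\cong\cT$. The other is to actually use the category you only mention parenthetically: $\Zo(\essim{\cC})$ genuinely is Frobenius (it is closed in $\dgMod{\cC}$ under shifts and cones, and for each $M$ the contractible object $\mathrm{Cone}(\id_M)$ lies in $\essim{\cC}$), and its stable category is $\Ho(\essim{\cC})\cong\Ho(\cC)\cong\cT$, giving $(4)\Rightarrow(2)$ directly.
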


\begin{proof}
If $\cF$ is a Frobenius category and $\fF\colon\cT\to\St{\cF}$ is an exact and fully faithful functor, let $\cF'$ be the full subcategory of $\cF$ whose objects are those in the essential image of $\fF$ (when regarded as objects of $\St{\cF}$). It is easy to see that also $\cF'$ is a Frobenius category, hence \eqref{condsub} implies \eqref{condFrob}.
  
On the other hand, let $\Kai(\cF)$ be the full subcategory of $\Kc(\cF)$ whose objects are acyclic complexes with injective components. It is not difficult to prove that there is an exact equivalence $\Kai(\cF)\to\St{\cF}$ defined on objects by $C\mapsto Z^0(C)$. This shows that \eqref{condFrob} implies \eqref{condhomot}.

As $\Cdg(\cA)$ is an enhancement of $\Kc(\cA)$ for every additive category $\cA$ by \autoref{ex:derenh} (iii), it is clear that \eqref{condhomot} implies \eqref{condenhance}.

If $\cC$ is an enhancement of $\cT$, then $\Ho(\dgYon[\cC])\colon\Ho(\cC)\to\Ho(\dgMod{\cC})$ is an exact and fully faithful functor. Hence the fact that \eqref{condenhance} implies \eqref{condsub} follows easily from \cite[Lemma 2.2]{K}, where it is proved that $\Zo(\dgMod{\cC})$ is a Frobenius category such that $\St{\Zo(\dgMod{\cC})}=\Ho(\dgMod{\cC})$.
\end{proof}

\begin{remark}\label{rmk:loc1}
It is clear that any full triangulated subcategory $\cS$ of an algebraic triangulated category $\cT$ is algebraic. If the quotient $\cT/\cS$ is a category in our sense, then, by \autoref{ex:derenh} (ii), it is algebraic as well.
\end{remark}

The following concerns a sort of converse of the above remark.

\begin{qn}\label{qn:loc1}
Let $\cS$ be a full triangulated subcategory of a triangulated category $\cT$. Assume that $\cS$ and $\cT/\cS$ are algebraic. Is $\cT$ algebraic as well?
\end{qn}

\begin{ex}\label{ex:Grothalg}
If $\cA$ is an abelian category such that $\D(\cA)$ is a category, then, by the discussion in \autoref{subsect:exenh} (see also \autoref{rmk:loc1}), the triangulated category $\D(\cA)$ is algebraic. One instance where this is certainly true is when $\cA$ is a Grothendieck category. Indeed, in this situation, $\D(\cA)$ is well generated by \autoref{ex:alphacompgen}. On the other hand, in this case there is an exact equivalence between $\D(\cA)$ and the full subcategory of $\Kc(\cA)$ with objects the h-injective complexes (see \cite{Se}). Recall that $I\in\Kc(\cA)$ is called h-injective (or K-injective, as in \cite{Se}) if $\Kc(\cA)(A,I)=0$ for every acyclic complex $A\in\Kc(\cA)$. It follows that, when $\cA$ is a Grothendieck category, a more explicit enhancement of $\D(\cA)$ is given by the full dg subcategory of $\Cdg(\cA)$ with objects the h-injective complexes. Clearly, the full subcategory $\D(\cA)^c$ of $\D(\cA)$ is algebraic as well.

This applies, in particular, when $\cA=\Qcoh(X)$ and $X$ is an algebraic stack. Under additional assumptions on $X$ (see \autoref{cor:geocomp}), $\D(\Qcoh(X))^c$ is the category $\Dp(X)$ of perfect complexes on $X$.
\end{ex}

\subsection{Counterexample to the existence of dg enhancements}\label{subsect:nonexistence}

We start by highlighting the following construction inspired by algebraic topology. A \emph{spectrum} is a sequence $E$ of pointed topological spaces $E_n$ and pointed continuous maps $\sigma_n\colon\Sigma E_n\to E_{n+1}$, called structural morphisms, with an additional property which we describe below. For a topological space $X$ with basepoint $x_0$, $\Sigma X$ stands for the \emph{reduced suspension} of $X$, i.e. the pointed topological space obtained from $X\times [0,1]$ by collapsing $X\times\{0\}$ and $X\times\{1\}$ to two different points and further collapsing $\{x_0\}\times[0,1]$ to a point which is then the basepoint of $\Sigma X$. More precisely, the reduced suspension defines a functor $\Sigma$ in the category of topological spaces with a right adjoint $\Omega$. Indeed, for a topological space $X$, we denote by $\Omega X$ the loop space of $X$. We then require that $\sigma_n$ becomes a homeomorphism under this adjunction. 

\begin{ex}\label{ex:spherespectrum}
If we take $E_n$ to be the $n$-dimensional (pointed) sphere $S^n$ with the obvious identification map $\Sigma S^n\to S^{n+1}$, we get the so called \emph{sphere spectrum}.
\end{ex}

A morphism of spectra $E\to F$ is a sequence of pointed continuous maps $E_n\to F_n$ strictly compatible with the structural homeomorphisms. Denote by $\Spe$ the category of spectra.

The homotopy groups of a spectrum $E$ are the groups
\[
\pi_n(E):=\colim{i\geq\max\{0,-n\}}\pi_{n+i}(E_i),
\]
where the colimit is computed with respect to the natural maps
\[
\pi_{n+i}(E_i)\to\pi_{n+i+1}(\Sigma E_i)\iso\pi_{n+i+1}(E_{i+1}).
\]
Now, one can check that the above composition of morphisms is actually an isomorphism, for $i\geq 0$ and $n+i\geq 0$ (see \cite[Definition 10.9.4]{We}). In other words, we have a natural isomorphism $\pi_n(E)\iso\pi_{n+i}(E_i)$, for $i\geq 0$ and $n+i\geq 0$.

A morphism between spectra is a \emph{weak equivalence} if it induces an isomorphism on the homotopy groups. As it is customary, the \emph{stable homotopy category} $\ho{\Spe}$ is obtained from the category $\Spe$  by formally inverting the weak equivalences. Notice that $\Spe$ can be seen as a model category and $\ho{\Spe}$ is its homotopy category (see \autoref{rmk:modelcat}). It is well-known that $\ho{\Spe}$ is a $\ZZ$-linear triangulated category (see \cite{A,Pu}). 

The following argument is taken from \cite[Section 7.6]{K3}.

\begin{prop}\label{prop:noalg}
The triangulated category $\ho{\Spe}$ is not algebraic.
\end{prop}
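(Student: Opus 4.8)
The plan is to exhibit an obstruction which every algebraic triangulated category must avoid, and then verify that $\ho{\Spe}$ fails it. The classical such obstruction comes from the interaction of torsion and multiplication by integers: in an algebraic triangulated category $\cT$, if $X$ is an object annihilated by an integer $n$ (i.e.\ $n\cdot\id_X=0$), then the same is true after applying any exact functor, and more importantly one can show that $n^2\cdot\id_{Y}=0$ for any $Y$ fitting into a triangle built from copies of $X$ — more precisely, $n$-torsion objects in an algebraic triangulated category satisfy a stronger vanishing than in a general triangulated category. Concretely, I would use the following fact (due to Schwede, and recounted in \cite[Section 7.6]{K3}): if $\cT$ is algebraic and $X\in\cT$ satisfies $n\cdot\id_X=0$, then the object $X/n$ (the cone of $n\cdot\id_X=0$, which splits as $X\oplus X[1]$) has the property that $n\cdot\id_{X/n}=0$ as well — whereas in $\ho{\Spe}$ this fails for $n=2$.

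First I would recall the relevant computation in stable homotopy. Take $X=S/2$, the mod $2$ Moore spectrum, i.e.\ the cone of multiplication by $2$ on the sphere spectrum $S$ from \autoref{ex:spherespectrum}. Then $2\cdot\id_{S/2}\neq 0$: in fact it equals the composite $S/2\to S^1\xrightarrow{\eta} S^1\to (S/2)[1]$ involving the Hopf map $\eta$, and this composite is nonzero because $\eta$ is not divisible by $2$ and the relevant Steenrod operation $\mathrm{Sq}^2$ is nontrivial on the mod $2$ cohomology of $S/2$. Equivalently, $4\cdot\id_{S/2}=0$ but $2\cdot\id_{S/2}\neq 0$. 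This is the standard fact that the mod $2$ Moore spectrum does not admit a unital multiplication and, more to the point, that $2$ does not act as zero on it.

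Next I would invoke the purely algebraic lemma: in any algebraic triangulated category $\cT=\Ho(\cC)$ with $\cC$ pretriangulated, if $f\colon A\to A$ is a closed degree-zero morphism in $\cC$ and we form the cone $C(f)$, then whenever $n\cdot f$ is null-homotopic in $\cC$ one has $n\cdot\id_{C(f)}=0$ in $\Ho(\cC)$ — this uses that in a dg category the cone is functorial and that a null-homotopy of $n\cdot f$ produces an honest chain-level endomorphism witnessing $n\cdot\id_{C(f)}=0$ on the nose. Applying this with $A$ the image of $S$ in a hypothetical enhancement and $f$ the image of $2\cdot\id_S$ (which satisfies $2\cdot f = 4\cdot\id_S$, and here I would instead set up the argument with $f$ itself so that $C(f)\cong S/2$ and the hypothesis becomes "$1\cdot f$ is already the map we coned", adjusting the statement of the lemma to the sharper version recorded in \cite[Section 7.6]{K3} which yields $2\cdot\id_{S/2}=0$), one gets $2\cdot\id_{S/2}=0$ in $\ho{\Spe}$, contradicting the topological computation. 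Hence $\ho{\Spe}$ has no enhancement.

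The main obstacle is getting the algebraic lemma stated in exactly the sharp form that contradicts stable homotopy: the naive version only gives $n^2\cdot\id_{C(f)}=0$, which is consistent with $4\cdot\id_{S/2}=0$ and does not yield a contradiction. One genuinely needs Schwede's refinement — that in an algebraic (equivalently, in the stronger sense, a "topological but with strict unit" setting fails, whereas algebraic succeeds) category the $2$-torsion is strict — and this requires a careful chain-level argument with the dg enhancement, exploiting that $\id_A$ is a strict (not merely homotopy) unit and that composition is strictly associative on the nose. I would therefore organize the proof around: (1) recall $2\cdot\id_{S/2}\neq 0$ in $\ho{\Spe}$ with a reference to the Steenrod-operation obstruction; (2) prove the dg lemma that in $\Ho(\cC)$ one has $2\cdot\id_{X/2}=0$ for any object $X$ of a pretriangulated $\cC$, by writing down the explicit null-homotopy using strict unitality and the functorial cone; (3) conclude by contradiction. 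This is precisely the argument of \cite[Section 7.6]{K3}, so in the survey it suffices to carry out (2) carefully and cite the topological input for (1).
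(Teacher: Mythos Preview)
Your proposal is correct and follows essentially the same argument as the paper: both use that $2\cdot\id_{S/2}\ne0$ in $\ho{\Spe}$ (via Steenrod operations) while in any algebraic triangulated category the cone of $2\cdot\id$ is killed by $2$. The paper streamlines your step (2) by invoking \autoref{algchar} to reduce to an embedding into some $\Kc(\cA)$ and then checking directly that $2\cdot\id$ on the mapping cone of $2\cdot\id_X$ is null-homotopic as a map of complexes---this elementary computation with explicit complexes bypasses your worry about needing a ``sharp'' dg lemma versus a ``naive $n^2$ version.''
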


\begin{proof}
By \autoref{algchar} it is enough to show that there is no exact and fully faithful functor
\[
\fF\colon\ho{\Spe}\to\Kc(\cA),
\]
where $\cA$ is an additive category.

Suppose that such a functor exists and let $S\in\ho{\Spe}$ be the sphere spectrum defined in \autoref{ex:spherespectrum}. Let moreover $S'$ be an object sitting in a distinguished triangle
\[
\xymatrix{
S\ar[r]^-{2\cdot\id_S}& S\ar[r]&S'
}
\]
in $\ho{\Spe}$. It is possible to show, using the Steenrod operations, that $2\cdot\id_{S'}\neq 0$ (see, for example, \cite{S2}).

On the other hand, let $X:=F(S)$ and $X':=\fF(S')$. As $\fF$ is exact, $X'$ is isomorphic in $\Kc(\cA)$ to the mapping cone $X''$ of $2\cdot\id_{X}$. It is now a simple exercise to show that the induced morphism $2\cdot\id_{X''}$ of complexes is null-homotopic. Hence $2\cdot\id_{X'}=0$ in $\Kc(\cA)$. This contradicts the fact that $\fF$ is faithful.
\end{proof}

\begin{remark}\label{rmk:nomodel}
A triangulated category $\cT$ is \emph{topological} if there is an exact equivalence between $\cT$ and the homotopy category of a stable cofibration category $\cC$ (see \cite{S1} for a detailed discussion). In this case $\cC$ can be regarded as a sort of `topological enhancement' of $\cT$. It is important to observe that every algebraic triangulated category is topological (see \cite[Proposition 3.2]{Sw}), but the converse is not true. Indeed, as the reader may suspect, the triangulated category $\ho{\Spe}$ is topological. On the other hand, the main result of \cite{MSS} shows that there is a $\ZZ$-linear triangulated category which is not topological.
\end{remark}

Nevertheless, the following is still a challenging problem.

\begin{qn}\label{qn:exfield}
Is there a triangulated category, linear over a field $\K$, which is not algebraic?
\end{qn}

\subsection{Counterexample to the uniqueness of dg enhancements}\label{subsect:nonuniqueness}

We assume $\K=\ZZ$. If $R$ is a quasi-Frobenius ring (meaning that $\Mod{R}$ is a Frobenius category), then $\Mod{R}$ has a model structure defined as follows (see \cite{DS} or \cite[Section 2.2]{Ho}): cofibrations are monomorphisms, fibrations are epimorphisms and weak equivalences are morphisms which become isomorphisms in $\St{\Mod{R}}$. Moreover, the homotopy category $\ho{\Mod{R}}$ coincides with $\St{\Mod{R}}$.

Let $p$ be a prime number and $\FF_p=\ZZ/p\ZZ$ the field with $p$ elements. It is well known that $R_1:=\ZZ/p^2\ZZ$ and $R_2:=\FF_p[\ep]$ (where $\ep^2=0$) are quasi-Frobenius rings (in both cases a module is injective if and only if it is projective if and only if it is free). We denote by $\cTr$ the category $\Mod{\FF_p}$ endowed with the triangulated structure defined by $[1]=\id$ (and, necessarily, distinguished triangles given by triangles inducing long exact sequences). We recall the following important result (see \cite{Sc}, \cite{DS}).

\begin{thm}\label{thm:DS}
The model categories $\Mod{R_1}$ and $\Mod{R_2}$ are not Quillen equivalent, but there is an exact equivalence between $\ho{\Mod{R_i}}$ and $\cTr$, for $i=1,2$.
\end{thm}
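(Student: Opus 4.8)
The plan is to split the statement into its two independent assertions. First I would establish the exact equivalences $\ho{\Mod{R_i}}\iso\cTr$ for $i=1,2$, and then—separately—show that the model categories $\Mod{R_1}$ and $\Mod{R_2}$ are not Quillen equivalent. The point of the theorem is precisely that these two facts coexist: the homotopy categories agree as triangulated categories, yet the ambient model-categorical (equivalently, dg/higher) structures do not, which is what ultimately produces a triangulated category with non-unique enhancements.

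For the equivalences $\ho{\Mod{R_i}}\iso\cTr$, the starting point is that $\ho{\Mod{R_i}}=\St{\Mod{R_i}}$ since $R_i$ is quasi-Frobenius, as recalled just before the statement. So it suffices to identify the stable module category $\St{\Mod{R_i}}$ with $\Mod{\FF_p}$ equipped with the triangulated structure $[1]=\id$. Concretely, for $R_1=\ZZ/p^2\ZZ$ one sends an $R_1$-module $M$ to $pM\subseteq M$ (or equivalently to $M/pM$, or to $\ker(p\colon M\to M)$, which one checks are all naturally isomorphic as $\FF_p$-vector spaces after killing projectives), and one verifies that this functor kills the free (= injective = projective) modules and induces an equivalence onto $\Mod{\FF_p}$; for $R_2=\FF_p[\ep]$ the analogous functor is $M\mapsto \ep M$. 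The key computation is that the shift functor on $\St{\Mod{R_i}}$, defined via short exact sequences $0\to M\to I\to M[1]\to 0$ with $I$ free, becomes the identity under this identification: one takes $I$ to be a minimal free cover, computes $M[1]$ as the kernel, and checks it is stably isomorphic to $M$ itself, using that every module over $R_i$ is a direct sum of copies of $R_i$ and $\FF_p$ (these rings have finite representation type of a very simple kind). One should also check that the distinguished triangles match, but since $[1]=\id$ forces distinguished triangles to be exactly those inducing long exact sequences, this follows from the exactness properties built into the stable-category triangulation.

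For the non-Quillen-equivalence of $\Mod{R_1}$ and $\Mod{R_2}$, this is the substantive part and is genuinely an import from the literature (\cite{Sc}, \cite{DS})—I would cite it rather than reprove it, but the idea I would sketch is this: a Quillen equivalence would in particular induce an equivalence of the associated homotopy categories respecting all the derived structure, in particular the action of the (graded) homotopy ring of the unit and the secondary operations (Toda brackets / Massey products) that a mere triangulated equivalence need not see. The discrepancy is detected at the level of these higher operations: over $R_1=\ZZ/p^2\ZZ$ the relevant bracket $\langle p, p, p\rangle$ (or the analogous higher structure on $\FF_p$ viewed in $\St{\Mod{R_1}}$) is nonzero, essentially because multiplication by $p$ on $\ZZ/p^2\ZZ$ does not lift compatibly, whereas over $R_2=\FF_p[\ep]$ the corresponding bracket vanishes since everything is already $\FF_p$-linear (or vice versa, depending on conventions); this asymmetry obstructs any Quillen equivalence.

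The hard part, and the reason I would lean on the cited sources here rather than grind it out, is precisely this last step: extracting a genuinely model-categorical (or $A_\infty$/dg) invariant that distinguishes $\Mod{R_1}$ from $\Mod{R_2}$ while being invisible to the underlying triangulated category $\cTr$. The equivalences $\ho{\Mod{R_i}}\iso\cTr$ are essentially a bookkeeping exercise in stable module categories once one has the quasi-Frobenius property in hand; it is the \emph{negative} statement—that no chain of Quillen equivalences can exist—that carries the real content and requires the obstruction-theoretic input of Schlichting's work, and which makes this example the source of the non-uniqueness phenomena advertised in \autoref{subsect:nonuniqueness}.
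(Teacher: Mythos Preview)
Your overall shape matches the paper exactly: the paper does not prove this theorem either, but cites \cite{Sc} and \cite{DS}, remarking that the equivalences $\ho{\Mod{R_i}}\iso\cTr$ are easy while the non-Quillen-equivalence is deep. Your sketch of the equivalences via the structure of $R_i$-modules is fine and in line with ``easy to see''.

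However, your proposed obstruction for the non-Quillen-equivalence is off. Toda brackets such as $\langle p,p,p\rangle$ are invariants of the \emph{triangulated} structure, so any exact equivalence $\St{\Mod{R_1}}\iso\cTr\iso\St{\Mod{R_2}}$ carries them to one another; they cannot distinguish two enhancements of the same triangulated category. (Concretely, in $\cTr$ the map $p$ on $\FF_p$ is zero, and the bracket $\langle 0,0,0\rangle$ has full indeterminacy.) The paper is explicit that the actual arguments are of a different nature: Schlichting's original proof in \cite{Sc} uses $K$-theoretic computations (the Waldhausen $K$-theories differ), and Dugger--Shipley's in \cite{DS} uses homotopy endomorphism ring \emph{spectra} --- genuinely stable-homotopy-theoretic invariants that refine the dg/$A_\infty$ level. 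Your parenthetical about ``higher structure'' and non-formality is closer to the truth (the endomorphism dg algebras $\cC_1$ and $\cC_2$ from \autoref{nonuni} are not quasi-isomorphic over $\ZZ$), but the triple-bracket heuristic as stated would not fly. If you want to keep an informal explanation, phrase it in terms of the endomorphism $A_\infty$-algebra of $\FF_p$ being formal over $R_2$ (everything is $\FF_p$-linear) but not over $R_1$, rather than in terms of Toda brackets.
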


Recall that a \emph{Quillen adjunction} between two model categories $\cC_1$ and $\cC_2$ is given by two adjoint functors $\adj{\fF}{\fG}{\cC_1}{\cC_2}$ such that $\fF$ preserves cofibrations and trivial cofibrations (or, equivalently, $\fG$ preserves fibrations and trivial fibrations). By definition, a trivial (co)fibration is a morphism which is both a (co)fibration and a weak equivalence. A Quillen adjunction as above induces two adjoint functors $\adj{\ld\fF}{\rd\fG}{\ho{\cC_1}}{\ho{\cC_2}}$ between the homotopy categories, and it is called a \emph{Quillen equivalence} if $\ld\fF$ (or $\rd\fG$) is an equivalence of categories. Finally, two model categories are \emph{Quillen equivalent} if they are related by a zigzag of Quillen equivalences.

Back to the statement of \autoref{thm:DS}, it is easy to see that there are exact equivalences between the homotopy categories and $\cTr$. On the other hand, the fact that the two model categories are not Quillen equivalent is a deep result, whose original proof in \cite{Sc} uses rather involved $K$-theoretical computations. Also the simpler proof in \cite{DS} is far from trivial and uses homotopy endomorphism ring spectra.

Denoting by $\Cdgai(\Mod{R_i})$ the full dg subcategory of $\Cdg(\Mod{R_i})$ with objects the acyclic complexes with injective components (for $i=1,2$), we obtain the following counterexample to the uniqueness of enhancements (mentioned in \cite[Remark 2.3]{S}).

\begin{cor}\label{nonuni}
The pretriangulated dg categories $\Cdgai(\Mod{R_1})$ and $\Cdgai(\Mod{R_2})$ are not isomorphic in $\Hqe$, but both of them are enhancements of $\cTr$.
\end{cor}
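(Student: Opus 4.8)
\emph{First I would check that both $\Cdgai(\Mod{R_i})$ are enhancements of $\cTr$.} Since $R_i$ is quasi-Frobenius, $\Mod{R_i}$ is a Frobenius category, so the assignment $C\mapsto Z^0(C)$ gives an exact equivalence $\Kai(\Mod{R_i})\iso\St{\Mod{R_i}}$, exactly as in the proof of \autoref{algchar}. Combining this with the stable model structure on $\Mod{R_i}$ (for which $\ho{\Mod{R_i}}=\St{\Mod{R_i}}$) and with the exact equivalence $\ho{\Mod{R_i}}\iso\cTr$ of \autoref{thm:DS}, one gets an exact equivalence $\Kai(\Mod{R_i})\iso\cTr$. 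Now $\Cdgai(\Mod{R_i})$ is a full dg subcategory of $\Cdg(\Mod{R_i})$ with $\Ho(\Cdgai(\Mod{R_i}))=\Kai(\Mod{R_i})$, and the class of acyclic complexes with injective components is stable under shift and under the cone of any closed degree zero morphism (the components of such a cone are finite direct sums of injectives, hence injective, and the cone of a morphism between acyclic complexes is acyclic); therefore $\Kai(\Mod{R_i})$ is a triangulated subcategory of $\Kc(\Mod{R_i})$ and $\Cdgai(\Mod{R_i})$ is pretriangulated. Hence $\Cdgai(\Mod{R_i})$ is an enhancement of $\cTr$.

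\emph{For the non-uniqueness}, the plan is to reconstruct each stable model category $\Mod{R_i}$, up to Quillen equivalence, from the dg category $\Cdgai(\Mod{R_i})$, so that an isomorphism in $\Hqe$ between the two enhancements would contradict the first assertion of \autoref{thm:DS}. Since $\FF_p$ is a compact generator of $\Ho(\Cdgai(\Mod{R_i}))\iso\cTr$, a complete injective resolution $G_i$ of $\FF_p$ inside $\Cdgai(\Mod{R_i})$ is a compact generator of $\dgD(\Cdgai(\Mod{R_i}))$; let $E_i$ be its endomorphism dg algebra, a Tate-type dg algebra determined up to derived Morita equivalence. An isomorphism $\Cdgai(\Mod{R_1})\iso\Cdgai(\Mod{R_2})$ in $\Hqe$ sends $G_1$ to a compact generator of $\dgD(\Cdgai(\Mod{R_2}))$ and thus induces an isomorphism $\hproj{E_1}\iso\hproj{E_2}$ in $\Hqe$, hence a Quillen equivalence between $\dgMod{E_1}$ and $\dgMod{E_2}$. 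On the other hand — and here Schwede's techniques enter, see \cite{Sc} and \cite[Remark 2.3]{S} — the ``algebraic'' stable model category $\Mod{R_i}$, being enriched over complexes compatibly with its triangulated structure and admitting $\FF_p$ as a compact generator, is itself Quillen equivalent to $\dgMod{E_i}$. Concatenating these equivalences would give a Quillen equivalence between $\Mod{R_1}$ and $\Mod{R_2}$, contradicting \autoref{thm:DS}.

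\emph{The main obstacle} is exactly the last step above: the ``algebraicity'' statement realizing the stable model structure on $\Mod{R_i}$ as a model category of dg modules over the Tate dg algebra $E_i$ of $\FF_p$. This is what forces the dg enhancement to remember the Quillen equivalence type of $\Mod{R_i}$, and it relies on Schwede's machinery rather than on anything developed earlier in this paper; by comparison, the checks in the first step (identification of the homotopy categories, pretriangulatedness) are routine. Alternatively one can bypass \autoref{thm:DS} and argue directly that $E_1$ and $E_2$ are not derived Morita equivalent — their graded cohomology rings coincide, but the induced $A_\infty$-structures do not, the one coming from $\ZZ/p^2\ZZ$ being non-formal (ultimately because $\ZZ/p^2\ZZ$ is not an $\FF_p$-algebra) — though carrying this out is of comparable difficulty.
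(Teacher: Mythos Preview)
Your plan is essentially the paper's proof. The paper makes the object explicit --- $C_i$ is the complex with $R_i$ in every degree and differential multiplication by $p$ (resp.\ $\ep$), which is indeed a complete injective resolution of $\FF_p$ --- and observes directly that any induced equivalence $\Kai(\Mod{R_1})\iso\Kai(\Mod{R_2})$ must carry $C_1$ to $C_2$ (both correspond to $\FF_p$ in $\cTr$), giving an isomorphism of their endomorphism dg algebras $\cC_i$ in $\Hqe$; your detour through compact generation and derived Morita equivalence reaches the same conclusion. One correction: the step you correctly flag as the main obstacle and attribute to ``Schwede's machinery'' is actually \cite[Theorem~3.5]{DS} (Dugger--Shipley), not \cite{Sc} or \cite{S}; this is precisely what the paper invokes for the Quillen equivalence between $\Mod{R_i}$ and $\Zo(\dgMod{\cC_i})$.
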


\begin{proof}
For $i=1,2$, $\Cdgai(\Mod{R_i})$ is naturally an enhancement of $\Kai(\Mod{R_i})$. Since there are exact equivalences $\Kai(\Mod{R_i})\isomor\St{\Mod{R_i}}$ (see the proof of \autoref{algchar}) and $\St{\Mod{R_i}}\iso\ho{\Mod{R_i}}\isomor\cTr$ (by \autoref{thm:DS}), it follows that $\Cdgai(\Mod{R_i})$ is an enhancement of $\cTr$, for $i=1,2$.

Assume now that $\Cdgai(\Mod{R_1})\iso\Cdgai(\Mod{R_2})$ in $\Hqe$. Let $C_i\in\Cdgai(\Mod{R_i})$ be the complex of $R_i$-modules with $R_i$ in each degree and whose differential is multiplication by $p$ for $i=1$ and by $\varepsilon$ for $i=2$. It is not difficult to show that any equivalence between $\Kai(\Mod{R_i})$ and $\cTr$ must send $C_i$ to $\FF_p$, up to isomorphism (this can be seen, for instance, by checking that $\Kai(\Mod{R_i})(C_i,C_i)$ is a one-dimensional $\FF_p$-vector space). Hence $C_1$ corresponds to $C_2$, up to isomorphism, in the equivalence between $\Kai(\Mod{R_1})$ and $\Kai(\Mod{R_2})$ induced by the given isomorphism in $\Hqe$. Denoting by $\cC_i$ the full dg subcategory of $\Cdgai(\Mod{R_i})$ whose only object is $C_i$ (so that $\cC_i$ is actually a dg algebra), it follows that $\cC_1\iso\cC_2$ in $\Hqe$. This implies that $\Zo(\dgMod{\cC_1})$ and $\Zo(\dgMod{\cC_2})$ with their standard model structures (namely, weak equivalences are quasi-isomorphisms and fibrations are surjections) are Quillen equivalent (see, for instance, \cite[Corollary 3.4]{To}). On the other hand, by \cite[Theorem 3.5]{DS}, for $i=1,2$ there is a Quillen equivalence between $\Zo(\dgMod{\cC_i})$ and $\Mod{R_i}$ (taking into account that the complex of $R_i$-modules denoted by $P_{\bullet}M$ in \cite{DS} is just $C_i$ when $M=\FF_p$ as $R_i$-module). Thus we conclude that $\Mod{R_1}$ and $\Mod{R_2}$ are Quillen equivalent, contradicting \autoref{thm:DS}. Therefore $\Cdgai(\Mod{R_1})\not\iso\Cdgai(\Mod{R_2})$ in $\Hqe$.
\end{proof}

\begin{remark}\label{nonuniquot}
The above argument can be clearly adapted to prove that also the full (triangulated) subcategory $\cTrc$ of $\cTr$ whose objects are finite dimensional $\FF_p$-vector spaces does not admit a unique enhancement (the two different enhancements are of course given by the full dg subcategories of $\Cdgai(\Mod{R_i})$ whose objects correspond to those of $\cTrc$ under the equivalence between $\Kai(\Mod{R_i})$ and $\cTr$). Setting $X:=\spec(R_2)$, it is easy to see that there is an exact equivalence between $\cTrc$ and $\Db(X)/\Dp(X)$. This is easy to show using the classification of indecomposable objects in $\Db(X)$ (see \cite{AM} and \autoref{rmk:students} for the definition of indecomposable objects). As $\Db(X)$ and $\Dp(X)$ have a unique enhancement (see \autoref{rmk:coh} and \autoref{cor:geocomp}, respectively), this example proves that the property of admitting a unique enhancement is not stable under passage to Verdier quotients.
\end{remark}

\begin{remark}\label{nonunifield}
The category $\cTr$ is not only $\ZZ$-linear, but also $\FF_p$-linear. Clearly the same is true for $\Cdgai(\Mod{R_2})$, but not for $\Cdgai(\Mod{R_1})$, and for this reason we had to take $\K=\ZZ$.
\end{remark}

As in \autoref{qn:exfield}, it makes then sense to formulate the following.

\begin{qn}\label{qn:uniqenessfield}
Are there examples of triangulated categories, linear over a field $\K$, with non-unique $\K$-linear enhancement?
\end{qn}

Let $\cS$ be a full triangulated subcategory of a triangulated category $\cT$ . We can then formulate the following additional question.

\begin{qn}\label{qn:uniqloc}
Does $\cS$ have unique enhancements if $\cT$ does? What if $\cS$ is a localizing (in case $\cT$ has small coproducts) or an admissible subcategory of $\cT$?
\end{qn}

Recall that a full triangulated subcategory $\cS$ of a triangulated category $\cT$ is \emph{admissible} if the inclusion functor $\cS\mono\cT$ has right and left adjoints.

\begin{ex}\label{ex:curves}
There is a rather simple series of geometric examples where the question above can be answered positively, in the case of admissible subcategories. Indeed, let $C$ be a smooth projective curve over an algebraically closed field $\K$ and let $\Db(C)$ be the bounded derived category of coherent sheaves on $C$. Let $\cS$ be an admissible subcategory of $\Db(C)$.

It is known (see, for example, the proof of \cite[Proposition 6.11]{CS3}) that, if $\cS$ is not trivial or the whole $\Db(C)$, then $C=\PP^1$ and there is an exact equivalence $\cS\iso\Db(\spec(\K))$. As a simple consequence of what we will prove later (see \autoref{cor:geocomp}), we conclude that $\cS$ has a unique enhancement.
\end{ex}

\section{Uniqueness of enhancements in the unbounded case}\label{sect:uniqcat}

In this section we start analyzing the positive side of the story concerning the enhancements of triangulated categories. In particular, we focus on the enhancements of the unbounded derived category of a Grothendieck category. The result in this context follows almost directly from a rather general criterion which we discuss first. We spend some time explaining some geometric applications and some open problems.

\subsection{The statement}\label{subsect:statemunbounded}

All the results about the uniqueness of enhancements for the unbounded derived categories which will be discussed in this paper are simple consequences of a rather abstract result. But for this, we need the following.

\begin{definition}\label{abc}
	Let $\cT$ be a triangulated category with small coproducts. An exact functor $\fF\colon\dgD(\cA)\to\cT$ is \emph{right vanishing} if it preserves small coproducts and there exists a full subcategory $\cR$ of $\cT$ with the following properties:
	\begin{enumerate}
		\item[(R1)]\label{Rcoprod} $\cR$ is closed under small coproducts;
		\item[(R2)]\label{Rext} $\cR$ is closed under extensions (meaning that, if $X\to Y\to Z$ is a distinguished triangle in $\cT$ with $X,Z\in\cR$, then $Y\in\cR$, as well); 
		\item[(R3)]\label{Rrep} $\fF(\Yon(A))[k]\in\cR$ for every $A\in\cA$ and every integer $k<0$;
		\item[(R4)]\label{Rort} $\cT\left(\fF(\Yon(A)),R\right)=0$ for every $A\in\cA$ and every $R\in\cR$.
	\end{enumerate}
\end{definition}

Here we can regard $\Yon$ as a functor $\cA\to\dgD(\cA)$ thanks to the natural identification $\D(\Mod{\cA})\iso\dgD(\cA)$. The statement is as follows.

\begin{thm}[\cite{CS6,CS7}, Theorem C]\label{thm:crit1}
Let $\cA$ be a small category which we consider as a dg category concentrated in degree zero and let $\cL$ be a localizing subcategory of $\dgD(\cA)$ such that:
\begin{itemize}
\item[{\rm (a.1)}] The quotient $\dgD(\cA)/\cL$ is a well generated triangulated category;
\item[{\rm (b.1)}] The quotient functor $\fQ\colon\dgD(\cA)\to\dgD(\cA)/\cL$ is right vanishing.
\end{itemize}
Then $\dgD(\cA)/\cL$ has a unique enhancement.
\end{thm}

\autoref{thm:crit1} appeared in \cite{CS6} with (b.1) replaced by the weaker assumption
\[
\dgD(\cA)/\cL\left(\fQ(\Yon(A)),\coprod_{i\in I}\fQ(\Yon(A_i))[k_i]\right)=0,
\]
for all $A,A_i\in\cA$ (with $I$ a small set) and all integers $k_i<0$. The original proof contains a gap and the statement has been changed to the one above in \cite{CS7} where a complete proof is provided. It should be noted that the statement of \autoref{thm:crit1} and its proof (both the old one in \cite{CS6} and the new one in \cite{CS7}) are very much inspired by the following weaker result due to Lunts and Orlov.

\begin{thm}[\cite{LO}, Theorem 1]\label{thm:LO1}
Let $\cA$ be a small category which we consider as a dg category concentrated in degree zero and let $\cL$ be a localizing subcategory of $\dgD(\cA)$ such that the Verdier quotient $\dgD(\cA)/\cL$ is a category and:
\begin{itemize}
\item[{\rm (a.2)}] The functor $\fQ$ sends $\dgD(\cA)^c$ to $(\dgD(\cA)/\cL)^c$;
\item[{\rm (b.2)}] $\dgD(\cA)/\cL\left(\fQ(\Yon(A)),\fQ(\Yon(A'))[k]\right)=0$, for all $A,A'\in\cA$ and all integers $k<0$.
\end{itemize}
Then $\dgD(\cA)/\cL$ has a unique enhancement.
\end{thm}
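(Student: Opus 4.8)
The plan is to show, following Lunts and Orlov, that every enhancement of $\cT:=\dgD(\cA)/\cL$ is isomorphic in $\Hqe$ to one tautological model. I would fix that model first: lift $\cL$ to the full pretriangulated dg subcategory $\widetilde{\cL}\subseteq\hproj{\cA}$ whose objects are the h-projective $\cA$-dg modules lying in $\cL$ under $\Ho(\hproj{\cA})\iso\dgD(\cA)$, and put $\cC_0:=\hproj{\cA}/\widetilde{\cL}$; by \autoref{ex:derenh} and \autoref{pretr}~(ii) this is pretriangulated with $\Ho(\cC_0)\iso\dgD(\cA)/\cL=\cT$, so it is an enhancement. The first real input is that $\cT$ is \emph{compactly} generated: the representable dg modules $\Yon(A)$ are compact and generate $\dgD(\cA)$, so by {\rm (a.2)} the set $\cS:=\{\fQ(\Yon(A))\}_{A\in\cA}$ consists of compact objects, and since $\fQ$ commutes with coproducts and is surjective on objects, $\cS$ generates $\cT$. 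This is what lets one bypass the machinery of \autoref{sect:wellgen}: for \emph{any} enhancement $(\cC,\fE)$, choose $H_A\in\cC$ with $\fE(H_A)\iso\fQ(\Yon(A))$, let $\cC_{\cS}$ be the full dg subcategory of $\cC$ on the $H_A$, and recall (Keller's recognition theorem, see \cite{Ke1}) that the quasi-functor $\cC\to\hproj{\cC_{\cS}}$ of \autoref{ResSF} is then a quasi-equivalence. In particular $\dgD(\cC_{\cS})\iso\Ho(\hproj{\cC_{\cS}})\iso\cT$, and the hom-complexes of $\cC_{\cS}$ are ``connective'': $H^k(\cC_{\cS}(H_A,H_{A'}))\iso\cT(\fQ(\Yon(A)),\fQ(\Yon(A'))[k])$, which vanishes for $k<0$ by {\rm (b.2)}.

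The core step is to build a quasi-functor $\psi\colon\cA\to\cC_{\cS}$, bijective on objects via $A\mapsto H_A$, inducing on homotopy categories the corestriction of $\fE^{-1}\comp\fQ\comp\Yon$. I would construct $\psi$ as an $A_\infty$-functor from $\cA$ — an ordinary category, hence a trivial $A_\infty$-category — to the dg category $\cC_{\cS}$: its first component is any choice of lifts of the morphisms of $\cA$ to $0$-cocycles of the hom-complexes of $\cC_{\cS}$ representing the relevant classes, and the higher components are defined inductively, the obstruction to the $n$-th component being a class in $H^{2-n}(\cC_{\cS}(H_{A_0},H_{A_n}))$. For $n=2$ this vanishes because $\fQ\comp\Yon$ is a functor; for $n\geq3$ it vanishes by the connectivity above, i.e.\ precisely by {\rm (b.2)}. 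One then has to turn this $A_\infty$-functor into an honest morphism of $\Hqe$ (a quasi-representable bimodule), which is the technical heart and is done in the appendix of \cite{LO}. Applying $\Ind$ to a dg model of $\psi$, and using that $\Ind$ commutes with the Yoneda embeddings and preserves quasi-equivalences together with the quasi-equivalence $\cC\to\hproj{\cC_{\cS}}$, produces a quasi-functor $\Phi\colon\hproj{\cA}\to\cC$ with $\Phi(\Yon(A))\simeq H_A$ and $\Ho(\Phi)$ coproduct-preserving. Since $\fE\comp\Ho(\Phi)$ and $\fQ$ are coproduct-preserving exact functors $\dgD(\cA)\to\cT$ that are isomorphic on the compact generators $\Yon(\cA)$, one concludes $\fE\comp\Ho(\Phi)\iso\fQ$.

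It remains to descend $\Phi$ to the quotient. Because $\fQ$ annihilates $\cL=\Ho(\widetilde{\cL})$, the quasi-functor $\Phi$ sends every object of $\widetilde{\cL}$ to an object isomorphic to $0$ in $\cC$; hence by the universal property of the Drinfeld quotient (\cite{Dr}; cf.\ \autoref{pretr}~(ii)) it factors in $\Hqe$ as $\hproj{\cA}\to\cC_0=\hproj{\cA}/\widetilde{\cL}\mor{\overline{\Phi}}\cC$. The induced functor satisfies $\fE\comp\Ho(\overline{\Phi})\comp\fQ\iso\fQ$, and since $\fQ$ is surjective on objects this forces $\fE\comp\Ho(\overline{\Phi})\iso\id_{\cT}$; thus $\Ho(\overline{\Phi})$ is an exact equivalence and $\overline{\Phi}$ is a quasi-equivalence. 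As $\cC$ was an arbitrary enhancement, $\cT$ has a unique enhancement in the sense of \autoref{def:uniqueenh}.

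I expect the main obstacle to be this core step. Within it, the obstruction computation itself is formal once the $A_\infty$-formalism is in place, but two things are genuinely delicate: (i) passing from the resulting $A_\infty$-functor to a bona fide morphism in $\Hqe$, keeping track of bimodules and quasi-representability; and (ii) the ``agrees on compact generators, therefore isomorphic'' comparisons of exact functors, both on $\dgD(\cA)$ and in the final descent, which must be made carefully with the compactness supplied by {\rm (a.2)}. Everything else is formal manipulation with dg quotients and the equivalence $\Ho(\hproj{-})\iso\dgD(-)$.
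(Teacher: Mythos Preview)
Your direct argument parallels the Lunts--Orlov strategy, which is exactly what the paper sketches for the \emph{more general} \autoref{thm:crit1}. The paper's own treatment of \autoref{thm:LO1} is different and much shorter: it observes that {\rm (a.2)} implies {\rm (a.1)} (this is \autoref{lem:compar1}) and that {\rm (a.2)} together with {\rm (b.2)} implies {\rm (b.1)} (pull the coproduct out of the Hom using compactness of $\fQ(\Yon(A))$), and then simply invokes \autoref{thm:crit1}. Regarding the construction of $\psi$, the $A_\infty$-obstruction theory is unnecessary: since $H^k\bigl(\cC_\cS(H_A,H_{A'})\bigr)=0$ for $k<0$ by {\rm (b.2)}, the dg functor $\tau_{\le0}\cC_{\cS}\to\Ho(\cC_{\cS})$ is already a quasi-equivalence (this is the argument of \autoref{lem:tau}), and the zigzag $\cA\to\Ho(\cC_{\cS})\leftarrow\tau_{\le0}\cC_{\cS}\to\cC_{\cS}$ produces $\psi$ in $\Hqe$ directly. (Incidentally, the appendix of \cite{LO} concerns set-theoretic issues, not $A_\infty$-rectification.)

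There is, however, a genuine gap in your endgame. The inference ``$\fE\comp\Ho(\Phi)$ and $\fQ$ are coproduct-preserving, exact, and naturally isomorphic on the compact generators $\Yon(\cA)$, hence $\fE\comp\Ho(\Phi)\iso\fQ$'' is not valid at the triangulated level: a natural isomorphism on a generating set need not extend to one on the whole category, precisely because extension along cones is where triangulated functoriality fails. You flag this as delicate, but it is not a matter of bookkeeping with compactness; neither the paper's sketch nor \cite{LO} attempts such an extension. Instead one proves separately that $\Ho(\Phi)$ annihilates $\cL$ (this is \cite[Lemma~4.4]{CS6}, and it is here that the vanishing hypothesis does real work) and then that the induced $\Ho(\overline{\Phi})$ is an equivalence (\cite[Proposition~4.5]{CS6}), without ever asserting $\fE\comp\Ho(\Phi)\iso\fQ$. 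Your final descent has a related slip: essential surjectivity of $\fQ$ alone does not yield $\fE\comp\Ho(\overline{\Phi})\iso\id_\cT$ from $\fE\comp\Ho(\overline{\Phi})\comp\fQ\iso\fQ$; the correct reason would be that precomposition with a Verdier quotient functor is fully faithful on exact functors --- but you do not actually have that premise, since the preceding isomorphism is not established.
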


A comparison between the two results is in order here.

\begin{lem}\label{lem:compar1}
Let $\cA$ be a small category and let $\cL$ be a localizing subcategory of $\dgD(\cA)$ such that the Verdier quotient $\dgD(\cA)/\cL$ is a category and {\rm (a.2)} holds. Then $\dgD(\cA)/\cL$ is compactly generated and so {\rm (a.1)} holds true.
\end{lem}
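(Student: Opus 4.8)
The plan is to exhibit an explicit small set of compact generators of $\dgD(\cA)/\cL$, namely the images under $\fQ$ of the representable modules; once $\dgD(\cA)/\cL$ is compactly generated it is in particular well generated, which is exactly (a.1).

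First I would recall that, under the identification $\D(\Mod{\cA})\iso\dgD(\cA)$, the representable modules are compact objects of $\dgD(\cA)$ and generate it. Concretely, for every complex $M$ of $\cA$-modules and every $A\in\cA$ one has $\dgD(\cA)(\Yon(A),M[j])\iso H^j(M(A))$, because $\Yon(A)$ is a projective $\cA$-module; hence this group commutes with small coproducts in $M$ (so $\Yon(A)$ is compact), and its vanishing for all $A$ and all $j\in\ZZ$ forces $M(A)$ to be acyclic for every $A$, i.e.\ $M\iso0$. Letting $\cS$ denote the set $\{\Yon(A)\}$ with $A$ ranging over a set of representatives of the isomorphism classes of objects of $\cA$ (a small set, since $\cA$ is small), we conclude that $\cS$ is a small set of compact objects satisfying {\rm (G1)} in $\dgD(\cA)$.

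Next I would transport {\rm (G1)} to the quotient. By hypothesis $\dgD(\cA)/\cL$ is a category, hence has small Hom-sets; since $\dgD(\cA)$ has small coproducts and $\cL$ is localizing, the discussion in \autoref{subsect:wellquot} applies and $\fQ$ has a fully faithful right adjoint $\fQ^R$ (necessarily exact, being a right adjoint between triangulated categories). Suppose $X\in\dgD(\cA)/\cL$ satisfies $\dgD(\cA)/\cL(\fQ(\Yon(A)),X[j])=0$ for all $A\in\cA$ and all $j\in\ZZ$. By adjunction, and since $\fQ^R$ commutes with shifts, $\dgD(\cA)(\Yon(A),\fQ^R(X)[j])=0$ for all such $A$ and $j$, so $\fQ^R(X)\iso0$ by {\rm (G1)} in $\dgD(\cA)$; as $\fQ^R$ is fully faithful the counit gives $X\iso\fQ(\fQ^R(X))\iso0$. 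Therefore $\fQ(\cS)$ satisfies {\rm (G1)} in $\dgD(\cA)/\cL$.

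Finally, hypothesis {\rm (a.2)} says precisely that each $\fQ(\Yon(A))$, being the image of a compact object, is compact in $\dgD(\cA)/\cL$; thus $\fQ(\cS)$ is a small set of compact objects satisfying {\rm (G1)}, and by the observation in \autoref{subsect:introwell} that for a set of compact objects {\rm (G1)} automatically yields {\rm (G2)} and {\rm (G3)}, the category $\dgD(\cA)/\cL$ is compactly generated by $\fQ(\cS)$, hence in particular well generated. This is {\rm (a.1)}. I do not expect a genuine obstacle: the hypotheses are used only through the existence of $\fQ^R$ (which requires $\dgD(\cA)/\cL$ to be a category) and through {\rm (a.2)} (which upgrades the generators to compact ones); the rest of the argument is formal.
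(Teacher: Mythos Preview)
Your proof is correct and follows essentially the same approach as the paper: both show that $\fQ(\cS)$, with $\cS=\{\Yon(A):A\in\cA\}$, is a small set of compact generators of $\dgD(\cA)/\cL$, using the fully faithful right adjoint $\fQ^R$ to transport {\rm (G1)} from $\dgD(\cA)$ to the quotient and invoking {\rm (a.2)} for compactness. Your write-up is simply more explicit about why the representables are compact generators upstairs and why $\fQ^R(X)\iso0$ forces $X\iso0$.
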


\begin{proof}
The objects in $\Yon(\cA)$ form a set of compact generators for $\dgD(\cA)$. Thus, we just need to show that, under the hypotheses, the category $\dgD(\cA)/\cL$ is compactly generated by the compact objects in $\fQ\comp\Yon(\cA)$. For this, assume that $B$ in $\dgD(\cA)/\cL$ is such that $\dgD(\cA)/\cL(\fQ(\Yon(A)),B[j])=0$, for all $A\in\cA$ and all integers $j\in\ZZ$. As $\fQ$ has a fully faithful right adjoint $\fQ^R$ (see the discussion at the beginning of \autoref{subsect:wellquot}), we have
\[
\dgD(\cA)/\cL(\fQ(\Yon(A)),B[j])\iso\dgD(\cA)(\Yon(A),\fQ^R(B)[j])=0,
\]
for all $A\in\cA$ and all integers $j\in\ZZ$. Since $\Yon(\cA)$ is a set of compact generators for $\dgD(\cA)$, we have $\fQ^R(B)\iso 0$. Hence $B\iso 0$.
\end{proof}

Let us now observe that (a.2) and (b.2) together imply (b.1). Indeed, it is easy to see that $\fQ$ is right vanishing, taking $\cR$ to be the full subcategory of $\dgD(\cA)/\cL$  consisting of those $R$ such that $\dgD(\cA)/\cL\left(\fQ(\Yon(A)),R\right)=0$ for every $A\in\cA$. Hence \autoref{thm:crit1} implies \autoref{thm:LO1}.

To conclude that the first result really improves the second one, we just need to exhibit a triangulated category of the form  $\dgD(\cA)/\cL$, with $\cA$ and $\cL$ as above, which satisfies (a.1) and (b.1) but not (a.2).

To this end, consider the triangulated category $\D(\cG)$, with $\cG$ a Grothendieck category. We know that there is an exact equivalence
\begin{equation}\label{eqn:equiv}
\D(\cG)\iso\D(\Mod{A})/\cL,
\end{equation}
where $A$ is the endomorphism ring of a generator $G$ of $\cG$, and that the quotient category is well generated (see \autoref{ex:alphacompgen}). As $\D(\Mod{A})\iso\dgD(A)$, where we think of $A$ as a category with one object, we see that (a.1) holds. As explained in \autoref{rmk:gc}, up to isomorphism, $\fQ(\Yon(A))$ is sent precisely to $G$ under \eqref{eqn:equiv}. Thus, since there are no negative Hom's between objects of $\cG$, (b.2) holds. Moreover, take $\cR$ to be the full subcategory of $\dgD(\cA)/\cL\cong\D(\Mod{A})/\cL$ whose objects are sent, under the equivalence \eqref{eqn:equiv}, to objects of $\D(\cG)$ with cohomologies in strictly positive degree. With this choice, $\fQ$ is right vanishing and (b.1) holds true. On the other hand, (a.2) does not hold, by \autoref{lem:compar1}, whenever $\D(\cG)$ is not compactly generated (for instance, when $\cG=\Sh(X)$ as in \autoref{ex:noncomp}).

\subsection{The idea of the proof of \autoref{thm:crit1}}\label{subsect:proofunbounded}

Roughly speaking, the proof of \autoref{thm:crit1} consists of two main steps, which are very much inspired by the approach in \cite{LO}, with some major changes. The first one is of pure dg flavor and makes use both of (a.1) and (b.1). The second one, much more technical, has a triangulated nature and uses the hypothesis (b.1). Here we will mainly concentrate on the first step.

\subsubsection*{Defining the functor} Let $\fE\colon\dgD(\cA)/\cL\to\Ho(\cC)$ be an exact equivalence, for some pretriangulated dg category $\cC$ and take the composition
\[
\xymatrix{
\fH\colon\cA\ar[r]^-{\Yon}&\dgD(\cA)\ar[r]^-{\fQ}&\dgD(\cA)/\cL\ar[r]^-{\fE}&\Ho(\cC)
}
\]
of functors. Let us denote by $\cB_0$ the full dg subcategory of $\cC$ such that the objects of $\cB_0$ are those in the image of $\fH$. Hence we can clearly regard $\fH$ as a dg functor $\cA\to\Ho(\cB_0)$, where $\cA$ and $\Ho(\cB_0)$ are thought of as dg categories concentrated in degree zero.

\subsubsection*{The truncation} Let $\tau_{\leq 0}(\cB_0)$ be the dg category with the same objects as $\cB_0$, while
\[
\tau_{\le0}(\cB_0)\left(B_1,B_2\right):=\tau_{\le0}\left(\cB_0\left(B_1,B_2\right)\right).
\]
for every $B_1$ and $B_2$ in $\cB_0$. There are obvious dg functors $\tau_{\le0}(\cB_0)\to\Ho(\cB_0)$ and $\tau_{\le0}(\cB_0)\to\cB_0$.

\begin{lem}\label{lem:tau}
The dg functor $\tau_{\le0}(\cB_0)\to\Ho(\cB_0)$ is a quasi-equivalence.
\end{lem}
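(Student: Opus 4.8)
The plan is to reduce the statement to a vanishing that is already guaranteed by hypothesis (b.1). Since the dg functor $\tau_{\le0}(\cB_0)\to\Ho(\cB_0)$ is the identity on objects, hence bijective on objects, it will be a quasi-equivalence as soon as it is a quasi-isomorphism on every Hom-complex: full faithfulness of the induced functor $\Ho(\tau_{\le0}(\cB_0))\to\Ho(\cB_0)$ is then formal (take $H^0$), and essential surjectivity is obvious. So the whole problem reduces to checking, for all $B_1,B_2\in\cB_0$, that the chain map $\tau_{\le0}(\cB_0)(B_1,B_2)\to\Ho(\cB_0)(B_1,B_2)$ is a quasi-isomorphism.

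Next I would unwind what this map is. Writing $C:=\cB_0(B_1,B_2)$, which equals $\cC(B_1,B_2)$ because $\cB_0$ is a \emph{full} dg subcategory of $\cC$, the map in question is the canonical morphism $\tau_{\le0}(C)\to H^0(C)$, where $H^0(C)$ is regarded as a complex concentrated in degree $0$. By the very definition of $\tau_{\le0}$, this morphism induces the identity on $H^0$, is trivially an isomorphism on $H^j$ for $j>0$ (both sides vanish), and is the map $H^j(C)\to0$ on $H^j$ for $j<0$. Hence it is a quasi-isomorphism precisely when $H^j(C)=0$ for every $j<0$.

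Finally I would verify this negative vanishing. Every object of $\cB_0$ is by construction of the form $\fH(A)=\fE(\fQ(\Yon(A)))$ for some $A\in\cA$, so write $B_i=\fE(\fQ(\Yon(A_i)))$ with $A_1,A_2\in\cA$. Since $\cC$ is pretriangulated, there is for each $j\in\ZZ$ a natural isomorphism $H^j(C)=H^j(\cC(B_1,B_2))\iso\Ho(\cC)(B_1,B_2[j])$; and since $\fE\colon\dgD(\cA)/\cL\to\Ho(\cC)$ is an exact equivalence it commutes with the shift and is fully faithful, so $\Ho(\cC)(B_1,B_2[j])\iso\dgD(\cA)/\cL\left(\fQ(\Yon(A_1)),\fQ(\Yon(A_2))[j]\right)$. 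By hypothesis (b.1) applied with a one-element index set (equivalently, by its special case (b.2)), the last group is $0$ for all $j<0$, which completes the argument.

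I do not expect a genuine obstacle here: this lemma is elementary, and the only real ingredient is hypothesis (b.1). The one point that requires a little care is bookkeeping — remembering that $\cB_0$ is full (so Hom-complexes in $\cB_0$ are Hom-complexes in $\cC$) and that its objects arise from $\cA$ through $\fH$, so that (b.1) genuinely applies. The substantive difficulties in the proof of \autoref{thm:crit1} lie in the subsequent, more technical triangulated step, not in this truncation lemma.
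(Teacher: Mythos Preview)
Your proof is correct and follows exactly the same approach as the paper's own proof, only with more detail: the paper simply states that it suffices to show $\Ho(\cC)(B_1,B_2[j])=0$ for all $B_1,B_2\in\cB_0$ and all $j<0$, and that this follows from (b.1) together with $\fE$ being an equivalence. Your write-up unpacks precisely why this vanishing is equivalent to $\tau_{\le0}(\cB_0)\to\Ho(\cB_0)$ being a quasi-equivalence and how (b.1) yields it.
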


\begin{proof}
It is enough to show that $\Ho(\cC)\left(B_1,B_2[j]\right)=0$, for all $B_1,B_2\in\cB_0$ and all $j<0$. This follows from (b.1) and the fact that $\fE$ is an equivalence.
\end{proof}

In conclusion, we get a quasi-functor $\Ho(\cB_0)\to\cB_0$.

\subsubsection*{Using that $\dgD(\cA)/\cL$ is well generated} Since $\fE$ is an exact equivalence and we assume (a.1), we have that $\Ho(\cC)$ is a well generated triangulated category. By \cite[Theorem 2.8]{CS6}, there exist a regular cardinal $\alpha$ and a small and full dg subcategory $\cB$ of $\cC$ containing $\cB_0$ such that $\Ho(\cB)$ is closed under $\alpha$-coproducts and an exact equivalence 
\begin{equation}\label{eqn:al}
\fPo\colon\Ho(\cC)\to\dgD_\alpha(\cB)
\end{equation}
which is induced by the natural quasi-functor $\cC\to\hproj{\cB}$ (see \autoref{ResSF}), whose image is actually contained in the enhancement $\hproja{\cB}$ of $\dgD_\alpha(\cB)$.

Let us give a bit more details about this. Recall that, following \cite{P}, we denote by $\dgD_\alpha(\cB)$ the
\emph{$\alpha$-continuous derived category} of $\cB$. By definition, it is
the full subcategory of $\dgD(\cB)$ with objects those $M\in\dgMod{\cB}$ such that the natural map
\[
(H^*(M))\Bigl(\Plus_{i\in I}C_i\Bigr)\longrightarrow\prod_{i\in I}(H^*(M))(C_i)
\]
(where the coproduct is intended in $\Ho(\cB)$) is an isomorphism, for all objects $C_i\in\cB$, with $\card{I}<\alpha$.
Clearly, $\dgD_\alpha(\cB)$ has an obvious enhancement $\hproja{\cB}$ given as the full dg subcategory of $\hproj{\cB}$ whose objects correspond to those in  $\dgD_\alpha(\cB)$, under the equivalence $\Ho(\hproj{\cB})\iso\dgD(\cB)$. Analyzing more carefully \cite{P} (see \cite[Section 2.2]{CS6}), one sees that there is a natural quasi-functor
\begin{equation}\label{eqn:alphaquot}
\hproj{\cB}\to\hproja{\cB}.
\end{equation}

In conclusion, we compose $\fH\colon\cA\to\Ho(\cB_0)$ with the quasi-functor $\Ho(\cB_0)\to\cB_0$ and the natural inclusion $\cB_0\mono\cB$. This provides a quasi-functor $\fH'\colon\cA\to\cB$ which induces yet another quasi-functor
\[
\xymatrix{
\fG\colon\hproj{\cA}\ar[rr]^-{\Ind(\fH')}&&\hproj{\cB}\ar[r]&\hproja{\cB},
}
\]
where the arrow on the right is just \eqref{eqn:alphaquot}. By passing to the homotopy categories, we finally have also the exact functor
\[
\fF:=\Ho(\fG)\colon\dgD(\cA)\longrightarrow\dgD_\alpha(\cB).
\]

\subsubsection*{The triangulated side of the story} Once the quasi-functor is constructed, there is some hard work to be done to solve two a priori non-trivial issues involving $\fF$.
\begin{itemize}
\medskip
\item[(1)] One first proves that $\fF$ factors through the quotient $\dgD(\cA)\to\dgD(\cA)/\cL$. This is done in \cite[Lemma 4.4]{CS6} and the proof heavily depends on the assumption (b.1). The output is another exact functor
\[
\fF'\colon\dgD(\cA)/\cL\longrightarrow\dgD_\alpha(\cB)
\]
which is again induced by a quasi-functor $\hproj{\cA}/\cL'\to\hproja{\cB}$. Here $\cL'$ is the full dg subcategory of $\hproj{\cA}$ corresponding to $\cL$ under the equivalence $\Ho(\hproj{\cA})\iso\dgD(\cA)$. 
\medskip
\item[(2)] Finally, one proves that $\fF'$ is an equivalence (see \cite[Proposition 4.5]{CS6}). The details cannot be provided here. One key feature that we would like to bring to the attention of the reader is that the objects in the image $\cS$ of the exact functor $\fF'\comp\fQ\comp\Yon$ do not form, in general, a set of $\alpha$-compact generators for $\dgD_\alpha(\cB)$ (see the discussion before \autoref{prop:inducedgen2}). But the objects of $\cS$ generate $\dgD_\alpha(\cB)$ (see \autoref{lem:inducedgen} and \autoref{rmk:gen}).
\end{itemize} 
\medskip
At this point, one has an invertible quasi-functor $\hproj{\cA}/\cL'\to\hproja{\cB}$ which, composed with the inverse of the invertible quasi-functor inducing \eqref{eqn:al}, yields an invertible quasi-functor $\hproj{\cA}/\cL'\to\cC$. Hence the dg enhancement $\cC$ is isomorphic in $\Hqe$ to the `standard' dg enhancement $\hproj{\cA}/\cL'$.

\subsection{The applications}\label{subsect:applicunbounded}

The last paragraph of \autoref{subsect:statemunbounded} shows that \autoref{thm:crit1} has the following consequence.

\begin{thm}[\cite{CS6}, Theorem A]\label{thm:main1}
If $\cG$ is a Grothendieck category, then $\D(\cG)$ has a unique enhancement.
\end{thm}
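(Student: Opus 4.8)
The plan is to derive \autoref{thm:main1} directly from \autoref{thm:crit1} by exhibiting $\D(\cG)$ as a Verdier quotient of the form $\dgD(\cA)/\cL$ satisfying hypotheses (a.1) and (b.1). First I would reduce to a convenient presentation of $\cG$: as recalled in \autoref{ex:alphacompgen}, after replacing the given small set of generators by their coproduct we may assume $\cG$ has a single generator $G$, and setting $A:=\cG(G,G)$ we have an exact equivalence $\D(\cG)\iso\D(\Mod{A})/\cL$ for a suitable localizing subcategory $\cL\subseteq\D(\Mod{A})$. Here $A$ is viewed as a $\K$-linear category with one object, so $\Mod{A}$ is exactly the category of $\K$-linear functors from $A\opp$ considered in the Notation, and there is a natural identification $\D(\Mod{A})\iso\dgD(A)$ (the derived category of $A$ regarded as a dg category concentrated in degree zero). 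So with $\cA:=A$ we are precisely in the setting of \autoref{thm:crit1}, and it remains only to verify (a.1) and (b.1).

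For (a.1) I would simply invoke the fact, due to Neeman \cite{N3} and already cited in \autoref{ex:alphacompgen}, that $\D(\cG)$ is well generated; since $\dgD(\cA)/\cL\iso\D(\cG)$ this is exactly (a.1). For (b.1), the key observation — made already in \autoref{rmk:gc} and reused in the discussion after \autoref{lem:compar1} — is that under the equivalence $\dgD(\cA)/\cL\iso\D(\cG)$ the object $\fQ(\Yon(A))$ corresponds, up to isomorphism, to the generator $G\in\cG$ (viewed as an object of $\D(\cG)$ concentrated in degree zero). Therefore $\fQ(\Yon(A))$ and all the $\fQ(\Yon(A_i))$ are honest objects of the abelian category $\cG$ sitting in cohomological degree zero. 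Since $\D(\cG)$ admits small coproducts, $\coprod_{i\in I}\fQ(\Yon(A_i))[k_i]$ is a complex whose cohomology is concentrated in degrees $-k_i$; all of these degrees are strictly positive when each $k_i<0$, whereas $G$ lives in degree zero. An object concentrated in degree $0$ of a derived category has no negative self-extensions into a complex with cohomology only in positive degrees: concretely, $\D(\cG)(G, M[k])=0$ whenever $M$ has cohomology in degrees $\geq 1$ and $k\geq 0$, which one sees either by a truncation argument or directly since $\Hom$ out of a degree-zero object into a complex supported in positive degrees vanishes in non-positive total degree. Applying this with $M=\coprod_i \fQ(\Yon(A_i))[k_i+\min_j k_j]$, or simply arguing termwise after commuting $\D(\cG)(G,-)$ with the coproduct (valid because $G$ may be taken compact, or because of the general adjunction argument in the proof of \autoref{lem:compar1}), gives (b.1).

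Having checked (a.1) and (b.1), \autoref{thm:crit1} yields that $\dgD(\cA)/\cL$, hence $\D(\cG)$, has a unique enhancement, which is the assertion of \autoref{thm:main1}. I do not expect any genuine obstacle here: the real content lies entirely in \autoref{thm:crit1} (and upstream in \cite{CS6}), and the present deduction is a matter of correctly assembling the standard presentation of $\D(\cG)$ from \autoref{ex:alphacompgen} together with the identification of $\fQ(\Yon(A))$ with the generator $G$ from \autoref{rmk:gc}. The only point that deserves a line of care is the vanishing in (b.1): one must make sure the coproduct is handled correctly, but this is immediate once one notes that the relevant $\Hom$-functor commutes with coproducts on the generator, or alternatively that a uniform shift reduces the infinite coproduct to a single complex with cohomology in positive degrees. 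This is exactly the argument used in the paragraph following \autoref{lem:compar1} to observe that (b.1) and (b.2) hold for $\D(\cG)$.
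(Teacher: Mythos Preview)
Your proposal is correct and follows exactly the route the paper indicates in the last paragraph of \autoref{subsect:statemunbounded}: present $\D(\cG)$ as $\dgD(A)/\cL$ via \autoref{ex:alphacompgen}, invoke Neeman for (a.1), identify $\fQ(\Yon(A))$ with the generator $G$ via \autoref{rmk:gc}, and observe (b.1) from the $t$-structure; then apply \autoref{thm:crit1}.

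One small caution on your side remarks for (b.1): the alternatives you offer are not reliable. The generator $G$ is \emph{not} in general compact in $\D(\cG)$ (cf.\ \autoref{ex:noncomp}), and $\min_j k_j$ need not exist when $I$ is infinite, so neither the ``commute with coproducts'' nor the ``uniform shift'' reduction is available in general. Fortunately your primary argument needs neither: since each $k_i<0$, the complex $\coprod_i G[k_i]$ lies in $\D(\cG)^{\ge1}$ while $G\in\D(\cG)^{\le0}$, and the $t$-structure axiom gives the vanishing directly. That is precisely what the paper means by ``there are no negative Hom's between objects of $\cG$''.
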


The geometric implications of this result are then easy to guess:
\begin{itemize}
\smallskip
\item If $X$ is an algebraic stack, the triangulated category $\D(\Qcoh(X))$ has a unique enhancement (see \cite[Corollary 5.4]{CS6});
\smallskip
\item If, in addition, $X$ is quasi-compact and with quasi-finite affine diagonal (this is true, in particular, if $X$ is a quasi-compact and semi-separated scheme), then the full triangulated subcategory $\Dq(X)$ of $\D(\Mod{\ko_X})$ consisting of complexes with quasi-coherent cohomology has a unique enhancement (see, again, \cite[Corollary 5.4]{CS6});
\smallskip
\item If $X$ is scheme and $\alpha$ is an element in the Brauer group $\Br(X)$ of $X$, then the twisted derived category $\D(\Qcoh(X,\alpha))$ has a unique enhancement (see \cite[Corollary 5.7]{CS6}).
\end{itemize}
\smallskip
Indeed, under the above assumptions, $\Qcoh(X)$ and $\Qcoh(X,\alpha)$ are Grothendieck categories (see, for example, \cite[Tag 06WU]{SP}).

The non-expert reader can have a look at \cite{LMB} for the general theory of stacks and at \cite{Cal} for the theory of schemes endowed with a twist from their Brauer groups. Nevertheless, recall that if $X$ is a scheme and 
$\alpha$ is an element in $H^2_{\text {\'et}}(X,\ko_X^*)$ (i.e.\ an
element in the Brauer group $\Br(X)$ of $X$), we may see $\alpha$ as a \v{C}ech
2-cocycle $\{\alpha_{ijk}\in\Gamma(U_i\cap U_j\cap
U_k,\ko^*_X)\}$ with $X=\bigcup_{i\in I} U_i$  an
appropriate  open cover in the \'etale topology. An \emph{$\alpha$-twisted
quasi-coherent sheaf} $E$ consists of pairs $(\{E_i\}_{i\in
I},\{\varphi_{ij}\}_{i,j\in I})$ such that the $E_i$ are
quasi-coherent sheaves on $U_i$ and $\varphi_{ij}:{E}_j|_{U_i\cap
U_j}\rightarrow{E}_i|_{U_i\cap U_j}$ are isomorphisms satisfying
the following conditions:
\begin{itemize}
\item $\varphi_{ii}=\id$;
\item $\varphi_{ji}=\varphi_{ij}^{-1}$;
\item $\varphi_{ij}\comp\varphi_{jk}\comp\varphi_{ki}=\alpha_{ijk}\cdot\id$.
\end{itemize}
The abelian category of such
$\alpha$-twisted quasi-coherent sheaves on $X$ is denoted by $\Qcoh(X,\alpha)$. If the sheaves $E_i$ in the above definition are coherent rather than just quasi-coherent, we get the notion of an $\alpha$-twisted coherent sheaf and the corresponding abelian category $\Coh(X,\alpha)$.

\subsection{Open questions}\label{subsect:openunbounded}

There is another interesting triangulated category which is associated to a Grothendieck category $\cG$. Let $\Inj(\cG)$ be the full subcategory of $\cG$ consisting of injective objects. By \autoref{algchar}, the triangulated category $\Kc(\Inj(\cG))$ is algebraic.

\begin{qn}\label{qn:Kinj}
Does $\Kc(\Inj(\cG))$ have a unique enhancement?
\end{qn}

To motivate our interest in $\Kc(\Inj(\cG))$, observe that such a category played an import role in \cite{BKI}, which deals with modular representations of finite groups. On the other hand, $\Kc(\Inj(\cG))$ has been used to reformulate Grothendieck duality in \cite{NInj}.

\smallskip

It is rather clear that major improvements could come by removing or rather weakening (b.1) in \autoref{thm:crit1}. Indeed, it is very likely that if this can be done, then one could hope to get interesting results about uniqueness of enhancements of some localizing subcategories of categories with unique enhancements. Thus we would get interesting 
answers to \autoref{qn:uniqloc}.

\smallskip

There are other interesting full subcategories of $\D(\Qcoh(X))$, for $X$ an algebraic stack, which are not included in the previous discussion. The first one is $\Db(\Qcoh(X))$, which is clearly not closed under small coproducts. Moreover, by \eqref{eqn:alpcom} and for any sufficiently large regular cardinal one can take $\D(\Qcoh(X)^\alpha)\iso\D(\Qcoh(X))^\alpha$. Recall that $\Qcoh(X)^\alpha$ is an abelian category for this choice of $\alpha$.

Clearly the problem of the uniqueness of the enhancement of these categories cannot be treated using \autoref{thm:crit1}. Thus it makes sense to ask the following.

\begin{qn}\label{qn:dbqcohalpha}
If $X$ is an algebraic stack, do $\Db(\Qcoh(X))$ and $\D(\Qcoh(X))^\alpha$ have unique enhancements, for $\alpha$ a sufficiently large regular cardinal?
\end{qn}

One key property that makes $\Db(\Qcoh(X))$ interesting for our purposes is that there is a natural exact equivalence $\Db(\Qcoh(X))^c\iso\Db(X)$, if $X$ is a noetherian separated scheme (see \cite[Corollary 6.17]{R}). Notice that, in this setting, $\Db(\Qcoh(X))^c$ is the full subcategory of $\Db(\Qcoh(X))$ consisting of those objects $C$ such that $\Db(\Qcoh(X))(C,\farg)$ commutes with small coproducts existing in $\Db(\Qcoh(X))$.

\autoref{qn:Kinj} and \autoref{qn:dbqcohalpha} will be addressed in a separate paper.

\section{Uniqueness of enhancements for compact objects}\label{sect:uniqcatcomp}

Let us now consider the full subcategory of compact objects in the derived category of a Grothendieck category. One of the main aims of this section is to try to clarify why this new situation is a bit more intricate and illustrate some open problems.

\subsection{The statements}\label{subsect:statemcomp}

The analogue of \autoref{thm:crit1} in the context of this section is the following.

\begin{thm}[\cite{LO}, Theorem 2]\label{thm:LOcomp}
Let $\cA$ be a small category which we consider as a dg category concentrated in degree zero and let $\cL$ be a localizing subcategory of $\dgD(\cA)$ such that:
\begin{itemize}
\item[{\rm (a.3)}] $\cL^c=\cL\cap\dgD(\cA)^c$ and $\cL^c$ satisfies {\rm (G1)} in $\cL$;
\item[{\rm (b.3)}] $\dgD(\cA)/\cL(\fQ(\Yon(A)),\fQ(\Yon(A'))[k])=0$, for all $A,A'\in\cA$ and all integers $k<0$.
\end{itemize}
Then $(\dgD(\cA)/\cL)^c$ has a unique enhancement.
\end{thm}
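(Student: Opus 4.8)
The plan is to show that every enhancement of $\cT:=(\dgD(\cA)/\cL)^c$ is isomorphic in $\Hqe$ to one fixed model, following the strategy of Lunts--Orlov and the variant of it used for \autoref{thm:crit1}. By (a.3) the localizing subcategory $\cL$ is compactly generated by $\cL^c=\cL\cap\dgD(\cA)^c$, so \autoref{thm:compquot} (with $\alpha=\aleph_0$) gives that $\dgD(\cA)/\cL$ is compactly generated, that $\fQ$ preserves compactness, and that $\cT$ is the idempotent completion of $\dgD(\cA)^c/\cL^c$. In particular $\fQ(\Yon(\cA))$ is a set of compact objects generating $\dgD(\cA)/\cL$, hence generating $\cT$ as a thick subcategory, and $\cT$ is idempotent complete since $\dgD(\cA)/\cL$ has small coproducts. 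Let $\cL'\mono\hproj{\cA}$ be the full dg subcategory on the objects mapping into $\cL$ under $\dgD(\cA)\iso\Ho(\hproj{\cA})$; by \autoref{pretr} (ii) the dg quotient $\hproj{\cA}/\cL'$ is an enhancement of $\dgD(\cA)/\cL$, and its full dg subcategory $\cat{E}$ on the objects that become compact in $\Ho(\hproj{\cA}/\cL')$ is an enhancement of $\cT$. It will be enough to prove that every enhancement of $\cT$ is isomorphic to $\cat{E}$ in $\Hqe$.

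So fix an arbitrary enhancement $(\cC,\fE)$ of $\cT$. Since $\fQ\comp\Yon$ takes values in compact objects we may set $\fH:=\fE^{-1}\comp\fQ\comp\Yon\colon\cA\to\Ho(\cC)$; let $\cB_0\mono\cC$ be the full dg subcategory on its image, so that $\fH$ factors through $\Ho(\cB_0)$. By (b.3), and because $\fE$ is an equivalence, $\Ho(\cC)(B,B'[j])=0$ for all $B,B'\in\cB_0$ and all $j<0$; hence, exactly as in \autoref{lem:tau}, the dg functor $\tau_{\le0}(\cB_0)\to\Ho(\cB_0)$ is a quasi-equivalence. Precomposing its inverse in $\Hqe$ with $\fH$ and postcomposing with $\tau_{\le0}(\cB_0)\to\cB_0\mono\cC$ yields a quasi-functor $\fH'\colon\cA\to\cC$ whose induced functor, viewed with values in $\Ho(\cC)$, is isomorphic to $\fH$. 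Applying $\Ind$ and passing to homotopy categories I would obtain an exact, coproduct-preserving functor $\fI\colon\dgD(\cA)\to\dgD(\cC)$, induced by the quasi-functor $\Ind(\fH')\colon\hproj{\cA}\to\hproj{\cC}$, such that $\fI\comp\Yon$ is isomorphic to $\fH$ followed by the inclusion $\Ho(\cC)\mono\dgD(\cC)$. As $\Ho(\cC)\iso\cT$ is idempotent complete it coincides with $\dgD(\cC)^c$, so $\fI$ restricts to $\dgD(\cA)^c\to\dgD(\cC)^c$; composing with $\fE$ and this identification produces an exact functor $\dgD(\cA)^c\to\cT$ agreeing with $\fQ|_{\dgD(\cA)^c}$ on the full subcategory $\Yon(\cA)$.

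The technical core, and the step I expect to be the main obstacle, is the rigidity statement at the heart of \cite{LO} (see also \cite[Section 4]{CS6}): if $\cS$ is a set of objects with $\Hom(S,S'[k])=0$ for all $S,S'\in\cS$ and $k<0$, then two exact functors out of the thick subcategory generated by $\cS$ that coincide on $\cS$ are naturally isomorphic; the vanishing for $\cS=\fQ(\Yon(\cA))$ is precisely (b.3), and the truncation above is the mechanism producing the comparison isomorphism. Granting it, the two exact functors $\dgD(\cA)^c\to\cT$ of the previous paragraph are isomorphic, so $\fI$ vanishes on $\cL^c$ (because $\fQ$ does); since $\fI$ preserves coproducts and $\cL$ is compactly generated by $\cL^c$, the functor $\fI$ kills $\cL$ and factors as $\fI=\fI'\comp\fQ$ with $\fI'\colon\dgD(\cA)/\cL\to\dgD(\cC)$ exact and coproduct-preserving, induced by a quasi-functor $\hproj{\cA}/\cL'\to\hproj{\cC}$. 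Applying the rigidity statement once more identifies $\fI'$ on $\cT$ with $\fE^{-1}$ followed by $\Ho(\cC)\mono\dgD(\cC)$, so $\fI'$ is fully faithful on $\fQ(\Yon(\cA))$ in every degree and sends this set to a generating set of compact objects of $\dgD(\cC)$; a routine localizing-subcategory argument then upgrades this to the statement that $\fI'$ is an equivalence, whence the quasi-functor $\hproj{\cA}/\cL'\to\hproj{\cC}$ is a quasi-equivalence. Restricting it to the full dg subcategories of compact objects gives a quasi-equivalence $\cat{E}\to\Pe{\cC}$, and since the Yoneda dg functor identifies $\cC$ with $\Pe{\cC}$ in $\Hqe$ (again because $\Ho(\cC)$ is idempotent complete), we conclude $\cC\iso\cat{E}$ in $\Hqe$. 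As $\cC$ was arbitrary, $\cT$ has a unique enhancement.
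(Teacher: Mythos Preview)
The paper does not give its own proof of this theorem: it is quoted as \cite[Theorem~2]{LO}, and the surrounding text only explains how \autoref{thm:crit2} is deduced from it. The nearest thing to a proof in the paper is the sketch in \autoref{subsect:proofunbounded} for the related \autoref{thm:crit1}, and your argument visibly follows that template, adapted to the compact case. Your use of (a.3) together with \autoref{thm:compquot} (at $\alpha=\aleph_0$) to get compact generation of $\dgD(\cA)/\cL$, preservation of compactness by $\fQ$, and idempotent completeness of $\cT$ is exactly how \cite{LO} sets things up, and the truncation step producing $\fH'\colon\cA\to\cC$ is the standard one.

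The gap is your ``rigidity statement''. As you formulate it---two exact functors out of the thick closure of $\cS$ that coincide on $\cS$ are naturally isomorphic, given the negative-Ext vanishing---this is not a theorem in \cite{LO} or \cite{CS6}, and in that generality it is not true. What \cite{LO} actually does (and what \cite[Lemma~4.4, Proposition~4.5]{CS6} adapt in the well-generated case) is not a comparison of two abstract exact functors, but a direct analysis of the \emph{specific} functor $\fI=\Ho(\Ind(\fH'))$: using (b.3) and a filtration argument on objects of $\dgD(\cA)^c$, one shows by hand that $\fI$ annihilates $\cL^c$ and that the induced functor on the quotient is fully faithful on the image of $\fQ\comp\Yon$. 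The ``isomorphism of functors'' you invoke twice is never established as a standalone statement; rather, the required vanishing and full faithfulness are extracted for this particular $\fI$ from its construction. Once you replace your two appeals to rigidity by those concrete arguments, the rest of your outline---factoring through the quotient because $\cL$ is compactly generated by $\cL^c$, upgrading full faithfulness on a set of compact generators to an equivalence, and then restricting to compact objects to compare $\cC$ with the fixed enhancement---is correct and is the path taken in \cite{LO}.
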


\begin{remark}
Notice that (b.3) is exactly the same as (b.2) in \autoref{thm:LO1}. Moreover, (a.3) could be replaced by the following weaker assumption: $\cL\cap\dgD(\cA)^c$ satisfies {\rm (G1)} in $\cL$. But the argument in the proof of \cite[Corollary 6.7]{CS6} shows that, if this is true, then automatically $\cL^c=\cL\cap\dgD(\cA)^c$.
\end{remark}

Suppose now that we want to deal with the enhancements of $\D(\cG)^c$, where $\cG$ is a Grothendieck category. We already know that we can pick a small set of generators $\cA$ of $\cG$ such that one gets an exact equivalence $\D(\cG)^c\iso(\dgD(\cA)/\cL)^c$, for some localizing subcategory $\cL$ of $\dgD(\cA)$.

The discussion at the end of \autoref{subsect:proofunbounded} should have clarified that (b.3) is automatically satisfied in this situation. Unfortunately, (a.3) is not easily verified and this is the reason why $\cA$ has to be chosen carefully. In conclusion, the following is the result that can be deduced from \autoref{thm:LOcomp}.

\begin{thm}[\cite{CS6}, Theorem B]\label{thm:crit2}
Let $\cG$ be a Grothendieck category with a small set $\cA$ of generators such that
\begin{enumerate}
\item $\cA$ is closed under finite coproducts;
\item Every object of $\cA$ is a noetherian object in $\cG$;
\item If $f\colon A'\epi A$ is an epimorphism of $\cG$ with
$A,A'\in\cA$, then $\ker f\in\cA$;
\item For every $A\in\cA$ there exists $N(A)>0$ such that
$\D(\cG)\left(A,\sh[N(A)]{A'}\right)=0$ for every $A'\in\cA$.
\end{enumerate}
Then $\D(\cG)^c$ has a unique enhancement.
\end{thm}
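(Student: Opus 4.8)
The plan is to realise $\D(\cG)^c$ in the form $(\dgD(\cA)/\cL)^c$ and then to invoke \autoref{thm:LOcomp}, so that the whole problem reduces to verifying its hypotheses {\rm (a.3)} and {\rm (b.3)}. First I would set up the realization functor: regarding $\cA$ as a $\K$-linear category concentrated in degree zero, the inclusion $\cA\mono\cG$ extends along $\Yon$ to a functor $\fL\colon\Mod{\cA}\to\cG$ which --- using hypotheses (1) and (3) --- is exact and admits a fully faithful right adjoint, so that $\cG$ is the Gabriel quotient of $\Mod{\cA}$ by the localizing Serre subcategory $\cN:=\ker\fL$. As recalled just before the statement, and using the identification $\dgD(\cA)\iso\D(\Mod{\cA})$, one obtains an exact equivalence $\dgD(\cA)/\cL\iso\D(\cG)$ with $\cL=\{C\st H^n(C)\in\cN\ \all n\}$, carrying $\fQ(\Yon(A))$ to $A\in\cA\subset\cG\subset\D(\cG)$; taking compact objects gives $(\dgD(\cA)/\cL)^c\iso\D(\cG)^c$. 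For this $\cL$, condition (b.3) is immediate --- it reads $\Ext^k_\cG(A,A')=0$ for $A,A'\in\cA$ and $k<0$, which holds since $\cA$ lies in the heart of $\D(\cG)$ --- so all the content is in (a.3).

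For (a.3) I would invoke the remark following \autoref{thm:LOcomp}, by which it suffices to prove that $\cL\cap\dgD(\cA)^c$ satisfies (G1) in $\cL$; in fact I would establish the stronger statement that $\cL$ is generated, as a localizing subcategory of $\D(\Mod{\cA})$, by a \emph{small set of compact} (i.e.\ perfect) objects. Granting this, Neeman's \autoref{thm:compquot} applied with $\cT=\D(\Mod{\cA})$ and $\alpha=\aleph_0$ gives $\cL^c=\cL\cap\dgD(\cA)^c$ by part (ii), while compact generation of $\cL$ together with \autoref{rmk:gen} shows that $\cL^c$ satisfies (G1) in $\cL$. The crucial input is the following, which is where hypotheses (1) and (3) are used: \emph{a coherent $\cA$-module $M$ with $\fL(M)=0$ is, viewed in $\D(\Mod{\cA})$, a perfect complex lying in $\cL$}. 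Indeed, by (1) one may write $M=\cok(\Yon(g))$ with $g\colon A_1\to A_0$ in $\cA$; then $\cok(g)=\fL(M)=0$, so $g$ is an epimorphism of $\cG$, and by (3) its kernel $A_2\mono A_1$ lies in $\cA$. The perfect complex $C=(\Yon(A_2)\to\Yon(A_1)\to\Yon(A_0))$, concentrated in degrees $-2,-1,0$, satisfies $H^0(C)=M$, $H^{-2}(C)=0$ (a monomorphism of $\cG$ between objects of the full subcategory $\cA$ remains a monomorphism after applying $\Yon$) and $H^{-1}(C)=0$ (a morphism in $\cA$ to $A_1$ killed by $g$ factors through $A_2=\ker(g)$ already in $\cA$, by fullness); hence $C\iso M$ in $\D(\Mod{\cA})$, and applying $\fL$ shows $\fL(C)$ is exact, so $M\in\cL$.

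It then remains to pass from coherent modules to all of $\cL$: since $\cL$ is generated as a localizing subcategory by $\cN$ (via canonical filtrations), it is enough that $\cN$ itself be generated inside $\Mod{\cA}$ by the coherent $\cA$-modules it contains, and the previous paragraph then exhibits the required small set of perfect generators of $\cL$. I expect \textbf{this last reduction to be the main obstacle}, and it is exactly where hypotheses (2) and (4) enter: (2) (noetherianity) makes the relevant subobjects coherent and hence available, and (4) (the uniform vanishing $\D(\cG)(A,A'[N(A)])=0$) supplies the cohomological amplitude bound that forces $\cN$ to be of finite type and, correspondingly, $\cL$ to be \emph{compactly}, not merely well, generated; without some such finiteness input one cannot expect this, since the obvious candidates built from representables do not lie in $\cL$. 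Once (a.3) and (b.3) are in place, \autoref{thm:LOcomp} applies and $(\dgD(\cA)/\cL)^c\iso\D(\cG)^c$ has a unique enhancement.
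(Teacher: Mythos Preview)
Your approach matches the paper's exactly: realize $\D(\cG)^c$ as $(\dgD(\cA)/\cL)^c$ and apply \autoref{thm:LOcomp}, with (b.3) immediate and all the content in (a.3). The paper itself does not verify (a.3), stating only that this ``requires a big amount of quite technical work that cannot be summarized here'' and is carried out in \cite{CS6}; your sketch in fact goes further than the paper's own account, correctly isolating the key lemma (finitely presented objects of $\cN$ are perfect via a three-term resolution of representables, using (1) and (3)) and the genuine remaining obstacle (that these compactly generate $\cL$, which is where (2) and (4) enter).
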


Showing that $\cL$ satisfies (a.3), under the assumptions (1)--(4), requires a big amount of quite technical work that cannot be summarized here. Nonetheless, in \autoref{subsect:conjcomp} we will explain why some assumptions on $\cA$ are very plausible.

\begin{remark}\label{rmk:semis}
We observed in \cite[Remark 6.8]{CS6} that \autoref{thm:crit2} can be refined a bit and conclude that, under the same assumptions, $\D(\cG)^c$ has as a semi-strongly unique enhancement. For this, one use \cite[Theorem 6.4]{LO} instead of \autoref{thm:LOcomp}.
\end{remark}

\begin{remark}\label{rmk:cpt}
It is easy to deduce directly from \autoref{thm:crit2} that $\D(\cG)$ has a unique enhancement, if $\cG$ is a Grothendieck category satisfying the assumptions (1)--(4). Indeed, assume that $(\cC_1,\fE_1)$ and $(\cC_2,\fE_2)$ are enhancements of $\D(\cG)$ and define $\cD_i$ to be the full dg subcategory of $\cC_i$ consisting of the objects in $\Ho(\cC_i)^c$. Set $\fF_i:=\fE_i\rest{\cD_i}$. It is clear that $(\cD_1,\fF_1)$ and $(\cD_2,\fF_2)$ are enhancements of $\D(\cG)^c$. Since, by \autoref{thm:crit2}, the latter category has a unique enhancement, there exists an isomorphism $\fF\in\Hqe(\cD_1,\cD_2)$. By the discussion in \autoref{sect:dgenhancements}, we get an isomorphism $\fF':=\Ind(\fF)\in\Hqe(\hproj{\cD_1},\hproj{\cD_2})$. 

Finally, we observed above that (1)--(4) in \autoref{thm:crit2} imply (a.3) in \autoref{thm:LOcomp}. By \autoref{thm:compquot}, the triangulated category $\D(\cG)$ is generated by the objects in $\D(\cG)^c$. Hence, by \cite[Proposition 1.17]{LO}, we get an isomorphism $\fG_i\in\Hqe(\hproj{\cD_i},\cC_i)$, for $i=1,2$. Thus $\fG_2\comp\fF'\comp\fG_1^{-1}$ is the isomorphism in $\Hqe(\cC_1,\cC_2)$ we were looking for.

Unfortunately we do not know any strategy to deduce from \autoref{thm:main1} the uniqueness of the enhancement of $\D(\cG)^c$.
\end{remark}

\smallskip

Back to the geometric situation, recall that, given a commutative ring $R$, a complex $P\in\D(\Mod{R})$ is \emph{perfect} if it is quasi-isomorphic to a bounded complex of finitely generated projective $R$-modules. According to the terminology in \cite{HR}, a complex $P\in\Dq(X)$, where $X$ is an algebraic stack, is \emph{perfect} if for any smooth morphism $\spec(R)\to X$, where $R$ is a commutative ring, the complex of $R$-modules $\rd\Gamma(\spec(R),P\rest{\spec(R)})$ is perfect. We set $\Dp(X)$ to be the full subcategory of $\Dq(X)$ consisting of perfect complexes.

Now, a quasi-compact and quasi-separated algebraic stack $X$ is \emph{concentrated} if $\Dp(X)\subseteq\Dq(X)^c$ (notice that this is always the case if $X$ is a scheme). On the other hand, by \cite[Lemma 4.4]{HR}, the other inclusion $\Dq(X)^c\subseteq\Dp(X)$ holds as well. Moreover, if $X$ has also quasi-finite affine diagonal, then the natural functor $\D(\Qcoh(X))\to\Dq(X)$ is an exact equivalence (this follows from \cite[Theorem A]{HR} and \cite[Theorem 1.2]{HNR}). Therefore, if $X$ is a concentrated algebraic stack with quasi-finite affine diagonal, then there is a natural exact equivalence
\begin{equation*}\label{perfcpt}
\Dp(X)\iso\D(\Qcoh(X))^c.
\end{equation*}
Finally, if an algebraic stack $X$ has the property that $\Qcoh(X)$ is generated, as a Grothendieck category, by a small set of objects contained in $\Coh(X)\cap\Dp(X)$, we say that \emph{$X$ has enough perfect coherent sheaves}.

We observed in \autoref{subsect:applicunbounded} that $\Qcoh(X)$ is a Grothendieck category.
Hence, it is easy to guess that the main geometric application of the general results above is:

\begin{cor}[\cite{CS6}, Proposition 6.10]\label{cor:geocomp}
If $X$ is a noetherian concentrated algebraic stack with quasi-finite affine diagonal and with enough perfect coherent sheaves, then $\Dp(X)$ has a semi-strongly unique enhancement.
\end{cor}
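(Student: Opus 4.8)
The plan is to deduce the statement from \autoref{thm:crit2}, applied to the Grothendieck category $\cG:=\Qcoh(X)$. As recalled just above the statement, the hypotheses that $X$ is concentrated and has quasi-finite affine diagonal yield a natural exact equivalence $\Dp(X)\iso\D(\Qcoh(X))^c$, and $\Qcoh(X)$ is a Grothendieck category (see \autoref{subsect:applicunbounded}); moreover, having a semi-strongly unique enhancement is a property invariant under exact equivalences. Hence it is enough to exhibit a small set of generators $\cA$ of $\cG$ satisfying conditions (1)--(4) of \autoref{thm:crit2} and then to invoke that theorem in the refined form of \autoref{rmk:semis}, which gives that $\D(\cG)^c$ --- and therefore $\Dp(X)$ --- has a semi-strongly unique enhancement.

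First I would fix the generators. By hypothesis $X$ has enough perfect coherent sheaves, so there is a small set $\cA_0\subseteq\Coh(X)\cap\Dp(X)$ generating $\cG$. I would then let $\cA$ be (a small skeleton of) the smallest full subcategory of $\Coh(X)\cap\Dp(X)$ containing $\cA_0$ and closed, inside $\cG$, under finite coproducts and under kernels of epimorphisms between its objects. Two elementary facts make this legitimate: finite coproducts and kernels of epimorphisms of coherent sheaves on the noetherian stack $X$ are again coherent; and if $0\to K\to A'\to A\to0$ is exact in $\cG$ with $A$ and $A'$ of finite Tor-dimension, then so is $K$, by the long exact sequence of $\mathrm{Tor}$ --- so $K$ is again a perfect coherent sheaf. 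Since $\Coh(X)$ is essentially small ($X$ being noetherian), the closure just described is essentially small, so $\cA$ can be taken small; and $\cA\supseteq\cA_0$ still generates $\cG$.

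Next I would verify (1)--(4) for this $\cA$. Conditions (1) and (3) hold by construction, the latter using precisely the Tor-dimension computation above to see that the kernel of any epimorphism in $\cG$ between objects of $\cA$ lands back in $\Coh(X)\cap\Dp(X)$, hence in $\cA$. Condition (2) holds because a coherent sheaf on a noetherian algebraic stack is a noetherian object of $\Qcoh(X)$. For (4), I would use that a noetherian concentrated stack has finite cohomological dimension, say $d$ (this is essentially the content of concentratedness, since $\so_X$ is then a compact object of $\Dq(X)$): given $A\in\cA$, a finite locally free resolution exhibits $A$ with Tor-amplitude contained in some $[-b,0]$, so $\RHom(A,A')$ has cohomology in degrees $[0,b]$ for every sheaf $A'$, and applying $\rd\Gamma(X,\farg)$ gives $\D(\cG)(A,A'[n])\iso\Ext^n(A,A')=0$ for all $n>b+d$ and all $A'\in\cA$; thus $N(A):=b+d+1$ works. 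Plugging $\cA$ into \autoref{thm:crit2} (in the form of \autoref{rmk:semis}) and transporting the conclusion along $\Dp(X)\iso\D(\cG)^c$ completes the argument.

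The hard part is not localized in any single step: all the substantial work is already contained in \autoref{thm:crit2} and \autoref{rmk:semis}, and what remains here is the --- routine but slightly delicate --- bookkeeping that turns the bare generation statement ``$X$ has enough perfect coherent sheaves'' into a generating set that is simultaneously small, contained in $\Coh(X)\cap\Dp(X)$, closed under finite coproducts and kernels of epimorphisms, and subject to the uniform $\Ext$-vanishing required by (4). Within that, the two points where the geometry genuinely enters are the stability of perfectness (finite Tor-dimension) under kernels of epimorphisms, needed for (3), and the finiteness of the cohomological dimension of a noetherian concentrated stack, needed for (4).
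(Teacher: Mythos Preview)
Your proposal is correct and follows essentially the same route as the paper: apply \autoref{thm:crit2} (strengthened via \autoref{rmk:semis}) to $\cG=\Qcoh(X)$ with generators taken from $\Coh(X)\cap\Dp(X)$, and then transport the conclusion along the equivalence $\Dp(X)\iso\D(\Qcoh(X))^c$. The only difference is cosmetic: rather than building $\cA$ as the closure of an initial generating set under finite coproducts and kernels of epimorphisms, the paper simply takes $\cA$ to be a skeleton of all of $\Coh(X)\cap\Dp(X)$, which is already closed under those operations by precisely the arguments you give for (1) and (3); the paper then defers the verification of (1)--(4) to \cite{CS6} without spelling out (2) and (4) as you do.
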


Here we are using \autoref{rmk:semis} to strengthen the result \cite[Proposition 6.10]{CS6} and prove semi-strongly uniqueness rather than uniqueness. To give an idea of how the proof goes, we just mention that we can take $\cA$ to be the full subcategory of $\Qcoh(X)$ whose set of objects is obtained by taking a representative in each isomorphism class of objects in $\Coh(X)\cap\Dp(X)$. Since $X$ has enough perfect coherent sheaves, $\cA$ is a set of generators of $\Qcoh(X)$, as a Grothendieck category.

The reader which feels a bit uneasy with the language of stacks can be reassured by the simpler case where $X$ is a noetherian scheme with enough locally free sheaves. In this case, $\Dp(X)$ has a unique enhancement (see \cite[Corollary 6.11]{CS6}). A scheme \emph{has enough locally free sheaves} if, for any finitely presented sheaf $F$, there is an epimorphism $E\epi F$ in $\Qcoh(X)$, where $E$ is locally free of finite type.

\begin{remark}\label{rmk:Tot}
It should be clarified that no example of a quasi-compact and semi-separated scheme without enough locally free sheaves is known. Actually, in \cite{Tot} the existence of enough locally free sheaves under these assumptions is left as an open question.
\end{remark}

\begin{remark}\label{rmk:coh}
The same circle of ideas applies to $\Db(X)$, the bounded derived category of coherent sheaves on a scheme $X$. Indeed, if $X$ is a noetherian scheme with enough locally free sheaves, then $\Db(X)$ has a unique enhancement (see \cite[Corollary 7.2]{CS6}).

For this, we use the notion of \emph{compactly approximated} object in $\D(\Qcoh(X))$ rather than the one of compact object. We do  not define it here.
\end{remark}

\subsection{The importance of being compact}\label{subsect:conjcomp}

Let us go back to the general setting where $\cT$ is a compactly generated triangulated category with small coproducts.

\begin{definition}\label{def:smashing}
A localizing subcategory $\cL$ of $\cT$ is a \emph{smashing subcategory} if the inclusion functor $\iota\colon\cL\mono\cT$ has a right adjoint $\iota^R$ which commutes with small coproducts.
\end{definition}

\begin{lem}\label{lem:smash}
Let $\cT$ be a compactly generated triangulated category with small coproducts and let $\cL$ be a localizing subcategory of $\cT$. If $\cL^c=\cL\cap\cT^c$ and $\cL^c$ satisfies {\rm (G1)} in $\cL$, then $\cL$ is a smashing subcategory of $\cT$.
\end{lem}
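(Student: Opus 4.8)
The plan is to show that $\cL$ is compactly generated and that the inclusion preserves compactness, and then to deduce that $\cL$ is smashing by the usual argument. By hypothesis $\cL^c$ satisfies (G1) in $\cL$, so there is a small set $\cS$ of objects of $\cL^c$ such that an object $X\in\cL$ vanishes if and only if $\cL(S,X[j])=0$ for all $S\in\cS$ and all $j\in\ZZ$. Since moreover $\cL^c=\cL\cap\cT^c$, every $S\in\cS$ is a compact object of $\cT$; and because $\cL$ is closed under small coproducts in $\cT$ (so that coproducts formed in $\cL$ agree with those formed in $\cT$), every $S\in\cS$ is compact in $\cL$ as well. As recalled in \autoref{subsect:introwell}, a small set of compact objects satisfying (G1) automatically $\aleph_0$-compactly generates; hence $\cS$ compactly generates $\cL$, and in particular $\cL$ satisfies Brown representability by \cite{N2}.

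Next I would produce the right adjoint of the inclusion $\iota\colon\cL\mono\cT$. This functor is exact and, $\cL$ being localizing, it preserves small coproducts. For every $X\in\cT$ the functor $\cL\opp\to\Mod{\K}$ sending $S$ to $\cT(\iota(S),X)$ is cohomological and turns small coproducts into products, hence is representable by Brown representability; this produces a right adjoint $\iota^R\colon\cT\to\cL$ of $\iota$, which is then automatically exact. Thus it only remains to show that $\iota^R$ commutes with small coproducts, which is exactly the assertion that $\cL$ is smashing.

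So fix a small family $\{X_i\}_{i\in I}$ in $\cT$. Using the adjunction $\iota\dashv\iota^R$ and the fact that $\iota$ preserves coproducts, one gets a canonical morphism $\theta\colon\coprod_{i\in I}\iota^R(X_i)\to\iota^R(\coprod_{i\in I}X_i)$ in $\cL$, and I would complete it to a distinguished triangle with cone $C$. To see $C\iso 0$ it suffices, by (G1), to check $\cL(S,C[j])=0$ for all $S\in\cS$ and all $j\in\ZZ$. For such $S$ and $j$, compactness of $S$ in $\cL$ identifies $\cL(S,\farg)$ applied to the source of $\theta[j]$ with $\coprod_i\cL(S,\iota^R(X_i)[j])$; compactness of $S$ in $\cT$ together with the adjunction isomorphism $\cL(S,\iota^R(\farg))\iso\cT(\iota(S),\farg)$ identifies $\cL(S,\farg)$ applied to the target of $\theta[j]$ with $\coprod_i\cT(\iota(S),X_i[j])\iso\coprod_i\cL(S,\iota^R(X_i)[j])$; and under these identifications $\cL(S,\theta[j])$ becomes the identity. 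Hence $\cL(S,\theta[j])$ is an isomorphism for all $S\in\cS$ and all $j$, so the long exact sequence of the triangle gives $\cL(S,C[j])=0$ and therefore $C\iso 0$. Thus $\theta$ is an isomorphism, $\iota^R$ commutes with small coproducts, and $\cL$ is a smashing subcategory of $\cT$.

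The one point that requires a little care is the identification of $\cL(S,\theta[j])$ with the identity map under the isomorphisms above; this amounts to a formal diagram chase through the unit and counit of $\iota\dashv\iota^R$ and the compatibility of $\iota$ with coproducts, and is the only genuinely fiddly step. Everything else is routine.
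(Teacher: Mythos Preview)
Your proof is correct and follows essentially the same approach as the paper: both first observe that $\cL$ is compactly generated so that $\iota^R$ exists by Brown representability, and then use the adjunction together with compactness of the generators in both $\cL$ and $\cT$ to check that the canonical comparison map $\coprod_i\iota^R(X_i)\to\iota^R\bigl(\coprod_iX_i\bigr)$ is an isomorphism. Your cone-and-(G1) argument is simply a more explicit unpacking of the paper's appeal to the naturality of the chain of isomorphisms followed by Yoneda's lemma.
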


\begin{proof}
First notice that, $\cL$ being compactly generated, $\iota^R$ exists by Brown representability (see, for example, \cite[Section 8.2]{N2}).
  
Given $\{X_i:i\in I\}$ a small set of objects in $\cT$ and $L\in\cL^c$, there is a sequence of natural isomorphisms
\[
\cL\biggl(L,\iota^R\Bigl(\Plus_{i\in I}X_i\Bigr)\biggr)\iso\cT\Bigl(\iota(L),\Plus_{i\in I}X_i\Bigr)\iso\Plus_{i\in I}\cT\bigl(\iota(L),X_i\bigr)\iso\Plus_{i\in I}\cL\bigl(L,\iota^R(X_i)\bigr)\iso\cL\Bigl(L,\Plus_{i\in I}\iota^R(X_i)\Bigr).
\]
Here the second isomorphism uses that $\cL^c=\cL\cap\cT^c$.

Since $\cL^c$ satisfies (G1) in $\cL$, we deduce from \autoref{rmk:gen} that
\[
\cL\left( L,\iota^R\left(\Plus_{i\in I}X_i\right)\right)\iso\cL\left( L,\Plus_{i\in I}\iota^R\left(X_i\right)\right), 
\]
for all $L\in\cL$. Hence $\iota^R\left(\Plus_{i\in I}X_i\right)\iso\Plus_{i\in I}\iota^R\left(X_i\right)$ by Yoneda's lemma.
\end{proof}

It follows that $\cL$ in \autoref{thm:LOcomp} is a smashing subcategory. The converse of \autoref{lem:smash} is the content of the following.

\begin{conj}[Telescope Conjecture]\label{conj:telescope}
If $\cT$ is a compactly generated triangulated category with small coproducts and $\cL$ is a smashing subcategory of $\cT$, then $\cL^c=\cL\cap\cT^c$ and $\cL^c$ satisfies {\rm (G1)} in $\cL$.
\end{conj}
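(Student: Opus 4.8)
This is a version of the \emph{Telescope Conjecture}; it is a theorem of Neeman \cite{N1} when $\cT=\D(\Mod{R})$ for $R$ a noetherian commutative ring, but it is a delicate and much-studied problem in general, so what follows is only the natural line of attack together with the point at which it stalls.

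The plan is to reduce the whole statement to the single assertion that $\cL$ coincides with the smallest localizing subcategory $\cL_0$ of $\cT$ containing $\cL\cap\cT^c$. Granting $\cL=\cL_0$, condition (a.3) follows at once: the objects of $\cL\cap\cT^c$ are compact in $\cT$, hence compact in $\cL$ (coproducts in $\cL$ agree with those in $\cT$), so $\cL=\cL_0$ is compactly generated; by Neeman's description of the compact objects of a compactly generated category \cite{N2}, $\cL^c$ is then the thick closure of $\cL\cap\cT^c$ inside $\cL$, and since $\cT^c$ is thick in $\cT$ this closure lies in $\cT^c$, forcing $\cL^c=\cL\cap\cT^c$. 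Moreover $\cL=\cL_0$ says exactly that $\cL\cap\cT^c$ generates $\cL$, which by \autoref{rmk:gen} is precisely (G1) for $\cL^c$ in $\cL$. Since $\cL_0\subseteq\cL$ holds trivially, everything rests on the inclusion $\cL\subseteq\cL_0$.

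To attack $\cL\subseteq\cL_0$ I would use the smashing hypothesis. First, $\cL_0$ is itself smashing: being compactly generated, its inclusion has a right adjoint $\iota_0^R$ (Brown representability, as in \cite[Section 8.2]{N2}), and $\iota_0^R$ commutes with small coproducts because $\rort{(\cL\cap\cT^c)}$ is closed under coproducts (the functors $\cT(C,\farg)$ commuting with coproducts for $C$ compact). Both $\cL$ and $\cL_0$ thus yield Bousfield localization triangles
\[
\iota_0\iota_0^R X\to X\to X_{\cL_0}\to\iota_0\iota_0^R X[1],\qquad \iota\iota^R X\to X\to X_{\cL}\to\iota\iota^R X[1],
\]
and $\cL_0\subseteq\cL$ induces a morphism of the former into the latter. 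Now $\cL=\cL_0$ is equivalent to $\rort{\cL}=\rort{(\cL\cap\cT^c)}$ (right orthogonals in $\cT$, with all shifts), that is, to: every $Y\in\cT$ with $\cT(L,Y[j])=0$ for all $L\in\cL\cap\cT^c$ and all $j\in\ZZ$ also satisfies $\cT(L',Y)=0$ for all $L'\in\cL$. One would try to prove this by presenting an arbitrary $L'\in\cL$, up to coproducts and triangles, in terms of the compact objects of $\cT$ lying in $\cL$, the commutation of $\iota^R$ with small coproducts being the one structural input that the smashing assumption supplies.

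The main obstacle is exactly this last step, which is nothing but the content of the conjecture: for a general compactly generated $\cT$ there is no known way to produce, inside an arbitrary smashing subcategory $\cL$, enough compact objects of $\cT$ to generate $\cL$. In the one case that is fully understood --- $R$ noetherian commutative --- Neeman's proof runs through the classification of the smashing subcategories of $\D(\Mod{R})$ by specialization-closed subsets of $\spec R$, together with Koszul and local-cohomology computations, and none of this transfers. So the strategy above breaks down precisely at ``$\cL$ is generated by $\cL\cap\cT^c$'', and a proof at the stated level of generality would require a genuinely new idea. (Were such an argument available, it would permit deleting hypothesis (a.3) from \autoref{thm:LOcomp}, so that $(\dgD(\cA)/\cL)^c$ would have a unique enhancement for every smashing subcategory $\cL$ satisfying (b.3).)
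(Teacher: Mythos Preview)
The statement is labelled a \emph{conjecture} in the paper, and the paper offers no proof of it. More than that: immediately after stating it (and after \autoref{rmk:telescope}, which reformulates it exactly along the lines of your reduction to $\cL=\cL_0$), the paper records that Keller produced a counterexample \cite{KTel}. So the assertion is not merely hard --- it is \emph{false} in the stated generality.

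Your reduction is correct and matches the paper's \autoref{rmk:telescope}: the two-part formulation $\cL^c=\cL\cap\cT^c$ together with (G1) is equivalent to the classical version ``$\cL$ is generated by $\cL\cap\cT^c$'', and you have traced the implications cleanly. Where your write-up goes wrong is in the framing. You describe the obstruction as a place where ``a genuinely new idea'' is needed; in fact no idea will close the gap, because there exist compactly generated $\cT$ and smashing $\cL\subseteq\cT$ for which $\cL\cap\cT^c$ does \emph{not} generate $\cL$. Your parenthetical at the end --- that a proof would let one drop (a.3) from \autoref{thm:LOcomp} --- is therefore also misplaced: the paper's \autoref{qn:smash} asks whether the \emph{conclusion} of \autoref{thm:LOcomp} survives under the weaker smashing hypothesis, precisely because one cannot hope to recover (a.3) itself.

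In short: your structural analysis is sound and agrees with the paper's own discussion, but you should replace ``open and difficult'' with ``known to fail (Keller)'' and adjust the surrounding commentary accordingly.
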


\begin{remark}\label{rmk:telescope}
It should be noted that the original formulation of the above conjecture would be: If $\cT$ is a compactly generated triangulated category with small coproducts and $\cL$ is a smashing subcategory of $\cT$, then $\cL$ is generated by a small set of objects in $\cL\cap\cT^c$. The conjecture is presented in this form in \cite{Ksm2} (and it is based on conjectures by Ravenel \cite[1.33]{Ra}).

Now it is clear that it is equivalent to take the whole $\cL\cap\cT^c$ as a set of generators. On the other hand, if the objects in $\cL\cap\cT^c$ generate $\cL$, then they satisfy (G1) in $\cL$ as well. We want to prove that if $\cL\cap\cT^c$ satisfies (G1) in $\cL$, then it generates $\cL$ as well. Indeed, if $\cN$ is the smallest localizing subcategory of $\cT$ containing $\cL\cap\cT^c$, then, by \autoref{thm:compquot}, it is well generated. Hence Brown representability holds for $\cN$. At this point, one copies verbatim the same argument as in the proof of \cite[Proposition 5.1]{P}.

In conclusion, if $\cL\cap\cT^c$ satisfies (G1) in $\cL$, then it is easy to show that $\cL^c=\cL\cap\cT^c$ (see the proof of \cite[Corollary 6.7]{CS6}). In other words, the classical version of the Telescope Conjecture is equivalent to \autoref{conj:telescope}.
\end{remark}

Long ago, Keller produced a simple counterexample to this conjecture \cite{KTel}. Hence we cannot expect that verifying (a.3) can be an easy task, in general. In a sense, this motivates the fact that the presence of the assumptions (1)--(4) in \autoref{thm:crit2} should not be surprising.

Anyway, we do not feel that (1)--(4) are the sharpest assumptions to make. In particular, we believe that the following is a rather natural problem to consider.

\begin{qn}\label{qn:smash}
Is a result like \autoref{thm:LOcomp} true if we change (a.3) to the assumption that $\cL$ is a smashing subcategory?
\end{qn}

If this were true, then in \autoref{cor:geocomp} one could replace `noetherian' with `quasi-compact, semi-separated'. Indeed, with the above choice of $\cA$, the quotient functor $\fQ\colon\dgD(\cA)\to\dgD(\cA)/\cL$ sends $\dgD(\cA)^c$ to $(\dgD(\cA)/\cL)^c$ (this follows quite easily from \cite[Corollary 5.3]{CS6}). It is an exercise with the definition of compact objects to prove that, if this is true, then $\cL$ is smashing.

An approach to \autoref{qn:smash} could be via \cite{Ksm1} and \cite{Ksm2}.

\subsection{Strong uniqueness}\label{subsect:strongly}

We finish this section recalling that a result similar to \autoref{cor:geocomp} can be proven for the strong uniqueness of enhancements.

\begin{thm}[\cite{LO}, Theorem 2.14]\label{thm:LOstrong}
Let $X$ be a projective scheme over a field $\K$ such that the maximal $0$-dimensional torsion subsheaf $T_0(\ko_X)$ of $\ko_X$ is trivial. Then $\Dp(X)$ and $\Db(X)$ have strongly unique enhancements.
\end{thm}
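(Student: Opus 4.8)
The plan is to deduce strong uniqueness from the uniqueness statements already established, the extra work being concentrated in one rigidity step. Since $X$ is projective over $\K$, it is noetherian; being a scheme it is concentrated, being separated it has affine and in particular quasi-finite diagonal, and the locally free sheaves $\ko_X(-n)$ with $n\gg0$ generate $\Qcoh(X)$, so that $X$ has enough locally free sheaves and enough perfect coherent sheaves. Hence \autoref{cor:geocomp} and \autoref{rmk:coh} already give that $\Dp(X)$ and $\Db(X)$ have a unique enhancement, and by the same circle of ideas (cf.\ \autoref{rmk:semis}) a semi-strongly unique one. Writing $\cT$ for either of these categories, it remains to promote semi-strong uniqueness to strong uniqueness: given enhancements $(\cC,\fE)$ and $(\cC',\fE')$ of $\cT$ and a quasi-functor $\fF$ with $\Ho(\fF)$ an equivalence and isomorphisms $\fE(C)\iso\fE'(\Ho(\fF)(C))$ for all $C\in\cC$, we must produce an isomorphism of functors $\fE\iso\fE'\comp\Ho(\fF)$, possibly after replacing $\fF$. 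Transporting everything along $\fE$ and $\fE'$, this is equivalent to the following: an exact autoequivalence $\Theta$ of $\cT$ which is induced by a quasi-functor between enhancements and which fixes every object up to isomorphism is isomorphic to $\id_\cT$.

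The input that makes this possible is the ample sequence $\{\ko_X(i)\}_{i\in\ZZ}$ used in the proofs of \autoref{cor:geocomp} and \autoref{rmk:coh}, which is an ample sequence in the sense of \cite{LO}. The hypothesis on $X$ enters precisely here: the vanishing of the maximal $0$-dimensional torsion subsheaf $T_0(\ko_X)$ forces $H^0(X,\ko_X(m))=0$ for all $m<0$; equivalently,
\[
\Hom_\cT(\ko_X(i),\ko_X(j))=0\qquad\text{for }i>j,
\]
so that the graded Yoneda algebra $A:=\bigoplus_{i\le j}\Hom_\cT(\ko_X(i),\ko_X(j))$ is \emph{directed} --- a rigidity which is false for a general projective scheme having embedded or isolated points. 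One now runs the construction behind \autoref{thm:crit1} and \autoref{thm:crit2}, but keeping track of compatibilities. On the full dg subcategory of $\cC$ spanned by lifts of the ample sequence, the quasi-functor underlying $\Theta$, together with a choice of isomorphisms $\Theta(\ko_X(i))\iso\ko_X(i)$, induces a grading-preserving automorphism of $A$; the directedness of $A$, analysed as in \cite{LO} at the level of the (a priori non-formal) directed dg algebra enhancing $A$, allows one to rescale the chosen isomorphisms so that this automorphism becomes the identity, i.e.\ to produce a natural isomorphism $\Theta\iso\id$ on the dg subcategory generated by the ample sequence. Finally one propagates this to all of $\cT$: every object is built from the $\ko_X(i)$ by finitely many cones, shifts and direct summands (using compact approximation in the case of $\Db(X)$, as in \autoref{rmk:coh}), and the extension is carried out at the level of dg modules exactly as the quasi-functor in \autoref{thm:crit2} is extended from its generators. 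Undoing the transport gives the desired isomorphism $\fE\iso\fE'\comp\Ho(\fF)$.

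The genuine obstacle is this last rigidity step: passing from the triangulated statement --- that the directed algebra $A$ admits no exotic automorphism fixing its objects --- to the dg statement that an automorphism of the directed dg enhancement of $A$ which is trivial on cohomology and fixes the objects can be conjugated to the identity, compatibly with the equivalence $\Ho(\fF)$. This is exactly where the detailed computations of \cite{LO} are needed, and where the hypothesis $T_0(\ko_X)=0$ is indispensable: dropping it, the sequence $\{\ko_X(i)\}$ is no longer directed and the rescaling step fails.
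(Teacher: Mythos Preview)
Your high-level reduction is exactly what the paper does: first invoke \autoref{cor:geocomp} and \autoref{rmk:coh} to obtain a quasi-functor between any two enhancements, and then upgrade the resulting equivalence to an isomorphism of functors. The difference is in how that upgrade is carried out.

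The paper identifies the two tools explicitly as \emph{ample sequences} and \emph{convolutions} (Postnikov systems), following the procedure initiated by Orlov. The role of the hypothesis $T_0(\ko_X)=0$ is not the ``directedness'' of the graded Hom-algebra on the sequence $\{\ko_X(i)\}$, but rather that this sequence is an ample sequence in $\Coh(X)$ in the sense recalled just after the theorem --- in particular condition~(3), namely $\Hom_{\Coh(X)}(B,\ko_X(i))=0$ for $i\le N(B)$ and \emph{arbitrary} $B\in\Coh(X)$, not merely $B$ in the sequence. Your vanishing claim $H^0(X,\ko_X(m))=0$ for all $m<0$ captures only the special case $B=\ko_X(j)$, and the argument you sketch for it is not given; even if true, it is not the form in which the hypothesis is used.

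More substantively, your propagation step (``every object is built from the $\ko_X(i)$ by cones, shifts and summands, and the extension is carried out at the level of dg modules exactly as in \autoref{thm:crit2}'') conflates two different kinds of extension. In \autoref{thm:crit2} one extends a \emph{functor} from generators to the whole category via induction/localization; here one must extend a \emph{natural isomorphism} between two exact functors along resolutions, and this is precisely what the convolution machinery is for. The paper singles out convolutions for this reason and refers to \cite{CS3} for the details; your proposal never mentions them. The ``rescale isomorphisms so the automorphism of the directed dg algebra becomes the identity'' step may be a valid alternative viewpoint, but it is not how the paper (or \cite{LO}) presents the argument, and as written it remains a black box appealing to \cite{LO} rather than an independent mechanism.
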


In a sense, \autoref{thm:LOstrong} can be deduced from \autoref{cor:geocomp} (and \autoref{rmk:coh}) using two main additional ingredients: ample sequences and convolutions. Indeed, assume that $X$ is as in \autoref{thm:LOstrong} and that we are given a pretriangulated dg category $\cC$ and an exact equivalence
\[
\fF\colon\Dp(X)\to\Ho(\cC).
\]
\autoref{cor:geocomp} provides an isomorphism $f\in\Hqe(\Perf(X),\cC)$, for any enhancement $\Perf(X)$ of $\Dp(X)$. How do we show that, roughly speaking, there is an isomorphism of exact functors $\fF\iso\Ho(f)$? As we mentioned, this is achieved by a sort of standard procedure initiated in \cite{Or} which uses ample sequences and convolutions.

These two techniques are reviewed in some detail in Sections 5.2.2 and 5.2.3 of \cite{CS3}. The expert reader can also guess from \cite{CS3} how they are used to carry out the proof of \autoref{thm:LOstrong}. Here we just recall that, given an abelian category
$\cA$ with finite dimensional Hom-spaces, a subset
$\{P_i\}_{i\in\ZZ}\subset\cA$ is an \emph{ample sequence}
if, for any $B\in\cA$, there exists an integer $N(B)$ such
that, for any $i\leq N(B)$,
\begin{enumerate}
\item\label{ample1} The natural morphism $\cA(P_i,B)
\otimes P_i\to B$ is surjective;

\item\label{ample2} If $j\ne0$ then
$\Db(\cA)(P_i,B[j])=0$;

\item\label{ample3} $\cA(B,P_i)=0$.
\end{enumerate}

If $X$ is a smooth projective scheme of positive dimension with an ample line bundle $L$, then $\{L^{\otimes n}:n\in\ZZ\}$ is an ample sequence in $\Coh(X)$. If $X$ is singular, this is still true if we make the further technical assumption that the maximal $0$-dimensional torsion subsheaf $T_0(\ko_X)$ of $\ko_X$ is trivial.

\begin{remark}\label{rmk:twist}
(i) There is another situation where strong uniqueness can be proven. Indeed, assume that $X$ is a smooth projective scheme over a field $\K$ and $\alpha$ is an element in the Brauer group $\Br(X)$ of $X$. Then, by \cite[Lemma 2.3]{CS5}, the abelian category $\Coh(X,\alpha)$ has an ample sequence. Then the same argument as in the proof of \autoref{thm:LOstrong} shows that $\Db(X,\alpha)$ has a strongly unique enhancement.

(ii) In \cite[Theorem 1.2]{CS}, we considered the category $\mathbf{Perf}_{Z}(X)$ consisting of perfect complexes on a quasi-projective scheme $X$ with topological support on a projective subscheme $Z$. More specifically, we proved that if $\ko_{iZ}\in\Dp(X)$, for all $i>0$ and $T_0(\ko_Z)= 0$, then $\mathbf{Perf}_Z(X)$ has a strongly unique enhancement. We let the interested reader consult \cite{CS3,CS} for the correct definition of the triangulated category $\mathbf{Perf}_{Z}(X)$.
\end{remark}

\section{Bad news again: lifting functors}\label{sect:functors}

In this section we give a quick update of \cite{CS3}. In particular, we explain some recent results showing that the existence (and uniqueness) of dg lifts of exact functors in geometric contexts is not always available.

\subsection{The triangulated case}\label{subsect:funtria}

Following \cite{CS3}, let us remind the reader of a very simple way to summarize the complete picture concerning Fourier--Mukai functors between `geometric' categories.

To be more precise, if $X_1$ and $X_2$ are smooth projective schemes over a field $\K$, we consider the exact functor
\begin{equation*}\label{eqn:FM}
\FM{E}(-):=\rd(p_2)_*(E\lotimes p_1^*(-))\colon\Db(X_1)\to\Db(X_2),
\end{equation*}
where $p_i\colon X_1\times X_2\to X_i$ is the natural projection and $E\in\Db(X_1\times X_2)$.

\begin{definition}\label{def:FM}
An exact functor $\fun{F}\colon\Db(X_1)\to\Db(X_2)$ is a \emph{Fourier--Mukai functor} (or of \emph{Fourier--Mukai type}) if there exists $E\in\Db(X_1\times X_2)$ and an isomorphism of exact functors $\fun{F}\iso\FM{E}$. The object $E$ is called \emph{Fourier--Mukai kernel}.
\end{definition}

\begin{remark}\label{rmk}
The relevance of these functors has been explained in \cite{CS3} (see also \cite{H} for a systematic analysis of their main geometric applications).

Moreover, it was conjectured by Kawamata \cite[Conjecture 1.5]{Ka1} that, given a smooth projective variety $X$ over a field, there are, up to isomorphism, only a finite number of smooth projective varieties $Y$ with an exact equivalence $\Db(X)\iso\Db(Y)$. Later, in \cite{AT}, it was shown that, again up to isomorphism, the smooth projective varieties $Y$ as above are at most countably many. In \cite{L}, the author shows a countable family of non-isomorphic threefolds with equivalent derived categories. This falsifies Kawamata's conjecture and updates our discussion in \cite[Section 2.2]{CS3}.
\end{remark}

Here we stick to some more foundational questions related to the subject of this survey.
Indeed, it makes sense to consider the category
$\ExFun(\Db(X_1),\Db(X_2))$ of exact functors
between $\Db(X_1)$ and $\Db(X_2)$ (with morphisms the natural
transformations compatible with shifts) and to define the functor
\begin{equation}\label{eqn:fun}
\FM[X_1\to X_2]{\farg}\colon\Db(X_1\times X_2)\lto\ExFun(\Db(X_1),\Db(X_2)).
\end{equation}
It sends $E\in\Db(X_1\times X_2)$ to the Fourier--Mukai functor $\FM{E}$.

The behavior of this functor is as bad as possible. To be more precise, in \cite{CS1}  we proved the following:
\begin{itemize}
\medskip
\item[(a)] $\FM[X_1\to X_2]{\farg}$ is not essentially injective (\cite[Theorem 1.1]{CS1}). In particular, for any elliptic curve $E$ over an algebraically closed
field there are non-isomorphic objects $E_1,E_2\in\Db(E\times E)$ such that $\FM{E_1}\iso\FM{E_2}$.
\item[(b)] $\FM[X_1\to X_2]{\farg}$ is neither full nor faithful (\cite[Proposition 2.3]{CS1}).
\medskip
\item[(c)] $\ExFun(\Db(X_1),\Db(X_2))$ does not have a triangulated structure making $\FM[X_1\to X_2]{\farg}$ exact (\cite[Corollary 2.7]{CS1}).
\end{itemize}

\begin{remark}\label{rmk:uniqcoh}
A partial repair to (a) is provided in \cite[Theorem 1.2]{CS1} where it is proved that the cohomology sheaves of a Fourier--Mukai functor are uniquely determined, up to isomorphism. More precisely, let $X_1$ and $X_2$ be projective schemes defined over a field and let $E_1,E_2\in\Db(X_1\times X_2)$ be such that they define exact functors $\FM{E_i}\colon\Dp(X_1)\to\Db(X_2)$, for $i=1,2$. If there is an isomorphism of exact functors $\FM{E_1}\iso\FM{E_2}$, then $H^j(E_1)\iso H^j(E_2)$, for all $j\in\ZZ$.
\end{remark}

\subsection{The counterexamples: non-liftable exact functors}\label{subsect:funcounter}

The most interesting question concerns the essential surjectivity of $\FM[X_1\to X_2]{\farg}$. This has been investigated during the last years and it has been recently clarified that the answer has to be negative. We discuss here the only two known examples of exact functors between the derived categories of smooth projective schemes which are not of Fourier--Mukai type.

\subsubsection*{Quadrics in $\PP^4$ (Rizzardo and Van den Bergh, \cite{RvdB})}\label{ssubsection:RvdB}

If $\K$ is an algebraically closed field of characteristic $0$ and $X$ is a smooth quadric in $Y=\PP^4$, then Rizzardo and Van den Bergh proved that there exists an exact functor $\Db(X)\to\Db(Y)$ which is not of Fourier--Mukai type. Unfortunately, not only the proof of this result, but even the definition of the functor is rather involved, as it uses sophisticated techniques from deformation theory. So we will limit ourselves to give a rough idea of their construction, inviting the interested reader to consult directly \cite{RvdB} for more details.

Consider more generally a closed immersion $f\colon X\to Y$ between smooth projective varieties and choose a finite affine open cover $\{V_1,\dots,V_r\}$ of $Y$. Denoting by $\I$ the set of non-empty subsets of $\{1,\dots,r\}$, we define as usual $V_I:=\cap_{i\in I}V_i$ for every $I\in\I$. To this cover one can associate a ($\K$-linear) category $\Y$ with the same objects as $\I$ and such that $\Y(I,J)$ (for $I,J\in\I$) is $\so_Y(V_J)$ if $I\subseteq J$ and $0$ otherwise. It is easy to see that there is a natural exact and fully faithful functor $\epsilon^*\colon\Qcoh(Y)\to\Mod{\Y\opp}$, which sends $E\in\Qcoh(Y)$ to the functor which associates to any $I\in\I$ the sections $E(V_I)$. This induces an exact equivalence between $\D(\Qcoh(Y))$ and $\Dq(\Y)$. Here $\Dq(\Y)$ is the full subcategory of $\D(\Mod{\Y\opp})$ with objects the complexes whose cohomologies are in the essential image of $\epsilon^*$. Similarly, to the induced (finite affine) cover $\{f^{-1}(V_1),\dots,f^{-1}(V_r)\}$ of $X$ one can associate a category $\X$, and $f$ naturally yields a ($\K$-linear) functor, again denoted by $f\colon\Y\to\X$, by abuse of notation. Indeed, $f$ is the identity on objects, whereas on morphisms it is given by the structure maps $f^{\#}(V_J)\colon\so_Y(V_J)\to(f_*\so_X)(V_J)\iso\so_X(f^{-1}(V_J))$.

Now, given a line bundle $M$ on $X$ and $\eta\in\HH^n(X,M):=\Ext^n_{X\times X}(\diag_*\so_X,\diag_*M)$, they introduce an $A_{\infty}$-deformation of $\X$, given by an $A_{\infty}$-category $\X_{\eta}$ together with an $A_{\infty}$-functor $\X_{\eta}\to\X$. It has the property that there exists an $A_{\infty}$-functor $\tilde{f}\colon\Y\to\X_{\eta}$ such that the diagram
\[
\xymatrix{
\X_{\eta} \ar[dr] & & \Y \ar[ll]_-{\tilde{f}} \ar[dl]^-{f}\\
 & \X
}
\]
commutes, up to isomorphism, if $f_*(\eta)=0\in\HH^n(Y,f_*(M))$. Moreover, under the additional assumption $n\ge\dim(X)+3$, there is a suitable exact functor $\fL\colon\Db(\Qcoh(X))\to\Dq(\X_{\eta})$ (where the latter category can be defined in analogy with $\Dq(\Y)$ above). Actually the proof of the existence of $\fL$, which is obtained as an extension of a natural functor defined on $\Inj(\Qcoh(X))$, is the more technical part of the paper. Finally, they show that the composition
\[
\Db(\Qcoh(X))\mor{\fL}\Dq(\X_{\eta})\mor{\tilde{f}_*}\Dq(\Y)\iso\D(\Qcoh(Y))
\]
restricts to the desired exact functor not of Fourier--Mukai type $\Db(X)\to\Db(Y)$ when $X$ is a smooth quadric in $Y=\PP^4$, $M=\omega_X^{\otimes2}$ and $0\ne\eta\in\HH^6(X,\omega_X^{\otimes2})\iso\K$ (in which case the condition $f_*(\eta)=0$ is satisfied).

\subsubsection*{Flag varieties (Vologodsky, \cite{V})}\label{ssubsection:V}

Quite recently, Vologodsky proposed a different (and, in a sense, simpler) example of an exact functor which is not of Fourier--Mukai type. His example is completely of geometric nature and very natural. Thus, it puts a bit in the shade the general (too) optimistic belief that even though $\FM[X_1\to X_2]{\farg}$ is not essentially surjective, algebraic geometers are safe: all exact functors appearing in their life are of Fourier--Mukai type.

For an integer $n>2$ and a prime $p$, consider the general linear group $\GL_n(\ZZ_p)$ over the $p$-adic numbers and take $B$ to be a Borel subgroup of $\GL_n(\ZZ_p)$. The quotient $\Fl_n:=\GL_n(\ZZ_p)/B$ is a flag variety over $\ZZ_p$. Set $Y:=\Fl_n\times_{\spec(\ZZ_p)} \Fl_n$.

Now, let $\K=\mathbb{F}_p$ and let $X:=Y\times_{\spec(\ZZ_p)}\spec(\K)$. In this situation, we have a natural closed embedding $\iota\colon X\mono Y$
and a $\K$-linear exact functor $\fG\colon\Db(X)\to\Db(X)$ defined as
\[
\fG:=\ld\iota^*\comp\iota_*.
\]
The claim is that $\fG$ is not of Fourier--Mukai type if regarded as a $\K$-linear functor. Notice that, on the other hand, it is of Fourier--Mukai type as a $\ZZ_p$-linear functor.

\subsubsection*{Life after the counterexamples}\label{ssubsection:after}

In view of the negative answer to the possibility of describing all exact functors between $\Db(X_1)$ and $\Db(X_2)$ as Fourier--Mukai functors, it becomes an interesting and challenging problem to characterize all exact functors which are of Fourier--Mukai type.

There has been a lot of work in this direction in recent years showing that interesting classes of exact functors are of this form. More precisely, a (non-exhaustive) list of results is as follows (a survey of these results is already contained in \cite{CS3}):
\begin{itemize}
\smallskip
\item[(1)] {\it Smooth projective schemes:} In \cite{Or}, it was proved that all fully faithful exact functors between $\Db(X_1)$ and $\Db(X_2)$, with $X_1$ and $X_2$ smooth projective schemes over a field $\K$, are of Fourier--Mukai type with unique (up to isomorphism) Fourier--Mukai kernel. The result has been generalized to the twisted setting under milder assumptions on the functor in \cite{CS5}. A generalization of Orlov's result to smooth stacks, which are obtained as global quotients, is proved in \cite{Ka}.
\smallskip
\item[(2)] {\it The singular case:} In \cite{LO}, the case of projective (non-necessarily smooth) schemes is treated. Among other things, the authors show that all fully faithful exact functors between $\Dp(X_1)$ and $\Dp(X_2)$, with $X_i$ projective over a field and such that the maximal $0$-dimensional torsion subsheaf $T_0(\ko_{X_1})$ of $\ko_{X_1}$ is trivial, are of Fourier--Mukai type. The uniqueness of the Fourier--Mukai kernel is proved in \cite[Remark 5.7]{CS}. One of the major contributions of \cite{LO} consists in showing that a fruitful approach involves the use of dg lifts.
\smallskip
\item[(3)] {\it The supported case:} In \cite{CS}, the techniques introduced in \cite{LO} are enforced and extended to deal with exact functors (with some special assumptions) between the categories of perfect complexes with cohomologies supported on projective schemes. 
\end{itemize}

\begin{remark}\label{rmk:students}
(i) All the results in (1)--(3) use in a key way the notion of ample sequence (see \autoref{subsect:strongly} for a quick discussion about this). Inevitably, this implies that, using these techniques, one can hope to work only with projective schemes. A different approach may consist in using \emph{indecomposable objects} rather than ample sequences. An object $C$ in a triangulated category $\cT$ is indecomposable if it is not isomorphic to a direct sum $C_1\oplus C_2$ with $C_1$ and $C_2$ non-trivial. This was successfully pursued in \cite{AM} for the special case of fully faithful functors $\fF\colon\Dp(X)\to\Db(Y)$, where $X=\spec\left(\K[\ep]\right)$ (as in \autoref{nonuniquot}) and $Y$ is a noetherian separated scheme defined over a field $\K$. There is a chance that this viewpoint can be fruitful in other geometric situations involving schemes which are not projective. 

(ii) Understanding the uniqueness of Fourier--Mukai kernels is an interesting problem in itself. In \cite{Ge} it is suggested that one can study it by lifting exact functors to $A_\infty$-functors.
\end{remark}

\subsection{The dg case}\label{subsect:fundg}

The bad picture described in the previous sections should be compared to the idyllic situation we encounter when we move to the dg context.

In principle, there are two different contexts where one can try to define an appropriate notion of dg Fourier--Mukai functor. The first one consists in taking the category $\dgCat$ of small dg categories which are $\K$-linear, for a commutative ring $\K$. Having in mind the idea of comparing the dg and triangulated worlds it is rather clear that this is not the right perspective to take. Indeed, in this process, we need to invert quasi-equivalences and thus we are forced to work with the localization $\Hqe$ rather than with $\dgCat$.

The second context, where all these problems are overcome, is $\Hqe$. This is what we are going to investigate in the rest of this section. The main result in this sense is the following one which is due to To\"en.

\begin{thm}[\cite{To}]\label{thm:Toen}
Let $\cC_1$, $\cC_2$ and $\cC_3$ be three dg categories over a commutative ring $\K$. Then there exists a natural bijection
\begin{equation}\label{eqn:firstpart}
\xymatrix{
\Hqe(\cC_1,\cC_2)\ar@{<->}[rr]^-{1:1}&&\Iso(\Ho(\rqr{\cC_1\opp\lotimes\cC_2})).
}
\end{equation}
Moreover, the dg category $\IHom(\cC_3,\cC_2):=\rqr{\cC_3\opp\lotimes\cC_2}$ yields a natural bijection
\begin{equation*}\label{eqn:ultima}
\xymatrix{
\Hqe(\cC_1\lotimes\cC_3,\cC_2)\ar@{<->}[rr]^-{1:1}&&\Hqe(\cC_1,\IHom(\cC_3,\cC_2))
}
\end{equation*}
proving that the symmetric monoidal structure on $\Hqe$ is closed.
\end{thm}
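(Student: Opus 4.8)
\textbf{Proof proposal (following \cite{To}).} The plan is to establish the identification \eqref{eqn:firstpart} first and then to deduce the internal adjunction from it. Throughout I would work with Tabuada's model structure on $\dgCat$ (see \autoref{rmk:modelcat}), for which $\Hqe$ is the homotopy category, every object is fibrant, $\K$ is the cofibrant monoidal unit, and the derived tensor $\lotimes$ is computed by cofibrantly replacing one of the two factors. Two elementary facts glue the picture together: $\K\lotimes\cC\iso\cC$ in $\Hqe$, and $\Hqe(\K,\cD)\iso\Iso(\Ho(\cD))$ for every dg category $\cD$ --- a dg functor out of $\K$ just selects an object of $\cD$, and homotopies between such functors (computed via the ``interval'' cylinder of $\K$) collapse to isomorphisms in $\Ho(\cD)$. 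In particular \eqref{eqn:firstpart} and the adjunction are two faces of one statement, since specializing the adjunction to $\cC_1=\K$ returns \eqref{eqn:firstpart}; nonetheless I will prove \eqref{eqn:firstpart} directly and bootstrap.

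\emph{Step 1: the case of cofibrant $\cC_1$.} Here $\Hqe(\cC_1,\cC_2)$ is the set $[\cC_1,\cC_2]$ of dg functors $\cC_1\to\cC_2$ modulo homotopy. To $\fF\colon\cC_1\to\cC_2$ I would attach its graph bimodule $E_\fF\in\dgMod{\cC_1\opp\otimes\cC_2}$, $E_\fF(\farg,A):=\cC_2(\farg,\fF(A))$; this is h-projective (where cofibrancy of $\cC_1$ enters) and right quasi-representable, since $\dfun{E_\fF}(A)$ is the representable $\cC_2$-module at $\fF(A)$. As cofibrant dg categories are flat, $\cC_1\opp\otimes\cC_2$ already computes $\cC_1\opp\lotimes\cC_2$, so $\fF\mapsto E_\fF$ descends to a map $[\cC_1,\cC_2]\to\Iso(\Ho(\rqr{\cC_1\opp\lotimes\cC_2}))$. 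That this map is bijective is the heart of the matter: it amounts to a \emph{rectification} statement, namely that a right quasi-representable h-projective bimodule --- equivalently, a dg functor $\cC_1\to\essim{\cC_2}$ with h-projective associated bimodule --- can be replaced, up to quasi-isomorphism and uniquely up to homotopy, by the graph of an honest dg functor $\cC_1\to\cC_2$. This uses that the Yoneda functor $\cC_2\to\essim{\cC_2}$ is a quasi-equivalence of fibrant dg categories together with the cofibrancy (hence left lifting property) of $\cC_1$. I expect this rectification to be the main obstacle, together with the care required, over a general commutative ring $\K$, to secure the flatness of cofibrant dg categories.

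\emph{Step 2: removing the cofibrancy hypothesis.} Both sides of \eqref{eqn:firstpart} are invariant under quasi-equivalences in $\cC_1$: the left-hand side tautologically, and the right-hand side because a quasi-equivalence $\cC_1'\to\cC_1$ induces a quasi-equivalence of derived tensor products and hence, via $\Ind$ and restriction to right quasi-representable objects, a quasi-equivalence $\rqr{(\cC_1')\opp\lotimes\cC_2}\to\rqr{\cC_1\opp\lotimes\cC_2}$. Replacing $\cC_1$ by a cofibrant resolution and invoking Step 1 proves \eqref{eqn:firstpart} for arbitrary $\cC_1$, and naturality follows from that of $\fF\mapsto E_\fF$.

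\emph{Step 3: the internal adjunction.} Set $\IHom(\cC_3,\cC_2):=\rqr{\cC_3\opp\lotimes\cC_2}$. Applying \eqref{eqn:firstpart} to both of its sides reduces the desired bijection $\Hqe(\cC_1\lotimes\cC_3,\cC_2)\iso\Hqe(\cC_1,\IHom(\cC_3,\cC_2))$ to a ``currying'' equivalence between quasi-functors $\cC_1\lotimes\cC_3\to\cC_2$ and quasi-functors $\cC_1\to\rqr{\cC_3\opp\lotimes\cC_2}$. With $\cC_1$ and $\cC_3$ cofibrant, the underived isomorphism $\dgMod{\cC_1\opp\otimes\cC_3\opp\otimes\cC_2}\iso\dgFun(\cC_1,\dgMod{\cC_3\opp\otimes\cC_2})$ coming from \eqref{dgadj} restricts, on h-projective right quasi-representable objects, to an equivalence compatible with the homotopy relations, and passing to homotopy categories identifies the two sides; Step 2 then drops the cofibrancy hypotheses. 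Keeping track of naturality in all three variables produces the asserted natural bijection, and since it exhibits $\IHom(\cC_3,\cC_2)$ as an internal Hom, the symmetric monoidal structure on $\Hqe$ is closed. Besides the rectification of Step 1, the delicate part I would anticipate is the bookkeeping here: controlling cofibrant replacements across the iterated tensor products and checking that the currying map is natural, not merely bijective.
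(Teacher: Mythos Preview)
Your proposal is a sound outline of To\"en's original model-categorical argument. The paper does not actually prove this theorem --- it is quoted from \cite{To} --- but immediately after the statement it sketches the bijection \eqref{eqn:firstpart} following the more elementary reproof of \cite{CS4}, and that sketch is organized differently from yours.

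You work from dg functors towards bimodules: take $\cC_1$ cofibrant, send a dg functor $\fF$ to its graph bimodule $E_\fF$, and establish bijectivity by a rectification argument that lifts a right quasi-representable bimodule back to an honest dg functor via the cofibrancy of $\cC_1$ against the quasi-equivalence $\cC_2\to\essim{\cC_2}$. The paper instead runs a chain of maps entirely at the level of $\Hqe$, with no explicit cofibrant replacement: first $\Hqe(\cC_1,\cC_2)\iso\Hqe(\cC_1,\essim{\cC_2})$ because Yoneda is a quasi-equivalence; then the inclusion $\essim{\cC_2}\mono\hproj{\cC_2}$ gives an injection into $\Hqe(\cC_1,\hproj{\cC_2})$; then a direct bijection $\Hqe(\cC_1,\hproj{\cC_2})\iso\Iso(\Ho(\hproj{\cC_1\opp\otimes\cC_2}))$ sending $E\mapsto\dfun{E}$; and finally one identifies the image of the injection with the right quasi-representable locus. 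In effect the ``rectification'' you isolate is absorbed into the first bijection and the image identification, with the lifting property never invoked by name. Your route is closer to \cite{To} and makes the passage to Step~3 (the internal-Hom adjunction via currying through \eqref{dgadj}) more systematic; the paper's route is lighter on model-category machinery, but it only illustrates \eqref{eqn:firstpart} and defers the adjunction entirely to the references.
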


The result has been reproved in a rather elementary way in \cite{CS4}. Here we take this perspective to illustrate how the bijection \eqref{eqn:firstpart} is defined. To see this, consider first the bijection
\begin{equation*}\label{eqn:intHoms1}
	\xymatrix{
	\Hqe(\cC_1,\cC_2)\ar@{<->}[r]^-{1:1}&
	\Hqe(\cC_1,\essim{\cC_2}).
	}
\end{equation*}
The inclusion $\essim{\cC_2}\mono\hproj{\cC_2}$ induces a natural injection
\begin{equation}\label{eqn:intHoms2}
	\xymatrix{	
	\Hqe(\cC_1,\essim{\cC_2})\ar@{^{(}->}[r]&
	\Hqe(\cC_1,\hproj{\cC_2}).
	}
\end{equation}
To conclude, observe that there is a natural bijection
\begin{equation}\label{eqn:intHoms3}
	\xymatrix{
\Iso(\Ho(\hproj{\cC_1\opp\otimes\cC_2}))\ar@{<->}[r]^-{1:1}&
	\Hqe(\cC_1,\hproj{\cC_2})
	}
\end{equation}
which, roughly speaking sends the isomorphism class of an h-projective bimodule $E$ to the corresponding quasi-functor $\dfun{E}$. Observe that, in principle, the tensor product in \eqref{eqn:intHoms3} should be derived. But this is just a minor technical problem which, without loss of generality, we can ignore in this presentation.

It is not difficult to see that the image of the injection \eqref{eqn:intHoms2} consists of
all the morphisms $f\in\Hqe(\cC_1,\hproj{\cC_2})$ with the property that
$\im(\Ho(f))\subseteq\Ho(\essim{\cC_2})$. Hence, \eqref{eqn:intHoms3} provides a bijection
between the image of \eqref{eqn:intHoms2} and the set of isomorphism classes of the objects
$E\in\Ho(\hproj{\cC_1\opp\otimes\cC_2})$ such that
$\Ho(\dfun{E})\colon\Ho(\cC_1)\to\Ho(\hproj{\cC_2})$ factors through $\Ho(\essim{\cC_2})$. Thus we get \eqref{eqn:firstpart}.

\begin{remark}\label{rmk:dgFM}
The choice of the notation $\dfun{E}$, for an h-projective dg module $E\in\hproj{\cC_1\opp\otimes\cC_2}$ clearly suggests that the bijections \eqref{eqn:firstpart} should be thought of as the correct way to define a dg version of the notion of Fourier--Mukai functor discussed above. Indeed, it shows that we can naturally associate a(n isomorphism class of a) bimodule to a morphism in $\Hqe$. This should also be compared to the standard Morita theory which shows that any colimit preserving functor between the categories of modules over associative algebras (over a field $\K$) is realized by a bimodule.

Hence, \autoref{thm:Toen} shows that the analogue of \eqref{eqn:fun} at the level of objects exists and, contrary to the triangulated case, the natural bijection has all possible nice properties.
\end{remark}


Let us now move to the geometric reinterpretation of \autoref{thm:Toen} and, for simplicity, let us assume for the rest of this section that $X_1$ and $X_2$ are smooth projective schemes over a field $\K$. Let $\fF\colon\Db(X_1)\to\Db(X_2)$ be an exact functor and let $(\cC_i,\fE_i)$ be an enhancement of $\Db(X_i)$, for $i=1,2$.

\begin{definition}\label{def:dglift}
A morphism $f\in\Hqe(\cC_1,\cC_2)$ is a \emph{dg lift} (or simply a \emph{lift}) of $\fF$ if there is an isomorphism of exact functors $\fF\iso\fE_2\comp\Ho(f)\comp\fE_1^{-1}$.
\end{definition}

\begin{remark}\label{rmk:dglift}
It is quite easy to see that, in view of the strong uniqueness of the enhancements of $\Db(X_1)$ and $\Db(X_2)$ (see \autoref{thm:LOstrong}), the existence and uniqueness of a lift of $\fF$ do not depend on the choice of the enhancements of the triangulated categories. 
\end{remark}

So, in the above situation, denote by $(\Perf(X_i),\fE_i)$ any dg enhancement of $\Db(X_i)=\Dp(X_i)$.
We then get the following result (originally proved under milder assumptions).

\begin{thm}[\cite{To}, Theorem 8.15]\label{thm:Toen2}
If $X_1$ and $X_2$ are as above, then there is an isomorphism
\begin{equation}\label{eqn:isogeo}
\IHom(\Perf(X_1),\Perf(X_2))\to\Perf(X_1\times X_2)
\end{equation}
in $\Hqe$.
\end{thm}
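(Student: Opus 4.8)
The plan is to exhibit $\IHom(\Perf(X_1),\Perf(X_2))$ as an enhancement of $\Db(X_1\times X_2)$ and then invoke the uniqueness of such an enhancement (\autoref{cor:geocomp}); in this strategy \autoref{thm:Toen} is the essential input. Since $X_1$ is smooth and projective, the dg category $\Perf(X_1)$ is smooth and proper, and a basic feature of To\"en's theory for smooth and proper dg categories $\cC_1$ is that the right quasi-representable dg modules over $\cC_1\opp\lotimes\cC_2$ are precisely the perfect ones (for any $\cC_2$); hence
\[
\IHom(\Perf(X_1),\Perf(X_2))=\rqr{\Perf(X_1)\opp\lotimes\Perf(X_2)}=\Pe{\Perf(X_1)\opp\lotimes\Perf(X_2)}.
\]
In particular this dg category is pretriangulated and idempotent complete, and its homotopy category is the triangulated category $\dgD(\Perf(X_1)\opp\lotimes\Perf(X_2))^c$ of compact objects in the derived category of $\Perf(X_1)\opp\lotimes\Perf(X_2)$.

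The remaining task is to identify this triangulated category with $\Db(X_1\times X_2)$, which I would do via derived Morita theory. As $X_i$ is smooth and projective, $\D(\Qcoh(X_i))$ admits a single compact generator $G_i$; setting $B_{X_i}:=\RHom_{X_i}(G_i,G_i)$ one has an exact equivalence $\dgD(B_{X_i})\iso\D(\Qcoh(X_i))$ carrying the compact objects onto $\D(\Qcoh(X_i))^c=\Dp(X_i)=\Db(X_i)$. In particular $\Perf(X_i)$ is isomorphic in $\Hqe$ to $\Pe{B_{X_i}}$, hence Morita equivalent to $B_{X_i}$; since the derived tensor product respects Morita equivalences, $\Perf(X_1)\opp\lotimes\Perf(X_2)$ is Morita equivalent to $B_{X_1}\opp\lotimes_\K B_{X_2}$, and therefore $\dgD(\Perf(X_1)\opp\lotimes\Perf(X_2))\iso\dgD(B_{X_1}\opp\lotimes_\K B_{X_2})$ compatibly with compact objects.

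Two further points finish the identification. First, $G_1\boxtimes G_2$ is a compact generator of $\D(\Qcoh(X_1\times X_2))$, and since all the schemes involved are flat over the field $\K$ the K\"unneth formula yields a quasi-isomorphism of dg algebras $\RHom_{X_1\times X_2}(G_1\boxtimes G_2,G_1\boxtimes G_2)\iso B_{X_1}\lotimes_\K B_{X_2}$, so that $\dgD(B_{X_1}\lotimes_\K B_{X_2})\iso\D(\Qcoh(X_1\times X_2))$ compatibly with compact objects. Second, derived duality $E\mapsto E^\vee$ is an exact anti-equivalence of $\Dp(X_1)$, so $\Perf(X_1)\opp$ is an enhancement of $\Dp(X_1)\opp\iso\Dp(X_1)$ and hence, by the uniqueness in \autoref{cor:geocomp}, $\Perf(X_1)\opp\iso\Perf(X_1)$ in $\Hqe$; consequently $B_{X_1}\opp$ is Morita equivalent to $B_{X_1}$ and $\dgD(B_{X_1}\opp\lotimes_\K B_{X_2})\iso\dgD(B_{X_1}\lotimes_\K B_{X_2})$. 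Composing the equivalences and passing to compact objects gives $\Ho(\IHom(\Perf(X_1),\Perf(X_2)))\iso\Dp(X_1\times X_2)=\Db(X_1\times X_2)$; thus $\IHom(\Perf(X_1),\Perf(X_2))$ is an enhancement of $\Db(X_1\times X_2)$, and by \autoref{cor:geocomp} it is isomorphic in $\Hqe$ to $\Perf(X_1\times X_2)$.

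The main obstacle is the very first step: that $\rqr{\cC_1\opp\lotimes\cC_2}=\Pe{\cC_1\opp\lotimes\cC_2}$ for $\cC_1$ smooth and proper is the substance of \autoref{thm:Toen} (reproved elementarily in \cite{CS4}), and it is here that the homological smoothness and properness of $\Perf(X)$ --- ultimately the finiteness of $X$ together with the existence of a single perfect generator of $\Db(X)$ whose derived endomorphism dg algebra is perfect over $\K$ --- enters in an essential way. The K\"unneth step and the bookkeeping of opposites and Morita equivalences are comparatively routine; the one additional thing to check is that the displayed equivalences are all induced by quasi-functors rather than merely abstract exact equivalences, but in the end the uniqueness of the enhancement of $\Db(X_1\times X_2)$ makes even this point superfluous.
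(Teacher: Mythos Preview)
The paper does not give a proof of this theorem: it quotes the result from To\"en and only remarks (\autoref{rmk:BZ}) that the argument relies on the Bondal--Van den Bergh results on compact generators. Your outline is in fact a faithful reconstruction of To\"en's original proof (compact generators $G_i$, Morita reduction to the endomorphism dg algebras $B_{X_i}$, K\"unneth identification $B_{X_1}\lotimes B_{X_2}\simeq B_{X_1\times X_2}$), so at the level of strategy you are aligned with the literature the paper is pointing to.

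Two corrections. First, the identity $\rqr{\cC_1\opp\lotimes\cC_2}=\Pe{\cC_1\opp\lotimes\cC_2}$ for $\cC_1$ smooth and proper is \emph{not} the content of \autoref{thm:Toen} as stated in the paper; that theorem is the general internal-Hom/closed monoidal statement and says nothing about smoothness or properness. The equality you need is a separate fact (in To\"en's paper it is the characterization of saturated dg categories), and it is precisely here that the finiteness of $X_1$ enters. Second, your appeal to \autoref{cor:geocomp} is logically permissible (there is no circularity: the uniqueness results of \autoref{sect:uniqcatcomp} do not use \autoref{thm:Toen2}), but it is also unnecessary. Every equivalence in your chain---the Morita passage from $\Perf(X_i)$ to $B_{X_i}$, the K\"unneth quasi-isomorphism, the duality $\Perf(X_1)\opp\simeq\Perf(X_1)$---is already available in $\Hqe$, so the isomorphism \eqref{eqn:isogeo} comes out directly, which is how To\"en obtains it (his paper predates the uniqueness results). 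Invoking uniqueness as a safety net is a pleasant shortcut, but you should be aware that it is not the route taken in \cite{To}, and that the argument stands without it.
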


\begin{remark}\label{rmk:BZ}
The proof of this fact involves the deep results in \cite{BB} about generation of derived categories of smooth projective schemes. This theorem was variously generalized in \cite{BZFN} and \cite{BZNP}, covering the more general setting of derived stacks.
\end{remark}

\autoref{thm:Toen2} and \autoref{thm:Toen} clarify even better the relation to the functor \eqref{eqn:fun} discussed in \autoref{rmk:dgFM}. Indeed, the morphisms in $\Hqe$ between any two enhancements of $\Db(X_1)$ and $\Db(X_2)$ are in natural bijection with the isomorphism classes of objects in $\Db(X_1\times X_2)$.
More precisely, consider the image $f\in\Hqe(\Perf(X_1),\Perf(X_2))$
of the object $E\in\Db(X_1\times X_2)$ under the bijection \eqref{eqn:firstpart} and the bijection induced by \eqref{eqn:isogeo}. It is then natural to ask whether $f$ is a dg lift of $\FM{E}$.
A positive answer is claimed in \cite{To} without a precise proof. A complete argument has been recently provided in \cite{LS}, under more general assumptions than the ones in this paper (see also \cite{Sn}). Putting altogether, we get the following result (see also Section 7 of \cite{BLL} for another proof of the `if' part).

\begin{prop}\label{prop:FMlift}
Let $X_1$ and $X_2$ be smooth projective schemes over a field $\K$ and let $\fF\colon\Db(X_1)\to\Db(X_2)$ be an exact functor. Then $\fF$ is of Fourier--Mukai type if and only if $\fF$ has a dg lift.
\end{prop}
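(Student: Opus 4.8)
The plan is to derive the statement from To\"en's computation of morphism sets in $\Hqe$ together with its geometric incarnation. First I would fix, for $i=1,2$, an enhancement $(\Perf(X_i),\fE_i)$ of $\Db(X_i)=\Dp(X_i)$; by the strong uniqueness of these enhancements (\autoref{thm:LOstrong}) and \autoref{rmk:dglift}, nothing in the statement — neither the existence of a lift nor the property of being of Fourier--Mukai type — is affected by this choice, so it suffices to argue for one convenient pair of enhancements. Next I would assemble the relevant bijection: combining the bijection \eqref{eqn:firstpart} of \autoref{thm:Toen} (applied to $\cC_1=\Perf(X_1)$, $\cC_2=\Perf(X_2)$), the definition $\IHom(\Perf(X_1),\Perf(X_2))=\rqr{\Perf(X_1)\opp\lotimes\Perf(X_2)}$, the isomorphism \eqref{eqn:isogeo} of \autoref{thm:Toen2}, and the enhancement equivalence $\Ho(\Perf(X_1\times X_2))\iso\Db(X_1\times X_2)$, one obtains a bijection
\[
\Phi\colon\Hqe(\Perf(X_1),\Perf(X_2))\isomor\Iso(\Db(X_1\times X_2)).
\]

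The heart of the matter — and the step I expect to be the main obstacle — is to show that $\Phi$ is compatible with the formation of Fourier--Mukai functors: for every $f\in\Hqe(\Perf(X_1),\Perf(X_2))$ there is an isomorphism of exact functors $\fE_2\comp\Ho(f)\comp\fE_1^{-1}\iso\FM{\Phi(f)}$; in other words, setting $E:=\Phi(f)$, the quasi-functor $f$ is a dg lift of $\FM{E}$ in the sense of \autoref{def:dglift}. Proving this requires tracing the abstract bijection \eqref{eqn:firstpart} — which attaches to $f$ the isomorphism class of a right quasi-representable h-projective dg module on $\Perf(X_1)\opp\lotimes\Perf(X_2)$ — through the geometric identification of $\IHom(\Perf(X_1),\Perf(X_2))$ with $\Perf(X_1\times X_2)$, and checking that at the level of homotopy categories the corresponding dg-module operations (restriction and induction along the structure dg functors, derived tensor with the kernel) compute exactly the derived pull-back $p_1^*$, the derived tensor with $E$, and the derived push-forward $\rd(p_2)_*$. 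This is precisely the computation carried out in \cite{LS} (see also \cite{Sn}); I would invoke it, noting that it rests on the Bondal--Van den Bergh generation theorem \cite{BB} underlying \autoref{thm:Toen2} and on Schn\"urer's dg resolution functors, which make it possible to lift the three exact functors to one and the same type of enhancement. For the single implication ``dg lift $\Rightarrow$ Fourier--Mukai type'' one could alternatively cite Section~7 of \cite{BLL}.

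Granting this compatibility, the proposition follows formally, and I would finish as follows. If $\fF$ admits a dg lift $f\in\Hqe(\Perf(X_1),\Perf(X_2))$, put $E:=\Phi(f)$; then $\fF\iso\fE_2\comp\Ho(f)\comp\fE_1^{-1}\iso\FM{E}$, so $\fF$ is of Fourier--Mukai type. Conversely, if $\fF\iso\FM{E}$ for some $E\in\Db(X_1\times X_2)$, put $f:=\Phi^{-1}(E)$; by the compatibility $f$ is a dg lift of $\FM{E}$, hence also of the isomorphic functor $\fF$. This yields both directions, and by \autoref{rmk:dglift} the conclusion is independent of the chosen enhancements, as claimed.
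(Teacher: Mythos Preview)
Your proposal is correct and follows essentially the same route as the paper: the proposition is stated after a discussion that assembles exactly the bijection $\Hqe(\Perf(X_1),\Perf(X_2))\isomor\Iso(\Db(X_1\times X_2))$ from \autoref{thm:Toen} and \autoref{thm:Toen2}, and then invokes \cite{LS} (and \cite{Sn}) for the compatibility $f\leftrightarrow\FM{E}$, with \cite[Section~7]{BLL} cited for the ``if'' direction. Your treatment is slightly more explicit in spelling out both implications and in pointing to the ingredients behind the \cite{LS} computation, but the structure and the references are the same.
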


\begin{remark}\label{rmK:funlift}
The discussion in \autoref{subsect:funtria} shows that, even when a dg lift of an exact functor exists, it need not be unique. The counterexamples in \autoref{subsect:funcounter} show that the lift may not exist.
\end{remark}


\bigskip

{\small\noindent{\bf Acknowledgements.} It is our great pleasure to thank Alice Rizzardo, Michel Van den Bergh and Vadim Vologodsky for answering all our questions about dg lifts of Fourier--Mukai functors. We are very grateful to Pieter Belmans and Valery Lunts for comments on a preliminary version of this paper. The anonymous referee suggested several useful improvements to the presentation. We warmly thank the organizers of the conference ``VBAC2015: Fourier-Mukai, 34 years on'' (Warwick, 2015) for the very exciting atmosphere. P.S.\ has benefited of many interesting conversations with Amnon Neeman and Olaf Schn\"{u}rer during the Hausdorff School ``Derived Categories: Dimensions, Stability Conditions, and Enhancements'' (Bonn, 2016).}


\end{document}